\DeclareSymbolFontAlphabet{\mathbb}{AMSb}
\DeclareSymbolFontAlphabet{\mathbbl}{bbold}
\theoremstyle{definition}
\newtheorem* {theorem*}{Theorem}
\newtheorem* {conjecture*}{Conjecture}
\newtheorem{theorem}{Theorem}[section]
\theoremstyle{definition}
\newtheorem* {example*}{Example}
\newtheorem{lemma}[theorem]{Lemma}
\theoremstyle{definition}
\newtheorem{definition}[theorem]{Definition}
\theoremstyle{definition}
\newtheorem{conjecture}[theorem]{Conjecture}
\newtheorem{proposition}[theorem]{Proposition}
\newtheorem{corollary}[theorem]{Corollary}
\newtheorem *{remark}{Remark}
\theoremstyle{definition}
\newtheorem {example}[theorem]{Example}
\theoremstyle{definition}
\theoremstyle{definition}
\theoremstyle{definition}
\def\wh{\widehat}
\def\({\left(}
\def\){\right)}
\newcommand{\cO}{\mathcal{O}}
\newcommand{\cZ}{\mathcal{Z}}
\def\NN{\mathbb{N}}
\def\ZZ{\mathbb{Z}}
\def\ch{\mathrm{ch}}
\def\spanning{\textnormal{-span}}
\def\barr{\begin{array}}
\def\earr{\end{array}}
\def\ba{\begin{aligned}}
\def\ea{\end{aligned}}
\def\be{\begin{equation}}
\def\ee{\end{equation}}
\def\qquand{\qquad\text{and}\qquad}
\def\quand{\quad\text{and}\quad}
\def\cH{\mathcal H}
\def\id{\mathrm{id}}
\def\PP{\mathbb{P}}
\def\ben{\begin{enumerate}}
\def\een{\end{enumerate}}
\newcommand{\Mon}[1]{\mathscr{F}(#1)}
\def\Des{\mathrm{Des}}
\def\c{\textbf{c}}
\newcommand{\Sp}{\textsf{Sp}}
\renewcommand{\dim}{\operatorname{dim}}
\renewcommand{\r}[1]{\textcolor{red}{#1}}
\def\L{\underline L}
\def\arcstart{\ \xy<0cm,-.15cm>\xymatrix@R=.1cm@C=.3cm }
\newcommand{\arcstartc}[1]{\ \xy<0cm,-.15cm>\xymatrix@R=.1cm@C=#1cm}
\def\t{\mathbf{t}}
\def\r{\mathbf{r}}
\def\sC{\mathscr{C}}
\def\Fock{\overline{\mathcal{K}}}
\def\F{\mathcal{F}}
\def\bfSigma{\mathbf{K}}
\def\sSigma{\mathbb{K}}
\def\kk{\mathbbl{k}}
\def\bfW{\textbf{W}}
\def\bfI{\textbf{I}}
\def\PR{\textsf{PR}}
\def\KPR{\textsf{KPR}}
\def\Words{\mathbb{W}}
\def\cG{\mathcal{G}}
\def\FB{\textsf{FB}}
\def\Mon{\textsf{Mon}}
\def\Comon{\textsf{Comon}}
\def\Bimon{\textsf{Bimon}}
\def\kVec{\textsf{Vec}_\kk}
\def\whkVec{\wh{\textsf{Vec}}_\kk}
\def\kGrVec{\textsf{GrVec}_\kk}
\def\L{\mathbb{L}}
\def\One{\mathbf{1}}
\def\antipode{{\tt S}}
\def\Bimon{\textsf{Bimon}}
\def\htimes{\mathbin{\hat\otimes}}
\newcommand{\Sym}{\textsf{Sym}}
\newcommand{\WQSym}{\textsf{WQSym}}
\newcommand{\FQSym}{\textsf{FQSym}}
\def\QSym{\textsf{QSym}}
\def\whQSym{\wh{\textsf{Q}}\textsf{Sym}}
\def\whSym{\wh{\textsf{S}}\textsf{ym}}
\def\Sym{\textsf{Sym}}
\def\NSym{\textsf{NSym}}
\def\whNSym{\wh{\textsf{N}}\textsf{Sym}}
\def\zetaq{\zeta_{\QSym}}
\def\r{{\tt r}}
\def\c{{\tt c}}
\def\t{{\tt t}}
\def\cV{\mathscr{V}}
\def\cU{\mathscr{U}}
\def\cH{\mathscr{H}}
\def\B{\mathscr{B}}
\def\ExtBim{\Mon(\Comon^\FB)}
\def\FDExtBim{\Mon({\Comon}^{\FB}_{\emph{fin-dim}})}
\def\P{\textsf{P}}
\def\Packed{\mathbb{W}_\P}
\def\bfP{{\textbf{W}}_\P}
\def\hP{\hat{\textbf{W}}_\P}
\def\Reduced{\textsf{R}}
\def\bfE{\mathcal{E}}
\def\bF{\mathscr{F}}
\def\bG{\mathscr{G}}
\def\sW{\mathscr{W}}
\def\sS{\mathscr{K}}
\def\XX{\mathbb{X}}
\def\cZ{\text{Z}}
\def\Peak{\mathrm{Peak}}
\def\Valley{\mathrm{Val}}
\def\OQSym{\cO\QSym}
\def\OSym{\cO\Sym}
\def\ch{\operatorname{char}}
\def\sort{\mathrm{sort}}
\def\osim{\mathbin{\overset{\circ}\sim}}
\def\rpk{\flat}
\def\mMR{\mathfrak{m}\text{MR}}
\def\Zetaq{\cZ_{\QSym}}
\def\EQSym{\bfE\whQSym}
\def\st{\operatorname{fl}}
\numberwithin{equation}{section}
\begin{document}
\title{Linear compactness and combinatorial bialgebras}
\author{
Eric Marberg
\\ Department of Mathematics \\  Hong Kong University of Science and Technology \\ {\tt eric.marberg@gmail.com}
}

\date{}

\maketitle

\begin{abstract}
We present an expository overview of the monoidal structures in the category of linearly compact vector spaces.
Bimonoids in this category are the
natural duals of infinite-dimensional bialgebras.
We classify the relations on words whose equivalence classes generate linearly compact bialgebras
under shifted shuffling and deconcatenation.
We also extend some of the theory of combinatorial Hopf algebras
to bialgebras that are not connected or of finite graded dimension.
Finally, we discuss several examples of quasi-symmetric functions, not necessarily of bounded degree,
that may be constructed via terminal properties of combinatorial bialgebras. 
\end{abstract}

\setcounter{tocdepth}{2}
\tableofcontents

\section{Introduction}

The graded dual $\bfW_\P$ of the Hopf algebra of \emph{word quasi-symmetric functions}
has a basis given by the set of \emph{packed words}, i.e.,
finite sequences $w=w_1w_2\cdots w_n$ with $\{w_1,w_2,\dots,w_n\} = \{1,2,\dots,m\}$ for some $m \geq 0$.
The product for this Hopf algebra is a shifted shuffling operation, while the coproduct is a variant of deconcatenation;
for the precise definitions, skip to Section~\ref{word-sect}.

A fruitful method of constructing Hopf algebras of interest in combinatorics is to 
choose 
an equivalence relation $\sim$ on packed words and then form
the subspace $\bfSigma_\P^{(\sim)} \subset \bfW_\P$
spanned by the sums over each $\sim$-equivalence class $\kappa_E := \sum_{w \in E} w$.
A long list of well-known Hopf algebras can be realized as a subalgebra of $\bfW_\P$ in this way:
for example,
the \emph{noncommutative symmetric functions} $\NSym$ \cite{GKLLRT}, 
the \emph{Poirier-Reutenauer algebra} $\PR$ \cite{PoirierReutenauer},
the \emph{$K$-theoretic Poirier-Reutenauer algebra} $\KPR$ \cite{PylPat},
the \emph{small multi-Malvenuto-Reutenauer Hopf algebra} $\mMR$ \cite{LamPyl},
the \emph{Loday-Ronco algebra} \cite{AguiarSottile2,LR1},
and the \emph{Baxter Hopf algebra} \cite{Giraudo}. Similar Hopf algebra constructions involving
equivalences on (signed) words and permutations have been explored in \cite{ChatelPilaud,Pilaud,PilaudPons,Reading},
among other places.

The subspace $\bfSigma_\P^{(\sim)} \subset \bfW_\P$ is not necessarily a sub-bialgebra,
 and one of the aims of this paper is to describe precisely when this occurs.
The Hopf algebra $\bfW_\P$ is a quotient of a larger bialgebra $\bfW$ with a basis given by 
arbitrary words.
We will also consider the problem of classifying the word relations 
 that span sub-bialgebras  $\bfSigma^{(\sim)}\subset \bfW$ in a similar manner.
 
 For homogeneous relations, 
 versions of
 these problems have been studied in a few places previously, e.g., \cite{Giraudo,Hiver07,NovelliThibon,Priez}.
 Less has been written about the cases when $\sim$ is allowed to relate words of different lengths.
For inhomogeneous relations of this kind,
various complications arise
when one tries to interpret 
  $\bfSigma_\P^{(\sim)}$  as an algebra or a coalgebra.
  To start, such relations may have equivalence classes with infinitely many elements,
  in which case $\bfSigma_\P^{(\sim)}$ contains infinite linear combinations of packed words
  so is not technically a subspace of $\bfW_\P$. One can still try to evaluate the product and coproduct 
  of $\bfW_\P$ on elements of $\bfSigma_\P^{(\sim)}$ when this happens. However, products may result in infinite linear combinations of 
  the basis elements $\kappa_E$, and even if these infinite sums are adjoined to $\bfSigma_\P^{(\sim)}$,  
  coproducts may have too many terms to belong to $\bfSigma_\P^{(\sim)}\otimes \bfSigma_\P^{(\sim)}$.
  
Nevertheless, some interesting ``Hopf algebras'' that can be identified with $\bfSigma_\P^{(\sim)}$
  when $\sim$ is inhomogeneous have appeared in the literature \cite{HKPWZZ,LamPyl,Pat,PylPat}.
  A secondary, expository goal of this paper is to describe explicitly the 
monoidal category containing such objects, which in general is not the usual category of bialgebras over a field.
This point is often glossed over in the relevant combinatorial literature, though authors 
tend to indicate correctly that its resolution is topological in nature.

In detail, to make sense of ``sub-bialgebras'' of $\bfW_\P$ ``spanned'' by inhomogeneous word relations,
one should first consider the larger vector space $\hat\bfW_\P$ consisting of arbitrary 
(rather than just finite) linear combinations of packed words.
This object is naturally viewed as a \emph{linearly compact} topological space.
The full subcategory of such spaces, within the category of all topological vector spaces,
has a symmetric monoidal structure which leads to notions of 
\emph{linearly compact algebras, coalgebras}, and \emph{bialgebras}, of which $\hat\bfW_\P$ is an example.
In this language, our original classification problem becomes the question:
for which word relations $\sim$ is the subspace 
$\hat\bfSigma_\P^{(\sim)}$,
whose elements are the arbitrary linear combinations of the sums $\kappa_E$,
a linearly compact sub-bialgebra of $\hat\bfW_\P$?
  
After some preliminaries in Section~\ref{prelim-sect}, we review the main properties
of
linearly compact vector spaces in Section~\ref{cont-sect}.
This background material is semi-classical but perhaps not so widely known
in combinatorics.
Section~\ref{species-sect}
goes on to discuss some novel generalizations of the monoidal structures on $\bfW$ and $\bfW_\P$.
  In Section~\ref{wr-sect}, we answer the question in the previous paragraph.
Our general results about word relations recover a number of 
specific constructions of (linearly compact) Hopf algebras and bialgebras;
  we discuss some relevant examples in Section~\ref{example-sect}.
  
  One application of all this formalism 
  is to extend Aguiar, Bergeron, and Sottile's theory of combinatorial Hopf algebras from \cite{ABS}.
  Ignoring some technical details which will be clarified in Section~\ref{cb-sect},
  a \emph{combinatorial Hopf algebra} over a field $\kk$
  is a Hopf algebra $H$ with an algebra morphism $\zeta : H \to \kk$ called the \emph{character}.
A morphism $(H,\zeta) \to (H',\zeta')$ of combinatorial Hopf algebras 
is a Hopf algebra morphism $\phi :H \to H'$ with $\zeta = \zeta'\circ \phi$.
The Hopf algebra of \emph{quasi-symmetric functions} $\QSym$
with the homomorphism $\zetaq : \QSym \to \kk$
setting $x_1=1$ and $x_2=x_3=\dots=0$
is a fundamental example.

It is shown in \cite{ABS} that if $(H,\zeta)$ is a combinatorial Hopf algebra
in which $H$ is (1) graded, (2) connected, and (3) of finite graded dimension,
then there is a unique morphism $(H,\zeta) \to (\QSym,\zetaq)$.
This morphism supplies a uniform construction of many independent definitions
of quasi-symmetric generating functions attached to Hopf algebras.
In Section~\ref{cb-sect}, we prove two extensions of this result. The first (see Theorem~\ref{abs-thm}) removes assumptions (2) and (3), essentially just
by reframing the character of $H$ as an algebra morphism $\zeta : H \to \kk[t]$.
The second (see Theorem~\ref{cont-abs-thm}) lifts all of the assumptions (1), (2), and (3), at the cost of introducing some topological conditions
and replacing $\QSym$ by an appropriate completion.

These results  are not unexpected;
the authors mention in \cite[Remark 4.2]{ABS}
that assumption (3) may be dropped, and 
note work in preparation where this will be proved.
The relevant paper cited in \cite[Remark 4.2]{ABS} does not seem to have ever appeared in the literature, however.
We hope that our exposition fills this gap.

In Section~\ref{comwor-sect} we illustrate some more applications.
We discuss several examples of families of symmetric and quasi-symmetric functions, not necessarily of bounded degree,
that can be realized as the images of canonical morphisms from what we call \emph{(linearly compact) combinatorial bialgebras}.
For appropriate word relations, the space $\hat\bfSigma_\P^{(\sim)}$ is an object of this type and is therefore equipped
with a canonical morphism 
to a certain linearly compact ``completion'' of $\QSym$.
Our last results give a partial classification of the relations $\sim$ for which the image of this morphism consists entirely of symmetric functions.

\subsection*{Acknowledgements}

I am grateful to Frederick Tsz-Ho Fong, Zachary Hamaker, Amy Pang, and Brendan Pawlowski
for useful discussions.
I also thank the anonymous referees for their exceedingly thorough comments,
and for suggesting Example~\ref{referee-ex} as well as improvements and corrections to several proofs.

\section{Preliminaries}\label{prelim-sect}

Let $\ZZ \supset \NN \supset \PP$ denote the respective sets of all integers, all nonnegative integers,
and all positive integers.
For $m,n \in \NN$, define $[m,n] = \{ i \in \ZZ : m \leq i \leq n\}$ and $[n] = [1,n]$.

\subsection{Monoidal structures}\label{monoidal-sect}

Our reference for the background material in this section is \cite[Chapter 1]{AguiarMahajan}.
Suppose $\sC$ is a braided monoidal category with tensor product $\bullet$, unit object $I$,
and braiding $\beta$. 

\begin{definition}
A \emph{monoid} in $\sC$ is a triple $(A,\nabla,\iota)$ where $A \in \sC$ is an object
and $\nabla : A \bullet A \to A$ and $\iota : I \to A$ are morphisms (referred to as the \emph{product} and \emph{unit}) making these diagrams commute:
\be\label{assoc0-eq}
{\footnotesize
\begin{diagram}[small]
I\bullet A & \rTo^{\ \ \iota\bullet\id\ \ } & A\bullet A & \lTo^{\ \ \id\bullet \iota\ \ } & A \bullet I \\
 & \rdTo^{\cong} & \dTo^{\nabla} &  \ldTo^{\cong}\\
 & & A 
 \end{diagram}
\qquad\qquad
\begin{diagram}[small]
 A\bullet A \bullet A & \rTo^{\ \ \nabla\bullet \id\ \ }& A \bullet A\\
\dTo^{\id \bullet \nabla}  && \dTo_{\nabla} \\
A \bullet A & \rTo^{\nabla} & A 
\end{diagram}
}
\ee
\end{definition}
\begin{definition}
A \emph{comonoid} in $\sC$ is a triple $(A,\Delta,\epsilon)$ where $A \in \sC$ is an object
and $\Delta : A \to A \bullet A$ and $\epsilon : A \to I$ are morphisms (referred to as the \emph{coproduct} and \emph{counit}) making the
diagrams \eqref{assoc0-eq}, with $\nabla$ and $\iota$ replaced by $\Delta$ and $\epsilon$ and with the directions of all arrows reversed, commute.
\end{definition}

A monoid is \emph{commutative} 
if $\nabla\circ \beta = \nabla$.
A comonoid is \emph{cocommutative} if $\beta\circ \Delta = \Delta$.

\begin{definition}
A \emph{bimonoid} in $\sC$ is a tuple $(A,\nabla,\iota,\Delta,\epsilon)$ where
$(A,\nabla,\iota)$ is a monoid, $(A,\Delta,\epsilon)$ is a comonoid,
the composition $\epsilon\circ \iota$ is the identity morphism $I\to I$, and these diagrams commute:
\be\label{compat-eq}
{\footnotesize
 \begin{diagram}[small]
 A\bullet A&\rTo^{\nabla} & A & \rTo^{\Delta} & A \bullet A \\
\dTo^{\Delta \bullet \Delta}  & &&& \uTo_{\nabla \bullet \nabla} \\
A \bullet  A \bullet A \bullet A  && \rTo^{\id\bullet \beta \bullet \id} &&
 A \bullet A \bullet A  \bullet A
\end{diagram}
\qquad\qquad
\begin{diagram}[small]
I & \rTo^{\iota} & A
\\ 
\dTo^{\cong} && \dTo_{\Delta} \\
I\bullet I & \rTo^{\ \ \iota \bullet \iota\ \ } & A \bullet A 
\end{diagram}
\qquad\qquad
\begin{diagram}[small]
A \bullet A& \rTo^{\ \ \epsilon\bullet \epsilon\ \ } & I \bullet I
\\ 
\dTo^{\nabla} && \dTo_{\cong} \\
A & \rTo^{\epsilon} & I
\end{diagram}
}
\ee
\end{definition}

A morphism of (bi, co) monoids is a morphism  in $\sC$ that commutes with the relevant (co)unit and (co)product morphisms.
If $A$ is a monoid then $A \bullet A$ is a monoid with product $(\nabla \bullet \nabla) \circ (\id \bullet \beta \bullet \id)$ and unit $(\iota \bullet \iota) \circ (I \xrightarrow{\sim} I \bullet I)$.
If $A$ is a comonoid then $A\bullet A$ is naturally a comonoid in a similar way.
The diagrams \eqref{compat-eq} express that the coproduct and counit of a bimonoid are monoid morphisms,
and that the product and unit are comonoid morphisms.

We are exclusively interested in these definitions applied to a few related categories.
Let $\kk$ be a field and write $\kVec$ for the usual category of $\kk$-vector spaces with
linear maps as morphisms. This category is symmetric monoidal relative to the standard tensor product $\otimes =\otimes_\kk$
and braiding map $x\otimes y \mapsto y \otimes x$,
with unit object $\kk$.
Monoids, comonoids, and bimonoids in this category are the familiar notions of \emph{$\kk$-algebras}, \emph{$\kk$-coalgebras}, and \emph{$\kk$-bialgebras}.
In this context, the unit $\iota : \kk \to A$ is completely determined by $\iota(1) \in A$, which we refer to as the \emph{unit element}.

Assume that $\sC$ is $\kk$-linear so that
the morphisms between any two fixed objects in $\sC$ form a $\kk$-vector space.
Let $(H,\nabla,\iota,\Delta,\epsilon)$ be a bimonoid in $\sC$.
The \emph{convolution product} of two morphisms $f,g : H \to H$ 
is then $f* g = \nabla \circ (f\bullet g) \circ \Delta : H \to H$.
The operation $* $ is associative and makes the vector space of morphisms $H \to H$ into a $\kk$-algebra with unit element $\iota \circ \epsilon$,
referred to as the \emph{convolution algebra} of $H$.
The bimonoid $H$ is a \emph{Hopf monoid} if the identity morphism $\id : H \to H$ has a left and right inverse $\antipode : H \to H$ in the convolution algebra.
The morphism $\antipode$ is called the \emph{antipode} of $H$; if it exists, then 
$\antipode$ is the unique morphism $H \to H$ such that 
 $\nabla \circ (\id \bullet \antipode) \circ \Delta=\nabla \circ (\antipode \bullet \id) \circ \Delta=\iota\circ \epsilon$.
Hopf monoids in $\kVec$ are  \emph{Hopf algebras}.

%
%
%
%

\subsection{Graded vector spaces}\label{gr-sect}

If $I$ is a set and $V_i$ for $i \in I$ is a $\kk$-vector space, then
 $\bigoplus_{i \in I} V_i$ is the vector space of sums $\sum_{i \in I} v_i$ where $v_i \in V_i$ for $i \in I$
and $v_i=0$ for all but finitely many indices $i \in I$.
We interpret the direct product $\prod_{i \in I} V_i$ as the vector space of arbitrary formal sums $\sum_{i \in I} v_i$ with $v_i \in V_i$.
There is an obvious inclusion $\bigoplus_{i \in I} V_i \subset \prod_{i \in I} V_i$ which is equality if $I$ is finite.

A vector space $V$ is \emph{graded} if it has a direct sum decomposition $V = \bigoplus_{n\in \NN} V_n$.
A linear map $\phi : U \to V$ between graded vector spaces is \emph{graded} if it has 
the form $\phi = \bigoplus_{n \in \NN} \phi_n$ where each $\phi_n: U_n \to V_n$ is linear.
If $U = \prod_{n \in \NN} U_n$ and $V = \prod_{n \in \NN} V_n$
are direct products of vector spaces, then 
we also use the term \emph{graded} to refer to the linear maps
$\phi : U \to V$
of the form $\phi  = \prod_{n \in \NN} \phi_n$ where each $\phi_n : U_n \to V_n$ is linear.

An algebra $(V,\nabla,\iota)$ is \emph{graded} if $V$ is graded, 
 $\nabla(V_i\otimes V_j) \subset V_{i+j}$ for all $i,j \in \NN$, and $\iota(\kk) \subset V_0$.
Similarly, a coalgebra $(V,\Delta,\epsilon)$ is \emph{graded} if $V$ is graded,
$\Delta(V_n) \subset \bigoplus_{i+j = n} V_i \otimes V_j$ for all $n \in \NN$, and $\epsilon(V_n) = 0$ for $n \in \PP$.
A bialgebra is \emph{graded} if it is graded as both an algebra and a coalgebra.
These notions correspond to 
(co, bi) monoids in
the category $\kGrVec$ whose objects are graded $\kk$-vector spaces $V = \bigoplus_{n \in \NN} V_n$ and
 whose morphisms are graded linear maps,
 in which the tensor product of objects $U$ and $V$
 is the graded vector space $U \otimes V = \bigoplus_{n \in \NN} (U\otimes V)_n$
with  $(U\otimes V)_n = \bigoplus_{i+j = n} U_i \otimes V_j$.
The unit object in $\kGrVec$ 
is the field
$\kk$,  graded such that all elements have degree zero.

\subsection{Word bialgebras}\label{word-sect}

We review the definition of a particular graded bialgebra
which will serve as a running example in later sections.
Throughout, we use the term \emph{word} to mean a finite sequence of positive integers.
If $w =w_1w_2\cdots w_n$ is a word with $n$ letters
and $I=\{i_1 < i_2 < \dots < i_k\} \subset[n]$ is a subset of indices, then
we set $w|_I = w_{i_1}w_{i_2}\cdots w_{i_k}$.
The shuffle product of two words $u$ and $v$ of length $m$ and $n$
is the formal linear combination of words
\[u \shuffle v = \sum_{\substack{I\subset [m+n] \\ |I| = m}} \shuffle_I(u,v)\] where $\shuffle_I(u,v)$ is the unique $(m+n)$-letter word $w$ with $w|_I = u$ and $w|_{I^c} = v$.
Multiplicities may result in this expression; for example, $12\shuffle 21 = 2\cdot 1221  + 1212 + 2121 + 2 \cdot 2112$.

If $w = w_1w_2\cdots w_m$ is a word with $m>0$ letters,
then we set $\max(w)=\max \{ w_1,w_2,\dots,w_m\}$.
For the empty word $\emptyset$, we define $\max(\emptyset) = 0$.
Let $\Words_n$ for $n \in \NN$ be the set of pairs $[w,n]$ with $\max(w) \leq n$
and define $\Words = \bigcup_{n \in \NN} \Words_n$.
Let $\bfW_n = \kk \Words_n$ be the $\kk$-vector space with $\Words_n$ as a basis and define $\bfW=\bigoplus_{n \in \NN} \bfW_n$.

Denote the word formed by adding $n \in \NN$
to each letter of $w=w_1w_2\cdots w_m$ by 
 \[w\uparrow n=(w_1+n)(w_2+n)\cdots (w_m+n).\]
Given words $w^1,w^2,\dots, w^l$  with $\max(w^i) \leq n$ and $a_1,a_2,\dots,a_l \in \kk$,  
let $\left[ \sum_{i } a_i w^i, n \right] = \sum_{i} a_i [w^i,n] \in \bfW_n$.
Now define $\nabla_\shuffle : \bfW \otimes \bfW \to \bfW $ to be the linear map with
\be\label{nabla-eq}
\nabla_\shuffle([v,m]\otimes [w,n]) =  [v \shuffle (w\uparrow m), n+m] \in \bfW_{m+n}
\ee
for $[v,m] \in \Words_m$ and $[w,n] \in \Words_n$.
Since $v$ and $w\uparrow m$ are words with disjoint sets of letters, there are no multiplicities in the right expression;
for example,
$\nabla_\shuffle([12,3]\shuffle [2,2]) = [125,5] + [152,5] +  [512,5]$.
Next let  $\epsilon_\odot : \bfW \to \kk$ and $\Delta_\odot : \bfW \to \bfW \otimes \bfW$
be the linear maps with\be
\label{odot-maps}
 \epsilon_\odot([w,n]) = \begin{cases} 
1 &\text{if } w = \emptyset \\
0 &\text{otherwise}
\end{cases}
\quand
\Delta_{\odot}([w,n]) = \sum_{i=0}^m [w_1\cdots w_i, n] \otimes [w_{i+1}\cdots w_m,n]
\ee
for $[w,n] \in \Words_n$ with $w=w_1w_2\cdots w_m$.
Finally write $\iota_\shuffle $ for the linear map $\kk \to \bfW$ with 
$
\iota_\shuffle(1) = [\emptyset, 0].
$
We consider $\bfW$ to be a graded vector space in which $[w,n] \in \Words_n$ is homogeneous with degree $\ell(w)$, the length of the word $w$.
The following is \cite[Theorem 3.5]{M1}:

\begin{theorem}\label{w-thm}
$(\bfW, \nabla_\shuffle, \iota_\shuffle, \Delta_\odot, \epsilon_\odot)$ is a graded bialgebra, but not a Hopf algebra.
\end{theorem}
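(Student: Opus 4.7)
The plan is to verify the algebra, coalgebra, and bialgebra axioms in turn, and then to exhibit a concrete obstruction to the existence of an antipode. Throughout, the grading convention ($[w,n]$ sitting in degree $\ell(w)$) makes it immediate that $\nabla_\shuffle$, $\iota_\shuffle$, $\Delta_\odot$, $\epsilon_\odot$ are graded maps, so I will focus on the structural axioms.

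First I would check that $(\bfW,\nabla_\shuffle,\iota_\shuffle)$ is an algebra. Unitality is a one-line check since $v\shuffle(\emptyset\uparrow m)=v$ and $\emptyset\shuffle(w\uparrow 0)=w$. For associativity, I would compute both bracketings of $[u,k]\cdot[v,m]\cdot[w,n]$ and use the identity $(v\uparrow k)\uparrow 0 = v\uparrow k$ together with $(w\uparrow m)\uparrow k = w\uparrow(m+k)$ to reduce the statement to ordinary associativity of the (unshifted) shuffle of three words in pairwise disjoint shifted alphabets. Dually, $(\bfW,\Delta_\odot,\epsilon_\odot)$ is a coalgebra by the standard verification for the deconcatenation coproduct: coassociativity amounts to summing over pairs $0\leq j\leq i\leq m$ of cut points of $w$, and the counit axiom follows because the only surviving terms of $(\epsilon_\odot\otimes\id)\Delta_\odot[w,n]$ and $(\id\otimes\epsilon_\odot)\Delta_\odot[w,n]$ correspond to $i=0$ and $i=m$ respectively.

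The heart of the argument is the bialgebra compatibility \eqref{compat-eq}. The two unit/counit squares are immediate: $\Delta_\odot[\emptyset,0]=[\emptyset,0]\otimes[\emptyset,0]$ and $\epsilon_\odot\nabla_\shuffle([v,m]\otimes[w,n])$ is nonzero iff $v=w=\emptyset$, matching $\epsilon_\odot[v,m]\hs\epsilon_\odot[w,n]$. For the main pentagon, I would expand both sides on a basis element $[v,m]\otimes[w,n]$. The right-hand side, obtained by deconcatenating $v$ at position $i$ and $w$ at position $j$ and then shuffling prefixes with prefixes and suffixes with suffixes (after suitable shifts), produces a double sum indexed by $(i,j)$ of terms $[v_{\leq i}\shuffle(w_{\leq j}\uparrow m),\ m+n]\otimes[v_{>i}\shuffle(w_{>j}\uparrow m),\ m+n]$. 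The left-hand side, $\Delta_\odot[v\shuffle(w\uparrow m),m+n]$, sums over all cut points of each shuffled word. The key combinatorial identity is the standard fact that every shuffle of $v$ with $w\uparrow m$ cut at some position $k$ arises uniquely from a choice of cut $i$ on $v$, cut $j$ on $w\uparrow m$, a shuffle of the prefixes, and a shuffle of the suffixes, with $k=i+j$. This bijection of indexing sets matches the two expressions term by term.

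The hardest step is probably the pentagon in the previous paragraph, though it is essentially bookkeeping once one has the bijection between the two indexings. Finally, to show $\bfW$ is not a Hopf algebra, I would exhibit a group-like element on which the antipode equation fails. The element $g=[\emptyset,1]$ is group-like since $\Delta_\odot(g)=g\otimes g$ and $\epsilon_\odot(g)=1$, so any antipode $\antipode$ would have to satisfy $\nabla_\shuffle(\antipode(g)\otimes g)=\iota_\shuffle\epsilon_\odot(g)=[\emptyset,0]$. Writing $\antipode(g)=\sum_i a_i[u^i,n_i]$ as a finite sum, one computes
\[
\nabla_\shuffle(\antipode(g)\otimes g)=\sum_i a_i[u^i\shuffle\emptyset,\,n_i+1]=\sum_i a_i[u^i,n_i+1],
\]
and since every basis element on the right has second coordinate $\geq 1$, this sum can never equal $[\emptyset,0]$. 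Hence no antipode exists, and $\bfW$ is a bialgebra but not a Hopf algebra.
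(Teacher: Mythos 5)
Your proof is correct. Note that the paper itself gives no argument for this statement --- it is quoted as \cite[Theorem 3.5]{M1} --- so there is no in-paper proof to compare against; your verification is the natural direct one. The bialgebra checks are the standard shuffle/deconcatenation compatibility arguments, made clean here by the fact that $v$ and $w\uparrow m$ live in disjoint alphabets (which is exactly what makes the cut-point bijection in your pentagon argument, and the term-by-term matching, unambiguous); the one identity you should state explicitly in a full write-up is that shifting distributes over shuffling, $(x\shuffle y)\uparrow k=(x\uparrow k)\shuffle(y\uparrow k)$, which together with $(w\uparrow m)\uparrow k=w\uparrow(m+k)$ gives associativity. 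Your obstruction to the antipode is also the right one: $[\emptyset,1]$ is group-like, any group-like element of a Hopf algebra must be invertible, and multiplication by $[\emptyset,1]$ strictly increases the second index, so $[\emptyset,0]$ is never attained --- this reflects the fact that $\bfW$ is not connected (its degree-zero component is spanned by all $[\emptyset,n]$), which is why the usual ``graded plus connected implies Hopf'' argument does not apply.
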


Let $w=w_1w_2\cdots w_n$ be a word. Suppose the set $S = \{w_1,w_2,\dots,w_n\}$ 
has $m$ distinct elements. 
If $\phi $ is the unique order-preserving bijection $S \to [m]$,
then
the \emph{flattened word} corresponding to $w$ is 
 $\st(w) = \phi(w_1)\phi(w_2)\cdots \phi(w_n)$.
 
A \emph{packed word} (also called a \emph{surjective word} \cite{Hazewinkel1}, \emph{Fubini word} \cite{Brends}, or \emph{initial word} \cite{PylPat}) is a word $w$ with $w = \st(w)$.
Define $\bfI_\P$ to be the subspace of $\bfW$ spanned by all differences $[v,m] - [w,n]$ 
where $[v,m],[w,n] \in \Words$ have $\st(v) = \st(w)$. The following is \cite[Proposition 3.7]{M1}:

\begin{proposition}\label{packed-prop}
The subspace $\bfI_\P$ is a homogeneous bi-ideal of $(\bfW, \nabla_\shuffle, \iota_\shuffle, \Delta_\odot, \epsilon_\odot)$.
The quotient bialgebra $\bfP = \bfW /\bfI_\P$ is a graded Hopf algebra.
\end{proposition}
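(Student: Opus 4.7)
The plan is to verify the three defining conditions of a homogeneous bi-ideal for $\bfI_\P$, and then deduce the Hopf structure on $\bfP$ from connectedness. \emph{Homogeneity} is immediate: if $\st(v) = \st(w)$ then $\ell(v) = \ell(w)$, so $[v,m] - [w,n]$ is a difference of basis elements of the same degree. The \emph{counit condition} $\epsilon_\odot(\bfI_\P) = 0$ is equally immediate, since $\st(v) = \st(w)$ forces $v = \emptyset \Leftrightarrow w = \emptyset$.

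For the \emph{ideal condition}, I would fix a generator $[v,m] - [w,n]$ with $\st(v)=\st(w)$ and any basis element $[u,k]$, and expand
\[
\nabla_\shuffle\bigl([v,m]\otimes[u,k]\bigr) - \nabla_\shuffle\bigl([w,n]\otimes[u,k]\bigr) = \sum_I \Bigl([\shuffle_I(v, u\uparrow m), m+k] - [\shuffle_I(w, u\uparrow n), n+k]\Bigr).
\]
The key combinatorial claim is that $\st(\shuffle_I(v, u\uparrow m)) = \st(\shuffle_I(w, u\uparrow n))$ for every position set $I$. This holds because $\max(v) \leq m$ makes every letter of $v$ strictly smaller than every letter of $u\uparrow m$, and analogously for $w$ and $u\uparrow n$. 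Hence the flattening of $\shuffle_I(v, u\uparrow m)$ places $\st(v)$ at the positions in $I$ and $\st(u)$ shifted up by $\max(\st(v))$ at the positions in $I^c$; the same recipe with $\st(w)$ replacing $\st(v)$ produces $\st(\shuffle_I(w, u\uparrow n))$, and these agree since $\st(v) = \st(w)$ gives $\max(\st(v)) = \max(\st(w))$. Each summand thus lies in $\bfI_\P$, and the right-handed ideal property follows by an identical argument on the other factor.

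For the \emph{coideal condition}, write $v = v_1\cdots v_p$ and $w = w_1\cdots w_p$, and telescope using the identity $a\otimes b - c\otimes d = a\otimes(b-d) + (a-c)\otimes d$ term-by-term in
\[
\Delta_\odot([v,m]) - \Delta_\odot([w,n]) = \sum_{i=0}^p\bigl([v_1\cdots v_i, m]\otimes[v_{i+1}\cdots v_p, m] - [w_1\cdots w_i, n]\otimes[w_{i+1}\cdots w_p, n]\bigr).
\]
Since $\st(v) = \st(w)$ restricts to $\st(v_1\cdots v_i) = \st(w_1\cdots w_i)$ and $\st(v_{i+1}\cdots v_p) = \st(w_{i+1}\cdots w_p)$ for every $i$ (initial and final segments inherit the same relative orders), each summand lies in $\bfI_\P \otimes \bfW + \bfW \otimes \bfI_\P$.

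Granted these three checks, $\bfP$ is a graded bialgebra as a quotient by a homogeneous bi-ideal contained in $\ker \epsilon_\odot$. To upgrade to a Hopf algebra, I would note that $\bfP$ is connected: $\bfW_0$ has basis $\{[\emptyset, n] : n \in \NN\}$, and all of these elements are identified in $\bfP$ since $\st(\emptyset) = \st(\emptyset)$, so $\bfP_0 = \kk$. The standard recursive construction of an antipode on a connected graded bialgebra, which requires no finite-dimensionality in each degree, then produces the Hopf structure. The one non-routine ingredient is the flattening-preservation claim in the ideal step, and even this is a direct consequence of the alphabet-disjointness built into the shift $u \uparrow m$ in the definition of $\nabla_\shuffle$.
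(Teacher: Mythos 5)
Your proof is correct, and since the paper simply cites \cite[Proposition 3.7]{M1} rather than giving an argument, your direct verification (flattening is compatible with shifted shuffles because $u\uparrow m$ lies in an alphabet strictly above that of $v$, and with prefix/suffix restriction for the coproduct, plus connectedness of the quotient to build the antipode recursively) is exactly the expected proof. One cosmetic caveat: the symbol $\bfW_0$ in the paper denotes $\kk\Words_0$ rather than the degree-zero graded component, so in the connectedness step you should say ``the degree-zero component of $\bfW$ is spanned by $\{[\emptyset,n]:n\in\NN\}$'' to avoid clashing with that notation.
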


The Hopf algebra $\bfP $ is 
the graded dual of the algebra of \emph{word quasi-symmetric functions} $\WQSym$  \cite{Maurice,NovelliThibon}.
Let $\Packed$ be the set of all packed words. 
If $[w,n] \in \Words$ and $w$ is a word with $m$ distinct letters then $v = \st(w)$
is the unique packed word such that $[w,n] + \bfI_\P = [v,m] + \bfI_\P$.
Identify $v \in \Packed$
with the coset $[v,m] + \bfI_\P$ so that we can view $\Packed$ as a basis for $\bfP$.
The unit element of $\bfP$ is then the empty packed word $\emptyset$, and the counit is the linear map $\epsilon_\odot : \bfP \to \kk$
with $\epsilon_\odot(\emptyset) = 1$ and $\epsilon_\odot(w) = 0$ for all  $\emptyset \neq w \in \Packed$.
%
For $u,v,w \in \Packed$ with $m = \max(u)$ and $n=\ell(w)$, 
\be\label{packed-prod-eq} \nabla_\shuffle(u \otimes v) = u\shuffle (v\uparrow m)
\qquand
\Delta_\odot(w) = \sum_{i=0}^n \st(w_1\cdots w_i) \otimes \st(w_{i+1}\cdots w_n).
\ee
The subspace of $\bfP$ spanned by the 
words in $\Packed$ that have no repeated letters
is a Hopf subalgebra. This is the well-known \emph{Malvenuto-Poirier-Reutenauer Hopf algebra} of permutations \cite{AguiarSottile,Mal2},
sometimes also called the Hopf algebra of \emph{free quasi-symmetric functions} $\FQSym$ \cite{NovelliThibon}.

\section{Linearly compact spaces}\label{cont-sect}

Let $U$ and $V$ be $\kk$-vector spaces.
Define $U^*$ to be the dual space of $U$, that is, the vector space of all $\kk$-linear maps $\lambda : U\to\kk$.
Given a linear map $\phi : U \to V$, define $\phi^*$
to be the linear map $V^* \to U^*$ with $\phi^*(\lambda) = \lambda \circ \phi$.
This makes $*$ into a contravariant functor $\kVec \to \kVec$.

We would like to be able to consider ``sub-bialgebras'' of $\bfW$ generated by certain infinite linear combinations of basis elements in $\Words$.
Such linear combinations are not well-defined in $\bfW$
but are naturally interpreted 
as elements of $\bfW^*$.
Therefore, we need a way of transferring the monoidal structures on the vector space $\bfW$ to its dual.

The full dual of an infinite-dimensional $\kk$-algebra is not naturally a $\kk$-coalgebra; see \cite[\S3.5]{Cartier}.
On the other hand, neither the standard form of graded duality nor the more general notion of restricted duality (see \cite[\S3.5]{Cartier})
 suffices for our application,
since $\bfW$ does not have finite graded dimension
and since the restricted dual will not permit infinite linear combinations.

The solution to these obstructions is to give the dual space a topology and consider monoidal structures
in the category of topological vector spaces rather than $\kVec$.
The topology in question is known as the \emph{linearly compact topology}, whose properties we quickly review.
Much of the background material in this section appears in \cite[Chapter 1]{Dieudonne}, so we omit some proofs.

A bilinear form $\langle\cdot,\cdot\rangle : U \times V \to \kk$ 
is \emph{nondegenerate} if $v \mapsto \langle \cdot,v\rangle$
is a bijection $V \to U^*$.
For example, the \emph{tautological form}
$\langle u, \lambda \rangle := \lambda(u)$ is a nondegenerate
bilinear form $U \times U^* \to \kk$.
The bilinear form $\langle a,b\rangle := ab$ is likewise a nondegenerate pairing $\kk \times \kk \to \kk$.

\begin{lemma}\label{sumprod-lem}
Suppose $\langle\cdot,\cdot\rangle : U \times V \to \kk$  is a nondegenerate bilinear form.
If there is a direct sum decomposition $U = \bigoplus_{i \in I} U_i$
then $V = \prod_{i \in I} V_i$ where $V_i = \{ v \in V : \langle u,v\rangle=0\text{ if }u \in U_j\text{ for } i\neq j\}$.
\end{lemma}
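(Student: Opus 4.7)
The plan is to transport everything to the dual space $U^*$ via the bijection $V \xrightarrow{\sim} U^*$ given by $v \mapsto \langle \cdot, v\rangle$, where the desired product decomposition becomes completely transparent. Once we show that $V_i$ maps isomorphically onto the subspace of $U^*$ consisting of functionals supported on $U_i$, the statement will reduce to the elementary fact that the dual of a direct sum is a direct product.

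Concretely, I would first record the natural isomorphism $U^* \cong \prod_{i \in I} U_i^*$ induced by the restriction maps $\lambda \mapsto \lambda|_{U_i}$: this is a bijection because an arbitrary element $u \in U$ has a unique expression $u = \sum_{i \in I} u_i$ with only finitely many $u_i \neq 0$, so a functional on $U$ is determined by, and can be reconstructed from, its restrictions to the summands $U_i$. For each $i \in I$, let $\pi_i : U \to U$ denote the projection with image $U_i$ and kernel $\bigoplus_{j \neq i} U_j$.

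Next, for $v \in V$, I would set $\lambda := \langle \cdot, v\rangle \in U^*$ and, for each $i \in I$, define $\lambda_i := \lambda \circ \pi_i \in U^*$. Nondegeneracy produces a unique $v_i \in V$ with $\langle \cdot, v_i\rangle = \lambda_i$, and by construction $\langle u, v_i\rangle = 0$ for $u \in U_j$ with $j \neq i$, so $v_i \in V_i$. The tuple $(v_i)_{i \in I}$ is the claimed expansion of $v$; the assignment $v \mapsto (v_i)_{i \in I}$ is injective because $\lambda = \sum_i \lambda_i$ (evaluated on any fixed $u \in U$ this sum is finite), hence $v$ is determined by its coordinates via nondegeneracy. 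Surjectivity is symmetric: given any family $(v_i)_{i \in I} \in \prod_{i \in I} V_i$, define $\lambda \in U^*$ by $\lambda(u) = \sum_{i \in I} \langle \pi_i(u), v_i\rangle$ — a finite sum for each $u$ — and let $v \in V$ be the unique element with $\langle \cdot, v\rangle = \lambda$; one checks that its coordinates recover the $v_i$.

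The argument has no real obstacle, but the one point that requires attention is the precise meaning of the identity $V = \prod_{i \in I} V_i$: it asserts that the coordinate map $v \mapsto (v_i)_{i \in I}$ is a bijection $V \to \prod_{i \in I} V_i$, not that $V$ is literally a Cartesian product. Keeping this interpretation in mind, the finite-support property of elements of $U$ is what licenses passing from a formal product indexed by $I$ back to a well-defined functional on $U$, and it is exactly this asymmetry between $\bigoplus$ and $\prod$ that the lemma is designed to encode.
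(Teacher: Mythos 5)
Your proposal is correct and follows essentially the same route as the paper: both decompose the functional $\langle\cdot,v\rangle$ via the projections onto the summands $U_i$ (equivalently, via $U^*\cong\prod_i U_i^*$), use nondegeneracy to pull each piece back to a unique $v_i\in V_i$, and use the finite-support property of elements of $U$ to reassemble an arbitrary tuple $(v_i)_{i\in I}$ into a well-defined functional, hence an element of $V$. The only difference is expository: you spell out the injectivity/surjectivity of the coordinate map, which the paper leaves implicit.
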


\begin{proof}
Identify  $\sum_{i \in I} v_i \in  \prod_{i \in I} V_i$ with the unique $v\in V$
satisfying $\langle u,v\rangle = \langle u,v_i\rangle$ for $i \in I$ and $u \in U_i$
to get an inclusion $\prod_{i \in I} V_i \hookrightarrow V$.
For $v \in V$, the linear map $U \to \kk$ with $u \mapsto \langle u,v\rangle$ for $u \in U_i$
and $u\mapsto 0$ for $u \in \bigoplus_{i\neq j} U_j$ has the form $u \mapsto \langle u,v_i\rangle$ for some $v_i \in V_i$,
and $v = \sum_{i \in I} v_i$. 
\end{proof}

Suppose $\langle\cdot,\cdot\rangle : U \times V \to \kk$ is a nondegenerate bilinear form
and $\{ u_i : i \in I\} $ is a basis for $U$.
For each $i \in I$, there exists a unique $v_i \in V$ with $\langle u_j, v_i \rangle = \delta_{ij}$ for all $j \in I$.
As $U = \bigoplus_{i \in I} \kk u_i$,
Lemma~\ref{sumprod-lem} implies that $V = \prod_{i \in I} \kk v_i$.
Thus each $v \in V$ can be uniquely expressed as the (potentially infinite) sum
$v=\sum_{i \in I} \langle u_i,v\rangle v_i $.
Following \cite{Dieudonne}, we call $\{v_i : i \in I\}$ a \emph{pseudobasis} for $V$;
this is sometimes also referred to as a \emph{continuous basis} (e.g., in \cite[\S3]{Pat}).

View each subspace $\kk v_i$ as a discrete topological space and 
give $V= \prod_{i \in I} \kk v_i$ the corresponding product topology;
this is the \emph{linearly compact topology} on $V$, also sometimes called the \emph{pseudocompact topology}.
This topology depends on the form $\langle\cdot,\cdot\rangle$ but not on the choice of basis for $U$.
Any finite intersection of 
 sets of the form $\left\{ \sum_{i \in I} c_i v_i  \in V : c_j \in C\right\}$ for fixed choices of
$C \subset  \kk$ and $j \in I$ is open in the linearly compact topology,
and every open subset of $V$ 
can be expressed as a union of these intersections. In other words, a basis for the linearly compact topology  consists of the sets \be
\label{basis-open}
\left\{ \sum_{i \in I} c_i v_i  \in V : c_i \in \kk\text{ for all $i \in I$ and }c_{i_1} \in C_1,\ c_{i_2} \in C_2,\ \dots,\ c_{i_p} \in C_p \right\}
\ee
for any finite list of indices $i_1,i_2,\dots,i_p \in I$ and any nonempty subsets $C_1,C_2,\dots,C_p \subset \kk$.
If $V$ is finite-dimensional,
then the linearly compact topology is discrete.

\begin{definition}
A \emph{linearly compact $\kk$-vector space} is a $\kk$-vector space $V$ equipped with
the linearly compact topology induced by a nondegenerate bilinear form $U \times V \to \kk$ for some $\kk$-vector space $U$.
Let $\whkVec$ denote the full subcategory of the category of topological $\kk$-vector spaces whose objects are linearly compact vector spaces.
\end{definition}

As noted in \cite{Dieudonne},
a topological vector space $V$ belongs to $\whkVec$ if and only if its topology 
is Hausdorff and linear (i.e., the open affine subspaces form a basis) and 
any family of closed affine subspaces with the finite intersection property has nonempty intersection.
The category  $\whkVec$ is closed under arbitrary direct products and finite direct sums, and contains 
the category of finite-dimensional vector spaces as a full subcategory.

A morphism between linearly compact vector spaces is a linear map that is continuous in the linearly compact topology.
We can be more explicit about which linear maps are continuous.
Suppose $V,W \in \whkVec$ have pseudobases $\{ v_i : i \in I\}$ and $\{w_j : j \in J\}$.
Let  $\psi : V \to W$ be a linear map and 
define $\psi_{ij} \in \kk$ 
to be the coefficient such that $\psi(v_i) = \sum_{j \in J} \psi_{ij} w_j$ for all $i \in I$.

\begin{lemma}\label{continuous-lem}
The map $\psi : V \to W$ is continuous in the linearly compact topology
if and only if
$\{ i \in I : \psi_{ij}\neq 0\}$ is finite for each $j \in J$
and 
$\psi\(\sum_{i \in I} c_i v_i \) = \sum_{j \in J} \(\sum_{i \in I} c_i \psi_{ij}\) w_j$  for any $c_i \in \kk$.
\end{lemma}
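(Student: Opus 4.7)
The plan is to unwind the definition of the linearly compact topology as a product topology and verify continuity at $0$ (which, by linearity, suffices for global continuity). First I would describe the standard neighborhood bases: since each $\kk v_i$ is discrete in the product $V = \prod_{i\in I} \kk v_i$, a neighborhood basis of $0$ in $V$ is given by the subspaces $V_G := \{\sum_{i\in I} c_i v_i : c_i = 0 \text{ for all } i \in G\}$ as $G$ ranges over finite subsets of $I$. Similarly, open neighborhoods of $0$ in $W$ are generated by $W_F := \{\sum_{j\in J} d_j w_j : d_j = 0 \text{ for all } j \in F\}$ for finite $F \subset J$.

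For the forward direction, assume $\psi$ is continuous. To establish (1), fix $j \in J$. Then $\psi^{-1}(W_{\{j\}})$ is open and contains $0$, so it contains some $V_G$ with $G \subset I$ finite. For any $i \notin G$, the basis element $v_i$ lies in $V_G$, hence $\psi(v_i) \in W_{\{j\}}$, i.e., $\psi_{ij} = 0$. Thus $\{i \in I : \psi_{ij}\neq 0\} \subset G$, which is finite. For (2), observe that in the product topology the net of finite truncations $\sum_{i \in F} c_i v_i$ converges to $\sum_{i \in I} c_i v_i$ as $F$ ranges upward over the finite subsets of $I$, since each coordinate stabilizes. Continuity of $\psi$ gives $\psi(\sum_i c_i v_i) = \lim_F \sum_{i \in F} c_i \psi(v_i) = \lim_F \sum_{j \in J} \bigl(\sum_{i \in F} c_i \psi_{ij}\bigr) w_j$, and for each fixed $j$ the coefficient stabilizes at $\sum_{i \in I} c_i \psi_{ij}$ (a finite sum by (1)), giving (2).

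For the converse, assume (1) and (2). It suffices to check continuity at $0$, so take a basic neighborhood $W_F$ with $F\subset J$ finite. Using (1), the set $G := \bigcup_{j\in F}\{i\in I : \psi_{ij}\neq 0\}$ is a finite subset of $I$. If $v = \sum_{i\in I} c_i v_i \in V_G$, then $c_i = 0$ for every $i$ with $\psi_{ij}\neq 0$ and $j \in F$; by (2), the coefficient of $w_j$ in $\psi(v)$ is $\sum_{i\in I} c_i \psi_{ij} = 0$ for each $j \in F$, so $\psi(v) \in W_F$. Hence $\psi(V_G) \subset W_F$ and $\psi$ is continuous.

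The main subtlety is the forward direction's convergence argument: one must justify that the net of finite truncations converges in the product topology (which is immediate from coordinate stabilization since each $\kk v_i$ is discrete) and that continuity then transports this convergence through $\psi$ so that one may read off the coefficients of $\psi(v)$ termwise. Once this is in hand, both implications reduce to straightforward bookkeeping with basic open sets.
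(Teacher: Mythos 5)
Your proof is correct. The sufficiency direction is essentially the paper's argument in different clothing: the paper checks that preimages of the subbasic open sets are unions of finite intersections of analogous sets, while you reduce to continuity at $0$ and verify $\psi(V_G)\subset W_F$; your reduction implicitly uses that translations are homeomorphisms for the linearly compact topology (true, since $V$ is a product of discrete groups), which is standard but worth a word. Where you genuinely diverge is the necessity of the formula. The paper introduces the auxiliary map $\phi\(\sum_i c_i v_i\) = \sum_j \(\sum_i c_i\psi_{ij}\) w_j$, observes it is well-defined, linear, and continuous, and then shows $\psi-\phi=0$ because a continuous linear map killing every $v_i$ must vanish (its preimage of $W\setminus\{0\}$ is open yet meets no nonempty open set, the span of the pseudobasis being dense). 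You instead argue directly: the net of finite truncations $\sum_{i\in F}c_i v_i$ converges to $\sum_{i\in I}c_i v_i$ in the product topology, continuity pushes this through $\psi$, and since each factor $\kk w_j$ is discrete the $j$-th coordinate of the limit is the eventually constant value $\sum_i c_i\psi_{ij}$ (finite by your part (1)). Both arguments exploit density of the span of the pseudobasis; yours trades the paper's subtraction-and-density trick for an explicit net computation, which is more hands-on and avoids having to posit the comparison map $\phi$, while the paper's version avoids nets altogether and isolates the reusable fact that a continuous map vanishing on a pseudobasis is zero.
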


In other words, $\psi$ is continuous  when 
$\sum_{i \in I} c_i \psi(v_i)$ is always defined and equal to $\psi\(\sum_{i \in I} c_i v_i \)$.
It is an instructive exercise to work through the proof of this basic lemma.

\begin{proof}
If the given properties hold then the inverse image of $\left\{ \sum_{i \in J} c_i w_i  \in W : c_j \in C \right\}$
under $\psi$ is a union of finite intersections of analogous sets in $V$ and is therefore open.
It follows in this case that the inverse image of any open subset of $W$ under $\psi$ is open, so $\psi$ is continuous.

Conversely, assume $\psi$ is continuous. Let $j \in J$. We first check that $\{ i \in I : \psi_{ij}\neq 0\}$ is finite. 
Consider the open subset $S = \{ \sum_{k \in J} c_k w_k \in W : c_j = 0\}$.
The inverse image $\psi^{-1}(S)$ is open since $\psi$ is continuous and nonempty since $0 \in S$.
Therefore $\psi^{-1}(S)$ contains an open subset of the form \eqref{basis-open}.
Let $i_1,i_2,\dots,i_p \in I$ be the finite list of indices corresponding to this subset. Then for any 
$g \in \psi^{-1}(S)$ and $i \in I \setminus \{i_1,i_2,\dots,i_p\}$ we have $g+v_i \in \psi^{-1}(S)$,
so $\psi(g) \in S$ and $\psi(g+v_i) \in S$, whence by linearity $\psi(v_i)= \sum_{k \in J} \psi_{ik} w_k \in S$.
But this says precisely that if $i \in I \setminus \{i_1,i_2,\dots,i_p\}$ then $\psi_{ij} = 0$,
so $\{ i \in I : \psi_{ij}\neq 0\} $ is a subset of the finite set $ \{i_1,i_2,\dots,i_p\}$.

%
The map $\phi : V \to W$ defined by $\phi\(\sum_{i \in I} c_i v_i \) = \sum_{j \in J} \(\sum_{i \in I} c_i \psi_{ij}\) w_j$  
is thus well-defined and linear, and also continuous by the first paragraph of the proof.
Since $\psi - \phi$ is then linear and continuous, to deduce that $\psi = \phi$, it suffices to show that the only continuous linear map
$V \to W$ with $v_i \mapsto 0$ for all $i \in I$ is zero. This holds as the (open) inverse 
image of the open set $W -\{0\}$ under such a map 
does not contain any finite linear combination of pseudobasis elements $\{v_i : i \in I\}$, 
and therefore does not contain any set of the form \eqref{basis-open}, 
so must be empty.
\end{proof}

Suppose we have nondegenerate bilinear forms $\langle\cdot,\cdot\rangle_i : U_i \times V_i \to \kk$ for $i\in\{1,2\}$.
If $\phi : U_2 \to U_1$ is linear, then there exists a unique linear map
$\phi^\perp : V_1 \to V_2$ such that $\langle \phi(u_2),v_1\rangle_1 = \langle u_2,\phi^\perp(v_1)\rangle_2$ for all $u_2 \in U_2$ and $v_1\in V_1$.
If $V_i = U_i^*$ and $\langle\cdot,\cdot\rangle_i$ is the tautological form,
then $\phi^* = \phi^\perp$.

\begin{corollary}
In the preceding setup, a linear map $\psi : V_1 \to V_2$ is continuous in the linearly compact topology 
if and only if $\psi = \phi^\perp$
for some linear map $\phi : U_2 \to U_1$.
\end{corollary}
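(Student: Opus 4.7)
The plan is to reduce the statement to a matrix computation in a chosen pair of bases/pseudobases, and then read off both directions from Lemma~\ref{continuous-lem}. Fix a basis $\{u_i : i \in I\}$ of $U_1$ with dual pseudobasis $\{v_i : i \in I\}$ of $V_1$, and a basis $\{u'_j : j \in J\}$ of $U_2$ with dual pseudobasis $\{v'_j : j \in J\}$ of $V_2$, so that $\langle u_i, v_k \rangle_1 = \delta_{ik}$ and $\langle u'_j, v'_l\rangle_2 = \delta_{jl}$.

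First I would translate both $\phi$ and $\phi^\perp$ into matrix data. A linear map $\phi : U_2 \to U_1$ is determined by scalars $\phi_{ji} \in \kk$ with $\phi(u'_j) = \sum_{i} \phi_{ji} u_i$, where for each fixed $j$ only finitely many $\phi_{ji}$ are nonzero (since $\phi(u'_j)$ lies in $U_1$). A short computation using the defining identity of $\phi^\perp$ shows that $\phi^\perp(v_i) = \sum_{j \in J} \phi_{ji} v'_j$: namely, writing $\phi^\perp(v_i) = \sum_j c_{ij} v'_j$ one has $c_{ij} = \langle u'_j, \phi^\perp(v_i)\rangle_2 = \langle \phi(u'_j), v_i\rangle_1 = \phi_{ji}$. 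Thus the ``matrix'' $(\phi^\perp)_{ij} = \phi_{ji}$ of $\phi^\perp$ satisfies the column finiteness condition $\#\{i : (\phi^\perp)_{ij} \neq 0\} < \infty$ for every $j \in J$, and Lemma~\ref{continuous-lem} then gives that $\phi^\perp$ is continuous.

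For the converse, assume $\psi : V_1 \to V_2$ is continuous and write $\psi(v_i) = \sum_{j} \psi_{ij} v'_j$. By Lemma~\ref{continuous-lem}, for each $j \in J$ the set $\{i \in I : \psi_{ij}\neq 0\}$ is finite, so I can define a linear map $\phi : U_2 \to U_1$ by $\phi(u'_j) = \sum_{i} \psi_{ij} u_i$, which is a well-defined finite sum. By the computation of the previous paragraph, $\phi^\perp$ has the same matrix as $\psi$, hence agrees with $\psi$ on the pseudobasis $\{v_i\}$. Since continuous linear maps are determined by their values on a pseudobasis (again by Lemma~\ref{continuous-lem}, or by the uniqueness argument used in its proof), $\psi = \phi^\perp$.

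The main conceptual obstacle, which is really just a bookkeeping check, is to verify that the correspondence $\phi \leftrightarrow \phi^\perp$ is exactly the one that swaps rows and columns of the matrix so that ``$\phi$ is a genuine linear map on $U_2$'' becomes ``$\phi^\perp$ has finite columns on the $V_1$ side,'' matching the hypothesis of Lemma~\ref{continuous-lem}. All other steps are routine applications of the nondegeneracy of the pairings and the previous lemma.
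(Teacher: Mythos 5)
Your argument is correct and takes essentially the approach the paper intends: the corollary is presented as an immediate consequence of Lemma~\ref{continuous-lem}, and your bookkeeping (the transposed matrix $(\phi^\perp)_{ij}=\phi_{ji}$, column finiteness coming from $\phi(u'_j)\in U_1$ being a finite sum, and the fact that continuous maps are determined by their values on a pseudobasis) is exactly the verification left to the reader. One small point: in the forward direction Lemma~\ref{continuous-lem} asks not only for column finiteness but also that $\phi^\perp\(\sum_i c_i v_i\)=\sum_j\(\sum_i c_i\phi_{ji}\)v'_j$; this follows from the same pairing computation you already use for the coefficients of $\phi^\perp(v_i)$, applied to a general $v=\sum_i c_i v_i$, so it is worth one explicit line.
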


The set of continuous linear maps $V \to W$ between linearly compact vector spaces is therefore a $\kk$-vector space.
Let $V^\vee$ be the vector space of continuous linear maps $V\to\kk$ for $V \in \whkVec$.
This vector space is sometimes called the \emph{continuous dual} of $V$ (for example, in \cite[\S7.4]{LamPyl}). 

\begin{corollary}
Suppose $\langle\cdot,\cdot\rangle : U \times V \to\kk$ is a nondegenerate bilinear form.
If $\{u_i : i \in I\}$ is a basis for $U$,
then the functions $\langle u_i,\cdot \rangle : V \to \kk$ for $i \in I$ are a basis for $V^\vee$.
\end{corollary}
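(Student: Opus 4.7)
The plan is to use the pseudobasis $\{v_i : i \in I\}$ of $V$ that is dual to $\{u_i\}$, namely the unique elements satisfying $\langle u_i, v_j\rangle = \delta_{ij}$, and to verify the two conditions for the functionals $\lambda_i := \langle u_i, \cdot\rangle$ to form a (Hamel) basis of $V^\vee$: linear independence, and the property that every continuous functional is a finite linear combination of the $\lambda_i$.

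First I would check that each $\lambda_i$ actually lies in $V^\vee$. Giving $\kk$ the nondegenerate bilinear form $(a,b)\mapsto ab$ with basis $\{1\}$ and dual pseudobasis $\{1\}$, the map $\phi:\kk\to U$ sending $1\mapsto u_i$ is linear, and the functional $\lambda_i$ is exactly $\phi^\perp:V\to\kk$; hence it is continuous by the corollary just preceding the statement. (Alternatively, $\lambda_i$ sends $v_j$ to $\delta_{ij}$, so the hypothesis of Lemma~\ref{continuous-lem} is immediate.) Linear independence is straightforward: a finite sum $\sum_{i\in F} a_i\lambda_i$ evaluated at $v_j$ equals $a_j$ whenever $j\in F$, so such a sum can vanish only if every $a_i=0$.

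The substantive step is spanning. Given a continuous linear functional $\mu:V\to\kk$, I would apply Lemma~\ref{continuous-lem} with $W=\kk$ and the singleton pseudobasis $\{1\}$: the lemma forces the index set $F:=\{i\in I:\mu(v_i)\neq 0\}$ to be finite, and moreover guarantees that
\[
\mu\!\left(\sum_{i\in I} c_i v_i\right) \;=\; \sum_{i\in I} c_i\,\mu(v_i)
\]
for every choice of scalars $c_i\in\kk$. Setting $a_i:=\mu(v_i)$ for $i\in F$, the same identity together with $\lambda_j\!\left(\sum_{i} c_i v_i\right)=c_j$ shows that $\mu=\sum_{i\in F} a_i\lambda_i$ on all of $V$, so $\mu$ lies in the $\kk$-span of $\{\lambda_i:i\in I\}$.

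The only mild subtlety, and the step I would present most carefully, is the appeal to Lemma~\ref{continuous-lem} in its two roles: once to conclude that continuity of $\mu$ forces the support $F$ to be finite (so that only finite linear combinations of the $\lambda_i$ arise), and again to justify that a continuous functional is determined by its values on the pseudobasis via the termwise formula above. Everything else is a routine consequence of the duality $\langle u_i,v_j\rangle=\delta_{ij}$.
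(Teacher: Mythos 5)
Your argument is correct and is essentially the paper's intended route: the paper leaves this corollary as an immediate consequence of Lemma~\ref{continuous-lem} (equivalently, of the preceding corollary identifying continuous functionals $V\to\kk$ with maps $\phi^\perp$ for $\phi:\kk\to U$, i.e.\ with elements of $U$), which is exactly the mechanism you spell out. Your verification of continuity of the $\lambda_i$, linear independence, and spanning via finite support is the straightforward expansion of that same idea.
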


If $\psi : V \to W$ is a continuous linear map
then $\psi^* : W^* \to V^*$ restricts to a map $W^\vee \to V^\vee$, which we denote $\psi^\vee$.
The operation $\vee$ is then a contravariant functor $\whkVec\to \kVec$.
The preceding corollary implies that $U \in \kVec$ is naturally isomorphic to $(U^*)^\vee$ as a vector space
and that $V \in \whkVec$ is naturally isomorphic to $(V^\vee)^*$ as a topological vector space.
Thus, 
if $V \in \whkVec$ then the tautological pairing $V^\vee \times V \to \kk$
is nondegenerate and the linearly compact topology induced by this form recovers the topology on $V$.
We can summarize these observations as follows:

\begin{proposition} The functors $* : \kVec \to \whkVec$ and $\vee : \whkVec \to\kVec$ are dualities of categories.
\end{proposition}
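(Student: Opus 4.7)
The plan is to exhibit explicit natural isomorphisms
$\eta : \id_{\kVec} \Rightarrow \vee \circ *$ and $\mu : \id_{\whkVec} \Rightarrow * \circ \vee$,
show each component is an isomorphism in the appropriate category (a linear bijection for $\eta$, a homeomorphic linear bijection for $\mu$), and then verify naturality by direct computation with evaluation maps. Once this is done, a functor pair with such natural isomorphisms constitutes a duality in the sense of contravariant equivalences of categories.

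First I would define, for each $U \in \kVec$, the evaluation map $\eta_U : U \to (U^*)^\vee$ by $\eta_U(u)(\lambda) = \lambda(u)$. To see $\eta_U(u) \in (U^*)^\vee$, fix a basis $\{u_i : i \in I\}$ of $U$ and write $u = \sum_{i \in J} c_i u_i$ with $J \subset I$ finite; then $\eta_U(u)$ only depends on the coordinates $\lambda(u_i)$ for $i \in J$, so it is continuous in the linearly compact topology on $U^*$ by Lemma~\ref{continuous-lem}. By the corollary just before the proposition, $\{\eta_U(u_i) = \langle u_i,\cdot\rangle : i \in I\}$ is a basis of $(U^*)^\vee$, so $\eta_U$ sends a basis to a basis and is therefore a linear isomorphism.

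Next I would define $\mu_V : V \to (V^\vee)^*$ by $\mu_V(v)(\lambda) = \lambda(v)$ for $V \in \whkVec$. The tautological pairing $V^\vee \times V \to \kk$ is nondegenerate (the nondegeneracy in $V^\vee$ is the definition of dual basis; nondegeneracy in $V$ follows because every $0\neq v \in V$ has a nonzero coordinate in the pseudobasis, and the corresponding dual coordinate function is continuous). Thus $\mu_V$ is a linear bijection. Moreover, the linearly compact topology induced on $V$ by this pairing is by construction the topology pulled back from $(V^\vee)^*$, and by the remark preceding the proposition this coincides with the original topology on $V$; hence $\mu_V$ is a homeomorphism, i.e., an isomorphism in $\whkVec$.

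Finally I would verify naturality. Given $\phi : U \to U'$ in $\kVec$ and $u \in U$, both $(\phi^*)^\vee(\eta_U(u))$ and $\eta_{U'}(\phi(u))$ send $\lambda \in (U')^*$ to $\lambda(\phi(u))$, so the square commutes; and given a continuous $\psi : V \to W$ in $\whkVec$ with $v \in V$, both $(\psi^\vee)^*(\mu_V(v))$ and $\mu_W(\psi(v))$ send $\lambda \in W^\vee$ to $\lambda(\psi(v))$. The main subtle point, and really the only place where anything beyond formal manipulation enters, is the topological half of Step~2: one must know that the linearly compact topology does not distinguish between pairing $V$ against any pre-dual $U$ and pairing $V$ against $V^\vee$ itself. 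Since this independence of topology from the choice of pre-dual was recorded just before the statement of the proposition, the argument reduces to assembling the pieces.
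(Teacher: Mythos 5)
Your proof is correct and takes essentially the same route as the paper, which presents this proposition as a summary of the immediately preceding observations: the corollary that $\{\langle u_i,\cdot\rangle\}$ is a basis of $V^\vee$ gives $U \cong (U^*)^\vee$ naturally, the tautological pairing $V^\vee\times V\to\kk$ is nondegenerate and recovers the topology so $V\cong (V^\vee)^*$ in $\whkVec$, and your evaluation maps and naturality checks are exactly the details the paper leaves implicit. The only small caveat is that your parenthetical argument for $\mu_V$ being a bijection only verifies that the pairing separates points (injectivity); surjectivity onto $(V^\vee)^*$ should instead be deduced from that same corollary, since a functional on $V^\vee$ is determined by arbitrary values $c_i$ on the basis $\langle u_i,\cdot\rangle$ and is realized by the pseudobasis element $\sum_i c_i v_i$ --- which is precisely the nondegeneracy statement recorded in the paper just before the proposition.
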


Define the \emph{completion} of a $\kk$-vector space $U$ 
with respect to a given basis $\{ u_i : i \in I\}$ to be the vector space 
$\hat U = \prod_{i \in I} \kk u_i$ with the product topology, where each subspace $\kk u_i$ is  discrete.
In other words, $\hat U$ is the linearly compact $\kk$-vector space with $\{ u_i :i \in I\}$ as a pseudobasis.
Of course, if $U$ is finite-dimensional then $U = \hat U$.
The bilinear form $\langle\cdot,\cdot\rangle : U\times U \to \kk$ with $\langle u_i,u_j\rangle = \delta_{ij}$ 
 extends to a nondegenerate  bilinear form $U \times \hat U \to \kk$.
The space $\hat U$ is distinguished from $U^*$ in 
having a fixed inclusion $U \subset \hat U$.
Relative to this inclusion,  $U$ is a dense subset of $\hat U$, 
which explains why $\hat U$ is referred to as a completion.

The category $\whkVec$ has the following monoidal structure.
For objects $V,W,V',W' \in \whkVec$
and morphisms $\phi : V\to V'$
and $\psi : W \to W'$,
define \[V\htimes W = (V^\vee \otimes W^\vee)^*
\qquand
\phi \htimes \psi = (\phi^\vee \otimes \psi^\vee)^*.\]
The object $V \htimes W$ is a linearly compact vector space and the linear map $\phi \htimes \psi$ is continuous in the linearly compact topology.
There is a canonical inclusion 
$V \otimes W \hookrightarrow V \htimes W$ given by the linear map
identifying $v \otimes w$ for $v \in V$ and $w \in W$ with the linear function that has $\lambda \otimes \mu \mapsto \lambda(v)\mu(w)$ for $\lambda \in V^\vee$ and $\mu \in W^\vee$.
Relative to this inclusion, $V\otimes W$ is a dense subset of the linearly compact space $V\htimes W$,
and for this reason one calls $\htimes$ the \emph{completed tensor product}.
If $V$ and $W$ have pseudobases $\{v_i : i \in I\}$ and $\{w_j : j \in J\}$,
then the image of the set 
 $\{ v_i \otimes w_j  : (i,j) \in I\times J\} \subset V\otimes W$ in $V\htimes W$
is a pseudobasis.
We usually identify $V \otimes W$ with its image in $V\htimes W$ without comment.

%
%

Let $\beta $ be the isomorphism $V \otimes W \xrightarrow{\sim} W\otimes V$ induced by $x\otimes y \mapsto y \otimes x$.
This map uniquely extends to an isomorphism 
$\hat\beta : V\htimes W \to W\htimes V$ for all $V,W \in \whkVec$.
Recall that $\kk$ is a linearly compact vector space with the discrete topology. 

\begin{proposition}
The category $\whkVec$ is symmetric monoidal 
relative to the completed tensor product $\htimes$, braiding map $\hat\beta$, and unit object $\kk$.
\end{proposition}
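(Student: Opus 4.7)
The plan is to transport the symmetric monoidal structure from $(\kVec, \otimes, \beta, \kk)$ to $(\whkVec, \htimes, \hat\beta, \kk)$ through the duality of categories established in the preceding proposition, namely the mutually inverse contravariant equivalences $* : \kVec \to \whkVec$ and $\vee : \whkVec \to \kVec$. By construction $V \htimes W = (V^\vee \otimes W^\vee)^*$ and $\phi \htimes \psi = (\phi^\vee \otimes \psi^\vee)^*$, so $\htimes$ is literally the image under $*$ of the ordinary tensor bifunctor on $\kVec$. Since $\kVec$ is symmetric monoidal (a standard fact), every coherence diagram needed in $\whkVec$ will be the image of such a diagram in $\kVec$.

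First I would set up the structure morphisms. Define the associator $\hat\alpha_{U,V,W}:(U\htimes V)\htimes W\to U\htimes(V\htimes W)$ as the image under $*$ of the standard associator in $\kVec$ applied to $U^\vee,V^\vee,W^\vee$. The unit isomorphisms $\kk\htimes V\cong V\cong V\htimes \kk$ come from the canonical identifications $\kk^\vee=\kk$ (since $\kk$ is one-dimensional and carries the discrete topology) together with $(\kk\otimes V^\vee)^*\cong (V^\vee)^*\cong V$. The candidate braiding $\hat\beta_{V,W}$ is, as described in the text, the unique continuous extension of $\beta_{V,W}:V\otimes W\to W\otimes V$ along the dense inclusion $V\otimes W\subset V\htimes W$.

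Next I would verify that this $\hat\beta_{V,W}$ coincides with the map $(\beta_{W^\vee,V^\vee})^*$ obtained by dualizing the braiding of $\kVec$. Both are continuous linear maps $V\htimes W\to W\htimes V$, and both implement the swap $v\otimes w\mapsto w\otimes v$ on the dense subset $V\otimes W$. If $\{v_i\}$ and $\{w_j\}$ are pseudobases of $V$ and $W$, then $\{v_i\otimes w_j\}$ is a pseudobasis of $V\htimes W$ lying inside $V\otimes W$, and Lemma~\ref{continuous-lem} implies that a continuous linear map out of $V\htimes W$ is determined by its values on any pseudobasis. Hence the two maps agree. This is the main place where the linearly compact topology enters the argument; everything else is formal transport through the duality.

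Finally, the pentagon, triangle, the two hexagons for $\hat\beta$, and the symmetry identity $\hat\beta\circ\hat\beta=\id$ all hold in $\whkVec$ because they hold in $\kVec$ and the contravariant functor $*$ preserves commutativity of diagrams. The main obstacle is therefore the bookkeeping required to identify the dualized associators, unitors, and braiding with the structures implicit in the definition of $\htimes$; but this follows directly from the naturality of the tautological pairing $V^\vee\times V\to\kk$ and from $\vee$ being inverse to $*$, so that each coherence axiom in $\whkVec$ appears as the image under $*$ of the corresponding axiom in $\kVec$.
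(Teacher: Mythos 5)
Your proposal is correct. The paper itself offers no written argument (it declares the verification routine, the intended route being a direct check of the coherence diagrams using that $V\otimes W$ is dense in $V\htimes W$ and that all structure maps are continuous), so your argument is a somewhat more structured take on the same underlying facts: you transport the entire symmetric monoidal structure of $(\kVec,\otimes,\beta,\kk)$ across the contravariant equivalences $*$ and $\vee$ established in the preceding proposition, observing that $V\htimes W=(V^\vee\otimes W^\vee)^*$ and $\phi\htimes\psi=(\phi^\vee\otimes\psi^\vee)^*$ are by definition exactly the transported bifunctor, so that the pentagon, triangle, hexagon, and symmetry axioms come for free from $\kVec$. The one point that genuinely requires the topology is the identification of the transported braiding (and unitors) with the maps named in the statement, and you handle this correctly: $\hat\beta$ and $(\beta_{W^\vee,V^\vee})^*$ are both continuous, agree on the pseudobasis $\{v_i\otimes w_j\}\subset V\otimes W$, and hence coincide by Lemma~\ref{continuous-lem} (a continuous map vanishing on a pseudobasis is zero), while $\kk^\vee\cong\kk$ gives the unit identifications. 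Compared with the direct check implicit in the paper, your route buys automatic coherence at the cost of this bookkeeping of identifications; both are complete, and your write-up makes explicit where linear compactness is actually used.
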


\begin{proof}
Checking this proposition is a routine exercise from the axioms \cite[Chapter 1]{AguiarMahajan}.
One may simply transfer all arguments in the proof that $\kVec$ is symmetric monoidal 
to $\whkVec$ by duality.
\end{proof}

Since $\whkVec$ is symmetric monoidal, we have corresponding notions of 
(co, bi, Hopf) monoids in this category.
We refer to monoids, comonoids, bimonoids, and Hopf monoids in $\whkVec$ respectively as
\emph{linearly compact algebras}, \emph{coalgebras}, \emph{bialgebras}, and \emph{Hopf algebras}.
A structure of this type consists explicitly of a linearly compact vector space $V \in \whkVec$ along with continuous linear maps $V\htimes V \to V$,
$\kk \to V$, $V\to V\htimes V$, and $V\to \kk$ satisfying the conditions in Section~\ref{monoidal-sect}.

Alternatively, one can define linearly compact (co, bi, Hopf) algebras in $\whkVec$ entirely in terms of 
(co, bi, Hopf) algebras by duality.
Let $U \in \kVec$ and $V \in \whkVec$ and let $\langle\cdot,\cdot\rangle : U \times V \to \kk$ be a nondegenerate bilinear form.
Define
$\langle u_1\otimes u_2, v_1\otimes v_2\rangle = \langle u_1,v_1\rangle\langle u_2,v_2\rangle$
for $u_i \in U$ and $v_i \in V$
and extend by continuity and linearity to define a nondegenerate bilinear form $(U\otimes U)\times (V\htimes V) \to \kk$
that is continuous in the second coordinate.
Also let $\langle a,b\rangle=ab$ for $a,b \in \kk$.

Now suppose 
$\nabla : U \otimes U \to U$, $\iota : \kk \to U$, $\Delta : U \to U \otimes U$, and $\epsilon : U\to\kk$
are linear maps 
and
$\hat \nabla : V \htimes V \to V$, $\hat \iota : \kk \to V$, $\hat \Delta : V \to V \htimes V$, and $\hat\epsilon : V\to\kk$
are continuous linear maps
such that 
\[
\langle \nabla(u_1\otimes u_2), v\rangle = \langle u_1\otimes u_2,\hat\Delta(v)\rangle
\qquand
\langle \iota(a),v\rangle = \langle a, \hat\epsilon(v)\rangle\]
for all $u_1,u_2 \in U$, $v \in V$, and $a \in \kk$
and
\[
\langle \Delta(u), v_1\otimes v_2\rangle = \langle u, \hat\nabla(v_1\otimes v_2)\rangle
\qquand
\langle \epsilon(u),b\rangle = \langle u, \hat\iota(b)\rangle
\]
for all $u \in U$, $v_1,v_2 \in V$, and $b \in \kk$.
Either map in each of the pairs $(\nabla,\hat \Delta)$, $(\iota,\hat \epsilon)$, $(\Delta,\hat \nabla)$ and $(\epsilon,\hat\iota)$
then
uniquely determines the other.

In this setup, $(U,\nabla,\iota)$ is an algebra if and only if $(V,\hat\Delta,\hat\epsilon)$ is a linearly compact  coalgebra;
 $(U,\Delta,\epsilon)$ is a coalgebra if and only if $(V,\hat\nabla,\hat\iota)$ is a linearly compact  algebra;
and $(U,\nabla,\iota,\Delta,\epsilon)$ is a bialgebra (respectively, Hopf algebra) if and only if $(V,\hat\nabla,\hat\iota,\hat\Delta,\hat\epsilon)$
is a linearly compact  bialgebra (respectively, Hopf algebra). 
In these cases, we say that the monoidal structure on $V$ is the \emph{(algebraic) dual} of the structure on $U$ via the form $\langle\cdot,\cdot\rangle$.

This perspective indicates how to give a linearly compact (co, bi, Hopf) algebra structure to the completed tensor product or direct sum of two
linearly compact (co, bi, Hopf) algebras.
For example, suppose $U_1$ and $U_2$ are algebras and $V_i$ is the linearly compact coalgebra dual to $U_i$.
Then $U_1 \otimes U_2$ and $U_1 \oplus U_2$ are both naturally algebras, and 
we can identify $V_1 \htimes V_2$ with the dual of $U_1 \otimes U_2$
and $V_1\oplus V_2$ with the dual of $U_1 \oplus U_2$ in order to interpret both objects
as linearly compact coalgebras.
A similar statement holds if we assume each $U_i$ is a coalgebra, bialgebra, or Hopf algebra so 
that each $V_i$ is a linearly compact algebra, bialgebra, or Hopf algebra, respectively.

\begin{example}\label{kk[x]-ex}
Let $\kk[x] = \bigoplus_{n \in \NN} \kk x^n$ and $\kk[[x]] = \prod_{n \in \NN} \kk x^n$ denote the $\kk$-algebras of polynomials and formal power series in $x$.
The bilinear form $ \kk[x] \times \kk[[x]] \to \kk$
with $\langle x^m, \sum_{n \in \NN} c_n x^n \rangle = c_m$ is nondegenerate,
and restricts to a nondegenerate form $\kk[x] \times \kk[x] \to \kk$. 

The space $\kk[x]$ is a graded Hopf algebra whose coproduct, counit, and antipode are the algebra morphisms with $\Delta(x) = 1 \otimes x + x\otimes 1$,
$\epsilon(x) = 0$, and  $\antipode(x) = -x$.
The space $\kk[[x]]$ is a linearly compact Hopf algebra whose coproduct, counit, and antipode are the linearly compact algebra morphisms with the same formulas.

The Hopf algebra $\kk[x]$ is its own graded dual via the form $\langle\cdot,\cdot\rangle$, but $\kk[[x]]$ is its algebraic dual.
The completed tensor product $\kk[[x]]\htimes \kk[[x]]$ is isomorphic to the vector space of formal power series $\kk[[x,y]]$ in two commuting variables.
\end{example}

\begin{example}\label{ext-ex}
Any graded (co, bi, Hopf) algebra of \emph{finite graded dimension} (that is, whose homogeneous components are each finite-dimensional)
extends to a linearly compact  (co, bi, Hopf) algebra.
In detail, suppose $V = \bigoplus_{n\in \NN} V_n$ is a graded $\kk$-vector space where each $V_n$ is finite-dimensional.
Let $\hat V = \prod_{n \in \NN} V_n$ 
and give this space the product topology in which each subspace $V_n$ is discrete.
Then $\hat V$ is  a linearly compact vector space
and any graded linear map $V \otimes V \to V$ or $ \kk \to V$ or $ V \to V \otimes V$ or $ V \to \kk$
extends uniquely to a continuous linear map 
$\hat V \htimes \hat V \to \hat V$ or $\kk \to \hat V$ or $\hat V \to \hat V \htimes \hat V$ or $\hat V \to \kk$, respectively.
If $V$ has the structure of a graded (bi, co, Hopf) algebra, then these extensions make $\hat V$ into
a linearly compact (bi, co, Hopf) algebra; the relevant structure on $\hat V$ is isomorphic to the algebraic dual of the graded dual of $V$.
\end{example}

\begin{remark}
Linearly compact (bi, co, Hopf) algebras have appeared in a few places previously in the literature, usually without being so named.
For example, the ``bialgebras'' $\hat \Gamma$ and $\hat \Lambda$ in \cite[\S9]{Buch} are linearly compact bialgebras.
Likewise, the ``Hopf algebras'' $\mathfrak{m}\mathrm{Sym}$, $\mathfrak{m}\mathrm{QSym}$, and $\mathfrak{m}\mathrm{MR}$
introduced in \cite{LamPyl} and further studied in \cite{Pat} are all linearly compact Hopf algebras.
\end{remark}


Recall that $\Words$ is the set of pairs $[w,n]$ where $n \in \NN$ and $w$ is a word with letters in $\{1,2,\dots,n\}$, and
 $\bfW = \kk\Words$.
Define $\hat\bfW$ to be 
the completion of $\bfW$ with respect to the basis $\Words$.
For $\sigma \in \hat\bfW$ and $[w,n] \in \Words$,
let $\sigma(w,n)\in \kk$ denote the coefficient  such that $\sigma = \sum_{[w,n] \in \Words} \sigma(w,n) [w,n]$.
The associated nondegenerate bilinear form  $\langle\cdot,\cdot\rangle : \bfW \times \hat\bfW \to \kk$
is then 
\be
\label{line-form} \langle \sigma,\tau \rangle = \sum_{[w,n] \in \Words} \sigma(w,n)\tau(w,n) 
\qquad\text{for $\sigma \in \bfW$ and $\tau \in \hat\bfW$.}
\ee 
Define $\nabla_\odot : \hat\bfW \htimes \hat\bfW \to \hat\bfW$ 
to be the continuous linear map
with 
\be\label{nabla-odot-eq}
\nabla_\odot([v,m] \otimes [w,n]) = \begin{cases}  [vw, m] &\text{if }m=n \\ 0&\text{otherwise}\end{cases}
\ee
for $[v,m],[w,n] \in\Words$.
Define
$\Delta_\shuffle : \hat\bfW \to \hat\bfW\htimes \hat\bfW$
to be the continuous linear map
with
\be\label{delta-shuffle-eq}
\Delta_\shuffle([w,n]) = \sum_{m=0}^{n} \left[w \cap \{1,2,\dots,m\},m\right] \otimes \left[(w\downarrow m) \cap \{1,2,\dots,n-m\} ,n-m\right]
\ee
where if $p=\ell(w)$ then
$w\downarrow m = (w_1-m)(w_2-m)\dots (w_{p}-m)$
and where 
 $w \cap S$ denotes the subword of $w$ formed by omitting all letters not in $S$.
Define $\epsilon_\shuffle : \hat\bfW \to \kk$ and $\iota_\odot : \kk \to \hat\bfW$ to be the continous linear maps with 
\be
\label{co-units-eq}
 \epsilon_\shuffle([w,n]) = \begin{cases} 1 &\text{if }n = 0
\\
0&\text{otherwise}
\end{cases}
\qquand 
\iota_\odot(1) = \sum_{n \in \NN} [\emptyset,n].
\ee

\begin{theorem}\label{w-cont-thm}
$(\hat\bfW, \nabla_\odot, \iota_\odot, \Delta_\shuffle,\epsilon_\shuffle)$
is a linearly compact bialgebra.
\end{theorem}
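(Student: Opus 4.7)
The plan is to prove this by invoking Theorem~\ref{w-thm} together with the duality principle stated just before Example~\ref{kk[x]-ex}: if $(U,\nabla,\iota,\Delta,\epsilon)$ is a bialgebra in $\kVec$, then a linearly compact vector space $V$ equipped with continuous maps dual to these via a nondegenerate pairing automatically forms a linearly compact bialgebra. So the strategy is to recognize $(\hat\bfW, \nabla_\odot, \iota_\odot, \Delta_\shuffle, \epsilon_\shuffle)$ as the algebraic dual of $(\bfW, \nabla_\shuffle, \iota_\shuffle, \Delta_\odot, \epsilon_\odot)$ under the bilinear form \eqref{line-form}, and then appeal to this general principle to finish.

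First I would note that, since $\Words$ is a basis for $\bfW$ and a pseudobasis for $\hat\bfW$, the form \eqref{line-form} is nondegenerate and each of $\nabla_\odot, \iota_\odot, \Delta_\shuffle, \epsilon_\shuffle$ is continuous by construction (the formulas \eqref{nabla-odot-eq}--\eqref{co-units-eq} express them on the pseudobasis $\Words$ and extend by linearity and continuity in the sense of Lemma~\ref{continuous-lem}). Then there are four duality identities to check:
\begin{enumerate}
\item $\langle \Delta_\odot([w,n]), [v_1,m_1]\otimes[v_2,m_2]\rangle = \langle [w,n], \nabla_\odot([v_1,m_1]\otimes[v_2,m_2])\rangle$;
\item $\langle \epsilon_\odot([w,n]), b\rangle = \langle [w,n], \iota_\odot(b)\rangle$ for $b \in \kk$;
\item $\langle \nabla_\shuffle([v_1,m_1]\otimes [v_2,m_2]), [w,n]\rangle = \langle [v_1,m_1]\otimes[v_2,m_2], \Delta_\shuffle([w,n])\rangle$;
\item $\langle \iota_\shuffle(a), [w,n]\rangle = \langle a, \epsilon_\shuffle([w,n])\rangle$ for $a \in \kk$.
\end{enumerate}

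For (1), both sides vanish unless $n = m_1 = m_2$ and $w$ factors as the concatenation $v_1 v_2$, reducing to a tautology by the definition of $\Delta_\odot$ in \eqref{odot-maps}. For (2) and (4), both sides are Kronecker deltas checking that $w$ is the empty word and that $n = 0$, respectively; the fact that $\iota_\odot(1)$ must be the \emph{infinite} sum $\sum_{n \in \NN}[\emptyset,n]$ rather than a single basis element is exactly what makes $\hat\bfW$ (and not $\bfW$) the natural home for this dual structure. The main piece of work is identity (3): the left-hand side equals the multiplicity of $w$ in the shuffle $v_1 \shuffle (v_2 \uparrow m_1)$ when $n = m_1 + m_2$, and since the letters of $v_1$ live in $\{1,\dots,m_1\}$ while those of $v_2\uparrow m_1$ live in $\{m_1+1,\dots,m_1+m_2\}$, these disjoint alphabets force a \emph{unique} shuffle index set (namely the positions of $w$ whose letters are $\leq m_1$), so the multiplicity is $0$ or $1$. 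The coefficient is $1$ precisely when $v_1 = w \cap \{1,\dots,m_1\}$ and $v_2 = (w\downarrow m_1)\cap\{1,\dots,m_2\}$, which is exactly the single surviving term in the expansion of $\Delta_\shuffle([w,n])$ given by \eqref{delta-shuffle-eq}.

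Having established all four dualities, the general duality principle of Section~\ref{cont-sect} immediately converts the bialgebra axioms for $\bfW$ (Theorem~\ref{w-thm}) into the linearly compact bialgebra axioms for $\hat\bfW$, completing the proof. The main obstacle is really just the bookkeeping in step (3): one must verify that the disjointness of alphabets after shifting by $m_1$ makes the shuffle coefficient equal to $1$ and forces the shuffle-index set to agree with the partition of positions used by the $\Delta_\shuffle$ formula. No continuity subtleties arise beyond Lemma~\ref{continuous-lem}, since each structure map is defined on the pseudobasis $\Words$ by a locally finite rule.
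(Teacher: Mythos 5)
Your proposal is correct and follows exactly the paper's route: the paper's proof simply observes that $(\hat\bfW, \nabla_\odot, \iota_\odot, \Delta_\shuffle,\epsilon_\shuffle)$ is the algebraic dual of $(\bfW,\nabla_\shuffle,\iota_\shuffle,\Delta_\odot,\epsilon_\odot)$ via the form \eqref{line-form}, leaving the verification as an exercise. You have carried out that exercise correctly, including the key point in your identity (3) that the disjoint alphabets force shuffle multiplicity $0$ or $1$ matching the terms of \eqref{delta-shuffle-eq}.
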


\begin{proof}
It is  a straightforward exercise
to check that $(\hat\bfW, \nabla_\odot, \iota_\odot, \Delta_\shuffle,\epsilon_\shuffle)$ is the algebraic dual
of the bialgebra $(\bfW,\nabla_\shuffle,\iota_\shuffle,\Delta_\odot,\epsilon_\odot)$ via the bilinear form \eqref{line-form}.
\end{proof}

%
%
%

Define $\hP$ to be the completion of the vector space of packed words $\bfP$ with respect to the basis $\Packed$.
The natural pairing $\bfP \times \hP \to \kk$ gives $\hP$
the structure of a linearly compact  Hopf algebra dual to $(\bfP,\nabla_\shuffle,\iota_\shuffle,\Delta_\odot,\epsilon_\odot)$,
which one can realize as a sub-bialgebra of $(\hat\bfW, \nabla_\odot, \iota_\odot, \Delta_\shuffle,\epsilon_\shuffle)$.
This object is not of much relevance to our discussion, however.

On the other hand, 
since $\bfP$ has finite graded dimension when graded by word length,
 the maps $\nabla_\shuffle$, $\iota_\shuffle$, $\Delta_\odot$ and $\epsilon_\odot$ from \eqref{packed-prod-eq}
have continuous linear extensions to maps between $\hP$, $\hP\htimes\hP$, and $\kk$ as appropriate, 
and the following holds in view of Example~\ref{ext-ex}:

\begin{proposition}\label{packed-hat-prop}
$(\hP,\nabla_\shuffle,\iota_\shuffle,\Delta_\odot,\epsilon_\odot)$ is a linearly compact Hopf algebra.
\end{proposition}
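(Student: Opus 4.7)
The plan is to reduce the claim to Example~\ref{ext-ex}, which states that any graded Hopf algebra of finite graded dimension lifts to a linearly compact Hopf algebra via the continuous extensions of its structure maps. Proposition~\ref{packed-prop} already provides the graded Hopf algebra $(\bfP, \nabla_\shuffle, \iota_\shuffle, \Delta_\odot, \epsilon_\odot)$ with respect to grading by word length, and the formulas \eqref{packed-prod-eq} describe its structure maps on the packed-word basis. So the claim amounts to verifying the finiteness hypothesis of Example~\ref{ext-ex} and then unwinding what it gives.

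The one hypothesis of Example~\ref{ext-ex} to check is that each graded component $\bfP_n$ is finite-dimensional. This I would verify directly: the packed words of length $n$ are in bijection with the surjections $[n] \twoheadrightarrow [m]$ for $0 \leq m \leq n$, and hence their count is the $n$-th ordered Bell (Fubini) number, which is finite. Given this, Example~\ref{ext-ex} immediately furnishes $\hP = \prod_{n\in\NN} \bfP_n$ with the structure of a linearly compact Hopf algebra whose structure maps restrict on the dense subspace $\bfP \subset \hP$ to those of Proposition~\ref{packed-prop}, so I simply transport the naming of these extensions and call them again $\nabla_\shuffle$, $\iota_\shuffle$, $\Delta_\odot$, $\epsilon_\odot$.

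The main (and only mild) obstacle is to see that these continuous extensions are well-defined maps into the intended codomains, i.e., that for each packed word $w$ of length $n$ only finitely many pairs $(u,v) \in \Packed \times \Packed$ contribute to its coefficient in $\nabla_\shuffle(u \otimes v)$, and similarly that $\Delta_\odot(w)$ is a finite sum. Both are clear from \eqref{packed-prod-eq}: the product of $u$ and $v$ lies in degree $|u|+|v|$, so only pairs with $|u|+|v|=n$ contribute; the coproduct of $w$ has exactly $n+1$ terms. Thus Lemma~\ref{continuous-lem} applies in each case and the extensions exist as required. The bialgebra and antipode axioms, being diagrammatic identities among the structure maps, transfer from $\bfP$ to $\hP$ because the inclusions $\bfP \otimes \bfP \subset \bfP \htimes \bfP \subset \hP \htimes \hP$ are dense and the extensions are continuous, so no further computation is needed.
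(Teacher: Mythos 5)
Your proposal is correct and follows essentially the same route as the paper: the paper likewise notes that $\bfP$ has finite graded dimension when graded by word length, extends the structure maps from \eqref{packed-prod-eq} continuously, and invokes Example~\ref{ext-ex} to conclude. Your verification that the graded components are finite-dimensional (via the count of packed words of each length) and the density argument for transferring the axioms just make explicit what the paper leaves implicit.
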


Let $\hat\bfW_n$ be the completion of $\bfW_n$ with respect to $\Words_n$.
Each subspace $\bfW_n$ for $n \in \NN$ is a sub-coalgebra of $(\bfW,\Delta_\odot, \epsilon_\odot)$ of finite graded dimension,
so  $\Delta_{\odot}$ and $\epsilon_\odot$ extend
to continuous linear maps $\hat\bfW_n \to \hat\bfW_n \htimes \hat\bfW_n$ and $\hat\bfW_n \to \kk$,
and the following similarly holds:

\begin{proposition}\label{wn-prop}
For each $n \in \NN$, $(\hat\bfW_n,\Delta_\odot,\epsilon_\odot)$ is a linearly compact  coalgebra.
\end{proposition}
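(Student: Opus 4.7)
The plan is to invoke Example~\ref{ext-ex} after checking that each $\bfW_n$ is a graded sub-coalgebra of $(\bfW,\Delta_\odot,\epsilon_\odot)$ of finite graded dimension. The whole argument then reduces to a routine dualization/extension.

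First, I would verify that $\bfW_n$ is closed under $\Delta_\odot$ and $\epsilon_\odot$. Looking at \eqref{odot-maps}, for any $[w,n] \in \Words_n$ with $w = w_1 w_2 \cdots w_m$, the coproduct is
\[
\Delta_\odot([w,n]) = \sum_{i=0}^m [w_1\cdots w_i,n] \otimes [w_{i+1}\cdots w_m, n].
\]
Each tensor factor preserves the second coordinate $n$ and involves a subword of $w$, so $\max$ of each factor is still at most $n$. Hence $\Delta_\odot(\bfW_n) \subset \bfW_n \otimes \bfW_n$. The counit $\epsilon_\odot$ obviously restricts to $\bfW_n\to\kk$. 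Thus $(\bfW_n, \Delta_\odot, \epsilon_\odot)$ is a sub-coalgebra of $(\bfW,\Delta_\odot,\epsilon_\odot)$.

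Next, I would grade $\bfW_n$ by word length, writing $\bfW_n = \bigoplus_{k \in \NN} (\bfW_n)_k$, where $(\bfW_n)_k$ is spanned by the pairs $[w,n]$ with $w$ a word of length $k$ and letters in $[n]$. Since there are exactly $n^k$ such words, each homogeneous component $(\bfW_n)_k$ is finite-dimensional. The coproduct sends length-$m$ elements into $\bigoplus_{i+j=m} (\bfW_n)_i \otimes (\bfW_n)_j$ and the counit annihilates everything of positive length, so $\bfW_n$ is a graded coalgebra of finite graded dimension.

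Now I would apply Example~\ref{ext-ex} directly: the completion $\hat\bfW_n = \prod_{k\in \NN} (\bfW_n)_k$ inherits a unique structure of linearly compact coalgebra such that $\Delta_\odot$ and $\epsilon_\odot$ extend to continuous linear maps $\hat\bfW_n \to \hat\bfW_n \htimes \hat\bfW_n$ and $\hat\bfW_n \to \kk$. The comultiplication coassociativity and the counit axioms for the extended maps follow by continuity from their validity on the dense subspace $\bfW_n \subset \hat\bfW_n$, since both sides of each coalgebra axiom are continuous linear maps agreeing on a dense subspace of a Hausdorff space.

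There is no real obstacle: the only potentially delicate point is confirming that $\Delta_\odot$ genuinely has a continuous extension, but this is precisely the content of Example~\ref{ext-ex}, which in turn amounts to the observation that for each fixed pair of basis elements $[u,n]\otimes[v,n]$ in $\hat\bfW_n \htimes \hat\bfW_n$, only finitely many $[w,n] \in \Words_n$ contribute a nonzero coefficient to that pair in $\Delta_\odot([w,n])$ (namely, the unique concatenation $w = uv$), which is the criterion given by Lemma~\ref{continuous-lem}.
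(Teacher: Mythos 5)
Your proposal is correct and follows essentially the same route as the paper: observe that $\bfW_n$ is a sub-coalgebra of $(\bfW,\Delta_\odot,\epsilon_\odot)$ of finite graded dimension (graded by word length) and then invoke Example~\ref{ext-ex} to extend $\Delta_\odot$ and $\epsilon_\odot$ continuously to the completion $\hat\bfW_n$. The extra details you supply (counting $n^k$ words per degree and checking the continuity criterion of Lemma~\ref{continuous-lem} via the unique concatenation $w=uv$) are accurate and simply make explicit what the paper leaves implicit.
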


Since $\bfW$ does not have finite graded dimension,
the bialgebra structure $(\bfW, \nabla_\shuffle, \iota_\shuffle, \Delta_\odot, \epsilon_\odot)$
does not extend to $\hat\bfW$.
In particular, the counit $\epsilon_\odot$ 
cannot be evaluated
at $\sum_{n \in \NN} [\emptyset,n] \in \hat\bfW$.
Nevertheless, there is a sense in which $\nabla_\shuffle$ and $\Delta_\odot$ can be interpreted as compatible morphisms
$\hat\bfW \htimes \hat\bfW \to \hat\bfW$  and $\hat\bfW \to \hat\bfW \htimes\hat\bfW$. This is the main theme of the next section.

\section{Species coalgebroids}\label{species-sect}

Let $\Mon(\sC)$, $\Comon(\sC)$, and $\Bimon(\sC)$ be the categories of monoids, comonoids, and bimonoids in a symmetric monoidal category $\sC$.
Let $\FB$ denote the category of finite sets with bijections as morphisms.
A \emph{$\sC$-species} is a functor $ \FB \to \sC$.
Such functors form a category, denoted $\sC$-$\Sp$, with natural transformations as morphisms.
For more background on species, see \cite[Chapter 8]{AguiarMahajan}.

When $\bF$ is a $\sC$-species and $S$ is a finite set and $\sigma : S\to T$ is a bijection,
we write $\bF[S]$ for the corresponding object in $\sC$ and $\bF[\sigma]$ for the corresponding morphism $\bF[S] \to \bF[T]$, which is necessarily an isomorphism.
When $\eta  : \bF \to \bG$ is a natural transformation and $S$ is a finite set, we write $\eta_S$ for the corresponding morphism $\bF[S] \to \bG[S]$.
We refer to $\bF[S]$ and $\eta_S$ as the \emph{$S$-component} of $\bF$ and $\eta$.
If $S$ is clear from context and $x \in \bF[S]$, then we may write $\eta(x)$ instead of $\eta_S(x) $ for the corresponding element of $\bG[S]$.
A \emph{subspecies} of a $\sC$-species $\bG$ is a $\sC$-species $\bF$ with $\bF[S] \subset \bG[S]$ for all finite sets $S$
and $\bF[\sigma] = \bG[\sigma]|_{\bF[S]}$ for all bijections $\sigma : S \to T$.
We write $\bF \subset \bG$ to indicate that $\bF$ is a subspecies of $\bG$.

With these conventions,
a \emph{linearly compact coalgebra species} is a functor $\cV : \FB \to \Comon(\whkVec)$.
Suppose $\cU$, $\cU'$, $\cV$, and $\cV'$ are linearly compact coalgebra species and $\alpha : \cU \to \cU'$ and $\beta : \cV \to \cV'$ are natural transformations.
Define $\cU \cdot \cV : \FB \to \Comon(\whkVec)$ and $\alpha\cdot \beta : \cU\cdot \cV \to \cU'\cdot \cV'$ by
\be\label{cauchy-eq} (\cU \cdot \cV)[I] = \bigoplus_{S\sqcup T = I} \cU[S]\htimes \cV[T]
\qquand
 (\alpha \cdot \beta)_I = \bigoplus_{S\sqcup T=I} \alpha_S\htimes \beta_T
\ee
for each finite set $I$, where the sums are over all $2^{|I|}$ ways of writing $I$ as a union of two disjoint sets.
Define $(\cU \cdot \cV)[\sigma] : (\cU\cdot \cV)[I] \to (\cU\cdot \cV)[J]$ similarly when $\sigma :I \to J$ is a bijection. 
The category of linearly compact coalgebra species
 is symmetric monoidal 
with respect to this operation, called the \emph{Cauchy product} in \cite{AguiarMahajan},
with unit object given by the species $\One : \NN \to \Comon(\whkVec)$ that has $\One[\varnothing] = \kk$ and $\One[S] = 0$ for all nonempty finite sets $S$.
When $\nabla :   \cV \cdot \cV \to \cV$ is a natural transformation and $I =S\sqcup T$,
  we write $\nabla_{ST} :   \cV[S] \htimes \cV[T] \to \cV[I]$ for the composition of $\nabla_{I} :  (\cV \cdot\cV)[I]  \to \cV[I]$ with the inclusion
 $\cV[S] \htimes \cV[T] \to (\cV \cdot\cV)[I] $.

\begin{definition}\label{extended-def}
A \emph{species coalgebroid} is a monoid in the category of linearly compact coalgebra species.
Explicitly, suppose $\cV : \FB \to \Comon(\whkVec)$ is a functor. Write $\Delta_I$ and $\epsilon_I$ for the coproduct and counit of $\cV[I]$ and
let $\Delta =(\Delta_I)$ and $\epsilon = (\epsilon_I)$ denote the corresponding families of linear maps.
Suppose $\nabla :  \cV \cdot \cV \to \cV$ and $\iota : \One \to \cV$ are natural transformations.
Then $(\cV,\nabla,\iota,\Delta,\epsilon)$ is a species coalgebroid if and only if the following conditions hold:
\ben
\item[(a)] For all pairwise disjoint finite sets  $S$, $T$, and $U$, the following diagrams commute:
 \[
{\footnotesize
\begin{diagram}[small]
 \cV[S]\htimes \cV[T] \htimes \cV[U] & \rTo^{\ \ \nabla_{ST}\htimes \id\ \ }& \cV[S\sqcup T] \htimes \cV[U]\\
\dTo^{\id \htimes \nabla_{TU}}  && \dTo_{\nabla_{S\sqcup T,U}} \\
\cV[S] \htimes \cV[T\sqcup U] & \rTo^{\nabla_{S,T\sqcup U}} & \cV[S \sqcup T \sqcup U]
\end{diagram}
}
\]
\smallskip
\[
{\footnotesize
\begin{diagram}[small]
\kk\htimes \cV[S] & \rTo^{\ \ \iota_\varnothing\htimes\id\ \ } & \cV[\varnothing]\htimes \cV[S]  \\
 & \rdTo^{\cong} & \dTo^{\nabla_{\varnothing S}}  \\
 & & \cV[S]
 \end{diagram}
 \qquad\qquad
 \begin{diagram}[small]
 \cV[S]\htimes \cV[\varnothing] & \lTo^{\ \ \id\htimes \iota_\varnothing\ \ } & \cV[S] \htimes \kk
  \\ \dTo^{\nabla_{S\varnothing}} &  \ldTo^{\cong}\\
 \cV[S]
 \end{diagram}
}
\]

\item[(b)] For all disjoint finite sets $S$ and $T$, the following diagrams commute:
\[
{\footnotesize
\begin{diagram}[small]
\kk &   &  \rTo^{\id} & &  \kk \\
&  \rdTo^{\iota_\varnothing} & & \ruTo^{\epsilon_\varnothing} \\
 & & \cV[\varnothing]
\end{diagram}
\qquad\qquad
\begin{diagram}[small]
\kk & \rTo^{\iota_\varnothing} & \cV[\varnothing]
\\ 
\dTo^{\cong} && \dTo_{\Delta_{\varnothing}} \\
\kk\htimes \kk & \rTo^{\ \ \iota_\varnothing \htimes \iota_\varnothing\ \ } & \cV[\varnothing] \htimes \cV[\varnothing]
\end{diagram}
\qquad\qquad
\begin{diagram}[small]
\cV[S] \htimes \cV[T]& \rTo^{\ \ \epsilon_S\htimes \epsilon_T\ \ } & \kk \htimes \kk
\\ 
\dTo^{\nabla_{ST}} && \dTo_{\cong} \\
\cV[S\sqcup T] & \rTo^{\epsilon_{S\sqcup T}} & \kk
\end{diagram}
}
\]

\item[(c)] For all disjoint finite sets $S$ and $T$, the following diagram commutes:
\[
{\footnotesize
 \begin{diagram}[small]
 \cV[S]\htimes \cV[T]&\rTo^{\nabla_{ST}\ \ \ \ } & \cV[S\sqcup T] & \rTo^{\ \ \ \ \Delta_{S\sqcup T}} & \cV[S\sqcup T] \htimes \cV[S\sqcup T] \\
\dTo^{\Delta_{S} \htimes \Delta_{T}}  & &&& \uTo_{\nabla_{ST} \htimes \nabla_{ST}} \\
\cV[S] \htimes  \cV[S] \htimes \cV[T] \htimes \cV[T]  && \rTo^{\id\htimes \hat\beta \htimes \id} &&
 \cV[S] \htimes \cV[T] \htimes \cV[S]  \htimes \cV[T]
\end{diagram}
}
\]
\een
We refer to $\nabla : \cV \cdot \cV \to \cV$ and $\iota : \One \to \cV$ as the \emph{product} and \emph{unit} of $\cV$,
and to the families of maps $\Delta =(\Delta_I)$ and $\epsilon = (\epsilon_I)$ as the \emph{coproduct} and \emph{counit} of $\cV$.
Species coalgebroids form a category, which we denote by $\ExtBim$, whose morphisms are the natural transformations 
between $\Comon(\whkVec)$-species that commute with the product and unit morphisms.
\end{definition}

If
 $(\cV,\nabla,\iota,\Delta,\epsilon) \in \ExtBim$ is a species coalgebroid,
then a subspecies $\cH\subset \cV$ is a \emph{sub-coalgebroid}
when  
$\Delta_S(\cH[S]) \subset \cH[S] \htimes \cH[S]$  for each finite set $S$
and the morphisms
 $\nabla$ and $\iota$ restrict to 
natural transformations $\cH \cdot \cH \to \cH$ and $\One \to \cH$.
When these conditions hold,
we have
$(\cH,\nabla,\iota,\Delta,\epsilon) \in \ExtBim$.

\begin{remark}
If needed, one can introduce a sequence of definitions dual to those above.
The natural dual of a linearly compact coalgebra species is an \emph{algebra species}, i.e., a functor $ \FB \to \Mon(\kVec)$.
Such functors form a symmetric monoidal category with unit object $\One$, relative to the Cauchy product defined just as in \eqref{cauchy-eq} 
but with the
completed tensor product $\htimes$ replaced by
$\otimes$.
The natural dual of a species coalgebroid is then a comonoid in the category of algebra species.
\end{remark}

Species coalgebroids generalize linearly compact bialgebras
since the latter are monoids in the category of linearly compact coalgebras.
We highlight three functors to or from $\ExtBim$:
\ben
\item[(i)] There is  a natural ``forgetful'' functor
\be
\F : \ExtBim \to \Bimon(\whkVec)
\ee
with
$\F(\B) = (\cV[\varnothing], \nabla_{\varnothing,\varnothing},\iota_\varnothing, \Delta_\varnothing, \epsilon_\varnothing)$
 for each $\B = (\cV,\nabla,\iota,\Delta,\epsilon) \in \ExtBim$
 and with  $\F(\eta) = \eta_\varnothing$
for each morphism $\eta : \B \to \B'$ in $\ExtBim$.

\item[(ii)] For $V \in \kVec$, let $\bfE(V) : \FB \to \kVec$ be the species with
$\bfE(V)[S] = V$
and 
$\bfE(V)[\sigma] = \id_V$ for all  finite sets $S$ and bijections $\sigma : S \to T$.
For any linear map $\phi : V\to V'$,
let $\bfE(\phi) : \bfE(V) \to \bfE(V')$
be the natural transformation with $\bfE(\phi)_S = \phi$ for all finite sets $S$.
This gives a functor
\be
\label{nn-eq}
\bfE : \kVec \to \kVec\text{-}\Sp.
\ee
If
$B = (V,\mu,i,\delta,e) $ is a linearly compact bialgebra, then 
define 
$
\bfE(B) :=  (\bfE(V),\nabla,\iota,\Delta,\epsilon)
$
to be the species coalgebroid
in which $\nabla_{ST} = \mu$ , $\iota_I = i$, $\Delta_I = \delta$, and $\epsilon_I = e$ for all disjoint finite sets $S$, $T$, and $I$.
This makes $\bfE$ into a functor $\Bimon(\whkVec) \to \ExtBim$.

\item[(iii)] Suppose $\B = (\cV,\nabla,\iota,\Delta,\epsilon) \in \ExtBim$ is \emph{finite-dimensional} 
in that
 $\dim_\kk \cV[S] < \infty$ for all finite sets $S$.
For each $n \in \NN$, the symmetric group $S_n$ 
acts as a group of coalgebra automorphisms on $\cV[n] := \cV[\{1,2,\dots,n\}]$
via $x \mapsto \cV[\sigma](x)$ for $x \in \cV[n]$ and $\sigma \in S_n$.
The subspace $\bfI_n\subset \cV[n]$ spanned by all differences $x - \cV[\sigma](x)$
for $x \in \cV[n]$ and $\sigma \in S_n$
is a coideal
and we denote the corresponding quotient coalgebra by  $\cV[n]_{S_n} = \cV[n] / \bfI_n.$
Reuse $\Delta_n$ and $\epsilon_n$ to denote the coproduct and counit of $\cV[n]_{S_n}$.
Consider the compositition
\[   ({\cV \cdot \cV})[n]  =  \bigoplus_{S\sqcup T=[n]} \cV[S]\otimes \cV[T]  \to \bigoplus_{i+j=n} \cV[i]\otimes \cV[j] \to   \bigoplus_{i+j=n} \cV[i]_{S_i} \otimes \cV[j]_{S_j}\]
where the second arrow is the natural quotient map and the first arrow is the direct sum  $ \bigoplus_{S\sqcup T=[n]}\cV[\sigma_S] \otimes \cV[\sigma_T]$
with $\sigma_S$ denoting the order-preserving bijection $S \to [|S|]$ and $\sigma_T$ defined likewise.
As explained in \cite[\S15.1.1]{AguiarMahajan} (see in particular the proof of \cite[Proposition 15.2]{AguiarMahajan}),
this map
descends to an isomorphism   
\[(\cV\cdot\cV)[n]_{S_n} \xrightarrow{\sim} \bigoplus_{i+j=n} \cV[i]_{S_i} \otimes \cV[j]_{S_j}.\]
The $[n]$-component of $\nabla$ descends to a linear map 
\[\nabla_n : (\cV \cdot \cV)[n]_{S_n} \to \cV[n]_{S_n}.\]
The space
$V=\bigoplus_{n \in \NN} \cV[n]_{S_n}$ is a $\kk$-bialgebra with 
product
\[
V \otimes V = \bigoplus_{n \in \NN}  \bigoplus_{i+j=n} \cV[i]_{S_i} \otimes \cV[j]_{S_j} \xrightarrow{\sim} \bigoplus_{n \in\NN} (\cV \cdot \cV)[n]_{S_n} \xrightarrow{\bigoplus_{n \in \NN} \nabla_n} V,\]
and  coproduct
\[
V \xrightarrow{\bigoplus_{n \in \NN} \Delta_n} \bigoplus_{n \in \NN} (\cV[n]_{S_n} \otimes \cV[n]_{S_n}) \hookrightarrow V\otimes V,
\]
along with unit  $ \bigoplus_{n \in \NN} \iota_{[n]} = \iota_\varnothing$ and counit $\bigoplus_{n \in \NN} \epsilon_n$.
Let $\Fock(\B)$ denote this bialgebra.
When
$\eta : \B \to \B'$ is a morphism between finite-dimensional coalgebroids,
the direct sum $ \bigoplus_{n \in \NN} \eta_{[n]}$ descends to a map
$ \Fock(\B) \to \Fock(\B')$, denoted $\Fock(\eta)$.
This makes $\Fock$ into a functor
\be\label{sigma-eq} \Fock : \FDExtBim \to \Bimon(\kVec)\ee
where $\FDExtBim$ is the full subcategory of finite-dimensional species coalgebroids. 
The functor $\Fock$ is similar to the \emph{bosonic Fock functor} defined 
in \cite[Chapter 15]{AguiarMahajan}.
\een

We conclude this section by constructing what will be our fundamental example of Definition~\ref{extended-def}.
Fix a set $S$ of size $n$.
For each bijection $\lambda: [n] \to S$, let
$\Words_\lambda$ be the set of pairs $[w,\lambda]$ where $w$ is a word with $\max(w) \leq n$.
Define $\hat\bfW_\lambda$ to be the linearly compact $\kk$-vector space with $\Words_\lambda$ as a pseudobasis.
Write $\L[S]$ for the set of bijections $[n] \to S$ and let
\[\sW[S] = \bigoplus_{\lambda \in \L[S] }\hat\bfW_\lambda \in \whkVec.
\]
For each bijection $\sigma : S \to T$,
define $\sW[\sigma]$ to be the continuous linear map $\sW[S] \to \sW[T]$ with 
\be\label{spsigma-eq}
\sW[\sigma]([w,\lambda]) = [w, \sigma \circ \lambda]\qquad\text{for }[w,\lambda] \in \Words_\lambda.
\ee
These definitions make 
$ \sW $
into a functor $\FB \to \whkVec$.

Identify $[w,n] \in \Words_n$ with the element  $[w,\lambda] \in \Words_{\lambda}$ 
where $\lambda$ is the identity map $[n] \to [n]$
and in this way view $\hat\bfW_n$ as a subspace of $\sW[n] := \sW[\{1,2,\dots,n\}]$.
We extend 
$\Delta_{\odot} : \hat\bfW_n \to \hat\bfW_n \htimes \hat\bfW_n$ and $\epsilon_\odot : \hat\bfW_n \to \kk$ from \eqref{odot-maps}
to continuous linear maps
$\Delta_\odot : \sW[S] \to \sW[S]\htimes \sW[S]
$
and
$
\epsilon_\odot: \sW[S] \to \kk$ by requiring that
for each subspace $\hat\bfW_\lambda \subset \sW[S]$ we have
\be\label{spdelta-eq}
\Delta_\odot|_{\hat\bfW_\lambda} = 
( \sW[\lambda]\htimes \sW[\lambda]) \circ \Delta_\odot \circ \sW[\lambda^{-1}]|_{\hat\bfW_\lambda}
\quand
\epsilon_{\odot}|_{\hat\bfW_\lambda} = \epsilon_\odot
\circ
\sW[\lambda^{-1}]|_{\hat\bfW_\lambda}.
\ee
This means that if $[w,\lambda] \in  \Words_{\lambda}$ where $w=w_1w_2\cdots w_m$ has $m$ letters, then
\[ \Delta_{\odot} ( [w,\lambda]) = \sum_{i=0}^m [w_1\cdots w_i,\lambda] \otimes  [w_{i+1}\cdots w_m,\lambda]
\quand \epsilon_{\odot} ( [w,\lambda]) = \begin{cases} 1 & \text{if $m=0$} \\ 
0&\text{if $m>0$}.
\end{cases}\]
By Proposition~\ref{wn-prop}, $\sW$ defines a linearly compact coalgebra species $\FB \to \Comon(\whkVec)$.

Given 
disjoint finite sets $S$ and $T$ with $n=|S|$ and $m=|T|$ and 
bijections $(\lambda,\mu) \in \L[S] \times \L[T] $,
let $\lambda \oplus \mu$ denote the bijection $  [n+m] \to S\sqcup T$ with $i \mapsto \lambda(i)$ for $i \in [n]$ and $n+j \mapsto  \mu(j)$ for $j \in [m]$.
Write $\nabla_\shuffle : \sW \cdot \sW \to \sW$
for the natural transformation whose $I$-component $(\sW\cdot \sW)[I] \to \sW[I]$ is the direct sum, over all disjoint decompositions $I = S\sqcup T$ and $(\lambda,\mu) \in \L[S] \times \L[T] $,
of the maps
\be\label{spnabla-eq} \sW[\lambda \oplus \mu] \circ \nabla_\shuffle \circ \(\sW[\lambda^{-1}] \htimes \sW[\mu^{-1}]\) : \hat\bfW_\lambda \htimes \hat\bfW_\mu \to \hat\bfW_{\lambda\oplus \mu} \ee
with $\nabla_\shuffle : \hat\bfW_n \htimes \hat\bfW_m \to \hat\bfW_{n+m}$ as in \eqref{nabla-eq}.
This means that if $[v,\lambda] \in  \Words_{\lambda}$ and $[w,\mu] \in  \Words_{\mu}$ then
\[ \nabla_\shuffle([v,\lambda],[w,\mu]) = [v \shuffle (w\uparrow m), \lambda\oplus \mu]\]
where $m$ is the size of the domain of $\lambda$.
Finally, let $\iota_\shuffle : \One \to \sW$ be the natural transformation whose nontrivial component is the linear map $\One[\varnothing]=\kk \to \sW[\varnothing]$
with $1 \mapsto  [\emptyset,\id_\varnothing]$.

\begin{remark}
We can describe the maps  \eqref{spdelta-eq} and \eqref{spnabla-eq} more concretely.
Let $S$ be a finite set of size $n$.
An \emph{$S$-word} is a finite sequence $a=a_1a_2\cdots a_l$ with $a_i \in S$.
Given a bijection $\lambda : [n] \to S$,
define $(a,\lambda) = [w,\lambda] \in \Words_\lambda$ where $w=w_1w_2\cdots w_l$ is the word with 
$\lambda(w) := \lambda(w_1)\lambda(w_2)\cdots \lambda(w_l) =a$. 
Equation \eqref{spsigma-eq} is then
$ \sW[\sigma]((a,\lambda)) = (\sigma(a), \sigma\circ \lambda)
$
and the formulas in \eqref{spdelta-eq} become
\[
\Delta_\odot((a,\lambda)) = \sum_{i=0}^l (a_1\cdots a_i,\lambda) \otimes (a_{i+1}\dots a_l,\lambda)
\quand
\epsilon_{\odot}((a,\lambda)) = \begin{cases} 1 & \text{if } a = \emptyset \\ 0&\text{otherwise.}\end{cases}
\]
If
$b$ is a $T$-word where $S\cap T = \varnothing$ and $\mu : [m] \to T$ is a bijection,
so that $(b,\mu) \in \Words_\mu$,
then \eqref{spnabla-eq} is the continuous linear map with
$
\nabla_\shuffle((a,\lambda)\otimes (b,\mu)) = (a \shuffle b, \lambda \oplus \mu)$
where we define $( c_1 w^1 + \dots + c_k w^k, \lambda) = c_1 (w^1,\lambda) + \dots + c_k (w^k,\lambda)$.
In this way, the product can be defined using the ordinary shuffle operation instead of the shifted shuffle in \eqref{nabla-eq}.
\end{remark}

With slight abuse of notation,
we reuse the symbols $\Delta_\odot$ and $\epsilon_\odot$ to denote the families of maps
$\sW[S] \xrightarrow{\Delta_\odot} \sW[S] \htimes \sW[S]$ and $\sW[S] \xrightarrow{\epsilon_\odot} \kk$
for all finite sets $S$.
The following then holds:

\begin{theorem}\label{co-ext-bia-prop}
$(\sW, \nabla_\shuffle,\iota_\shuffle,\Delta_\odot,\epsilon_\odot)$
is a species coalgebroid.
\end{theorem}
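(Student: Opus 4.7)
The plan is to reduce every diagram in Definition~\ref{extended-def} to the corresponding axiom for the bialgebra $(\bfW,\nabla_\shuffle,\iota_\shuffle,\Delta_\odot,\epsilon_\odot)$ from Theorem~\ref{w-thm}, exploiting the fact that each $\sW[S]$ is built from copies of $\hat\bfW_n$ transported via the isomorphisms $\sW[\lambda]$ for $\lambda \in \L[S]$.

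First I would verify that $\sW$ really is a linearly compact coalgebra species. Proposition~\ref{wn-prop} makes $(\hat\bfW_n,\Delta_\odot,\epsilon_\odot)$ into a linearly compact coalgebra. The formulas \eqref{spdelta-eq} assert that $\Delta_\odot$ on $\hat\bfW_\lambda$ is conjugated from $\hat\bfW_n$ by $\sW[\lambda]$, so each $\hat\bfW_\lambda$, and therefore the direct sum $\sW[S]$, inherits a linearly compact coalgebra structure. The naturality condition, that $\sW[\sigma]$ is a coalgebra morphism for every bijection $\sigma$, then follows from \eqref{spdelta-eq} together with the evident identity $\sW[\sigma\circ\lambda] = \sW[\sigma]\circ\sW[\lambda]$.

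Next I would check axioms (a), (b), (c) of Definition~\ref{extended-def} in turn. For each diagram, the strategy is to restrict to the subspaces $\hat\bfW_\lambda\htimes\hat\bfW_\mu\htimes\cdots$ indexed by choices of bijections on the given disjoint finite sets, and then use \eqref{spnabla-eq} to rewrite every arrow as the conjugate under $\sW[\lambda],\sW[\mu],\ldots$ of the corresponding map on $\hat\bfW_n$. For the associativity square in (a), the identity $(\lambda\oplus\mu)\oplus\nu = \lambda\oplus(\mu\oplus\nu)$ ensures that both compositions land in the same target $\hat\bfW_{\lambda\oplus\mu\oplus\nu}$, and their equality then follows by continuity from the associativity of $\nabla_\shuffle$ on the dense subspace $\bfW$, which is part of Theorem~\ref{w-thm}. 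The unit axioms in (b) are immediate from the definitions of $\iota_\shuffle$ and of $\epsilon_\odot$ on $\sW[\varnothing]=\hat\bfW_{\id_\varnothing}$.

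The main step is the bialgebra compatibility axiom (c). After transport through the bijections to the identity-bijection components, this becomes the statement that for $[v,m]\in\hat\bfW_m$ and $[w,n]\in\hat\bfW_n$,
\[ \Delta_\odot\bigl(\nabla_\shuffle([v,m]\otimes[w,n])\bigr) = (\nabla_\shuffle\htimes\nabla_\shuffle)\circ(\id\htimes\hat\beta\htimes\id)\bigl(\Delta_\odot([v,m])\otimes\Delta_\odot([w,n])\bigr). \]
For basis elements this is precisely the bialgebra compatibility in $(\bfW,\nabla_\shuffle,\iota_\shuffle,\Delta_\odot,\epsilon_\odot)$ provided by Theorem~\ref{w-thm}, and since $\bfW_m\otimes\bfW_n$ is dense in $\hat\bfW_m\htimes\hat\bfW_n$ while every map involved is continuous, the identity propagates to the completions. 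The hardest piece of bookkeeping will be verifying that, under the transport through $\sW[\lambda\oplus\mu]$, the braiding $\hat\beta$ on $\sW[S]\htimes\sW[T]$ matches the braiding on $\hat\bfW_m\htimes\hat\bfW_n$ after accounting for the shift built into $\lambda\oplus\mu$; this is routine once one unwinds \eqref{spnabla-eq}, but it is the only place where the \emph{shifted} nature of $\nabla_\shuffle$ genuinely enters the argument.
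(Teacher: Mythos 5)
Your proposal is correct and follows essentially the same route as the paper: the paper's proof likewise replaces each component $\cV[S]$ by $\hat\bfW_{|S|}$ in the diagrams of Definition~\ref{extended-def} and observes that, since every arrow is a continuous linear map, commutativity follows from the bialgebra axioms for $(\bfW,\nabla_\shuffle,\iota_\shuffle,\Delta_\odot,\epsilon_\odot)$ in Theorem~\ref{w-thm}. You simply spell out the transport through the bijections $\sW[\lambda]$ and the density-plus-continuity step that the paper compresses into a single sentence.
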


\begin{proof}
Modify the diagrams in Definition~\ref{extended-def} by replacing $\cV[\emptyset]$, $\cV[S]$, $\cV[T]$, and $\cV[U]$ by $\hat\bfW_0$, $\hat\bfW_{|S|}$, $\hat\bfW_{|T|}$, and $\hat\bfW_{|U|}$.
It suffices to show that these modified diagrams each commute.
Since all arrows in the diagrams are continuous linear maps, this follows by Theorem~\ref{w-thm}.
\end{proof}

\section{Word relations}\label{wr-sect}

Here, we characterize the relations on words 
that generate sub-objects of the bialgebra $\bfW$, the linearly compact Hopf algebra $\hat\bfW_\P$, or the species coalgebroid $\sW$. 
Our starting point is the following:

\begin{definition}
A \emph{word relation} is an equivalence relation $\sim$ on words
with the property that $v\sim w$ only if $v$ and $w$ share the same set of letters, not necessarily with the same multiplicities.
\end{definition}

\subsection{Algebraic relations}

Recall that 
 $w\uparrow m$ and $w\downarrow m$ are formed from $w$ by adding and subtracting $m$ to each letter.

\begin{definition}\label{alg-def} A word relation $\sim$ is \emph{algebraic} if
for all words $v$ and $w$, the following holds:
\ben

\item[(a)] If $v'$, $w'$ are words with $v\sim v'$ and $w\sim w'$, then $v  w\sim v'w'$.

\item[(b)] If $v\sim w$ and $I = \{m+1,m+2,\dots,n\}$ for $m,n\in \NN$, then $(v \cap I) \downarrow m \sim (w\cap I) \downarrow m$.

\een
\end{definition}

Condition (a) states that $\sim$ is a \emph{congruence} on the free monoid on $\PP$,
and is equivalent to requiring that $vxw \sim vyw$ whenever $v,w,x,y$ are words with $x\sim y$.
A typical example of an algebraic word relation is \emph{$K$-Knuth equivalence} \cite[Definition 5.3]{BuchSamuel},
 the strongest congruence with
$bac \sim bca$,
$acb \sim cab$,
$aba \sim bab$,
and
$a\sim aa$
for all integers $a<b<c$. For this relation, Definition~\ref{alg-def}(b) can be checked directly; see also Proposition~\ref{g-prop}.

Fix a word relation $\sim$ and suppose $v$ and $w$ are words.
We note two basic facts:

\begin{lemma}\label{bas1-lem}
If $\sim$ is algebraic and $v\sim w$, then $v\cap [n] \sim w\cap[n]$ for all $n \in \NN$.
\end{lemma}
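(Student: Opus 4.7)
The plan is to observe that Lemma~\ref{bas1-lem} is essentially the $m=0$ case of condition (b) in Definition~\ref{alg-def}. Since $\NN$ includes $0$, the definition allows us to take $m=0$ and $I=\{0{+}1,0{+}2,\dots,0{+}n\}=[n]$. For any word $u$, the shift $u \downarrow 0$ subtracts $0$ from each letter, so $u \downarrow 0 = u$; in particular $(v \cap [n])\downarrow 0 = v \cap [n]$ and $(w \cap [n])\downarrow 0 = w \cap [n]$. Applying Definition~\ref{alg-def}(b) to $v \sim w$ with this choice of $m$ and $I$ therefore yields $v \cap [n] \sim w \cap [n]$, as required.

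Since the argument is an immediate specialization of the hypothesis, there is no real obstacle. The only sanity check worth noting is that the word-relation compatibility is preserved: the letter set of $v \cap [n]$ equals $\mathrm{letters}(v) \cap [n]$, and similarly for $w$, so the two subwords share the same set of letters (as they must, since $v$ and $w$ do by assumption). Thus the conclusion is consistent with the defining requirement that $v \cap [n]$ and $w \cap [n]$ be equivalent only if they draw from the same alphabet.
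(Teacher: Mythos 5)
Your proof is correct and is exactly the paper's own argument: take $m=0$ in Definition~\ref{alg-def}(b), noting that $u\downarrow 0 = u$. The extra remark about the letter sets is fine but not needed.
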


\begin{proof}
Take $m=0$ in condition (b) in Definition~\ref{alg-def}.
\end{proof}

\begin{lemma}\label{bas2-lem}
If $\sim$ is algebraic and $v\uparrow m \sim w\uparrow m$ for some $m \in \NN$,
then $v\sim w$.
\end{lemma}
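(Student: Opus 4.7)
The plan is to apply condition (b) of Definition~\ref{alg-def} directly, choosing the interval $I$ large enough to capture all letters of the shifted words. First, observe that since $\sim$ is a word relation, $v\uparrow m \sim w\uparrow m$ implies that these two words share the same underlying set of letters, all of which lie in $\{m+1,m+2,\dots\}$. In particular, $\max(v\uparrow m) = \max(w\uparrow m)$; call this common value $n$ (or, if both words are empty, take $n = m$, in which case the conclusion is trivial).

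Next I would set $I = \{m+1,m+2,\dots,n\}$, so that every letter of $v\uparrow m$ and of $w\uparrow m$ belongs to $I$. Then $(v\uparrow m)\cap I = v\uparrow m$ and $(w\uparrow m)\cap I = w\uparrow m$, and subtracting $m$ from every letter gives
\[
\bigl((v\uparrow m)\cap I\bigr)\downarrow m = v
\qquand
\bigl((w\uparrow m)\cap I\bigr)\downarrow m = w.
\]
Applying Definition~\ref{alg-def}(b) to the hypothesis $v\uparrow m \sim w\uparrow m$ with this choice of $I$ therefore yields $v \sim w$, as required.

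There is essentially no obstacle here: the argument is a single invocation of axiom (b) once one recognizes that the shifted words have all letters lying inside a suitable $I$ of the form $\{m+1,\dots,n\}$. The only minor subtlety is the degenerate case when $v$ or $w$ is empty, which is handled by noting that the empty word is $\sim$-equivalent only to itself (since equivalent words share their letter sets).
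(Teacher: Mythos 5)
Your proof is correct and follows essentially the same route as the paper: apply Definition~\ref{alg-def}(b) with the interval $I=\{m+1,\dots,n\}$ containing all letters of the shifted words, so that intersecting and shifting down by $m$ recovers $v$ and $w$. The paper writes $I = m+\PP$ informally, but since words are finite this amounts to exactly your choice of a finite interval, and your handling of the empty-word case is a harmless extra precaution.
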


\begin{proof}
If $\tilde v :=v\uparrow m \sim w\uparrow m =:  \tilde w$,
then $v = (\tilde v \cap I) \downarrow m \sim (\tilde w \cap I) \downarrow m=w$ for $I = m + \PP$.
\end{proof}

Given a set $E$ of words with letters in $[n]$ and a bijection $\lambda : [n] \to S$,
define
\be\label{kappa-eq} \kappa_E^\lambda = \sum_{w \in E} [w,\lambda] \in \hat\bfW_\lambda \subset \sW[S].\ee
For each finite set $S$ of size $n \in \NN$,
let $\sSigma_S^{({\sim})}$ be
the set of elements of the form $\kappa_E^\lambda$
where  $E$ is a $\sim$-equivalence class of words with letters in $[n]$ 
and $\lambda$ is a bijection $[n] \to S$.
Let $\sS^{({\sim})}[S]$ be the linearly compact $\kk$-vector space with $\sSigma_S^{({\sim})}$ as a 
pseudobasis. The linearly compact topology on this space is the same as the subspace 
topology induced by $\sW[S]$.
Continuous maps to or from $\sW[S]$ therefore remain continuous when restricted to $\sS^{({\sim})}[S]$.
It follows that
\be\label{sS-eq} \sS^{(\sim)} : \FB \to \whkVec\ee
defines a subspecies of $\sW$.

\begin{theorem} \label{alg-thm}
Suppose $\sim$ is a word relation.
Then the species $\sS^{(\sim)} : \FB \to \whkVec$ is a sub-coalgebroid of
$(\sW,\nabla_\shuffle,\iota_\shuffle, \Delta_\odot,\epsilon_\odot)$
if and only if $\sim$ is algebraic.
\end{theorem}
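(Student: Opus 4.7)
The plan is to reduce the sub-coalgebroid condition to two independent statements---closure under the coproduct $\Delta_\odot$ and closure under the product $\nabla_\shuffle$---and match each to one of the two conditions in Definition~\ref{alg-def}. The unit $\iota_\shuffle$ is automatic, since $\iota_\shuffle(1)=[\emptyset,\id_\varnothing]=\kappa_{\{\emptyset\}}^{\id_\varnothing}\in \sS^{(\sim)}[\varnothing]$; the counit $\epsilon_\odot$ and the naturality with respect to bijections of $\FB$ restrict trivially from the ambient species $\sW$.

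For the coproduct, I will expand
\[\Delta_\odot(\kappa_E^\lambda) = \sum_{w\in E}\sum_{i=0}^{|w|} (w_1\cdots w_i,\lambda)\otimes (w_{i+1}\cdots w_{|w|},\lambda)\]
in the pseudobasis $\{(a,\lambda)\otimes (b,\lambda)\}$ of $\sW[S]\htimes \sW[S]$. Since each $w\in E$ decomposes uniquely as a concatenation of a prefix and suffix of prescribed lengths, the coefficient of $(a,\lambda)\otimes (b,\lambda)$ equals $1$ if $ab\in E$ and $0$ otherwise. This element lies in $\sS^{(\sim)}[S]\htimes \sS^{(\sim)}[S]$ precisely when this coefficient is constant on every pair of $\sim$-classes; equivalently, whenever $a\sim a'$ and $b\sim b'$ we have $ab\in E\Leftrightarrow a'b'\in E$. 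Taking $E=[ab]_\sim$ shows this is equivalent to the congruence condition~(a).

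For the product, an analogous expansion applies. Using the disjoint-alphabet shuffle formula from the remark preceding the theorem, with $n=|S|$ and $m=|T|$, the pseudobasis expansion of $\nabla_\shuffle(\kappa_E^\lambda\otimes \kappa_F^\mu)$ in $\sW[S\sqcup T]$ assigns coefficient $1$ to $(u,\lambda\oplus\mu)$ exactly when $u\cap[n]\in E$ and $(u\cap[n+1,n+m])\downarrow n\in F$ (and $0$ otherwise); there are no multiplicities precisely because the two underlying alphabets $[n]$ and $[n+1,n+m]$ are disjoint. This element lies in $\sS^{(\sim)}[S\sqcup T]$ if and only if the indicator is constant on each $\sim$-class of $u$. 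Taking $E$ and $F$ to be the classes of $u\cap[n]$ and $(u\cap[n+1,n+m])\downarrow n$ turns this into the statement that $u\sim u'$ implies both $u\cap[n]\sim u'\cap[n]$ and $(u\cap[n+1,n+m])\downarrow n \sim (u'\cap[n+1,n+m])\downarrow n$.

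The main bookkeeping step is to reconcile these two special identities with the general statement of condition~(b), which asserts the same conclusion for an arbitrary interval $I=[m+1,n]$ (in particular allowing $n<\max(u)$). I would handle this by first applying the second identity with $n_1=m$, $n_2=\max(u)-m$ to obtain $v:=(u\cap[m+1,\max(u)])\downarrow m \sim (u'\cap[m+1,\max(u)])\downarrow m =: v'$, then applying the first identity to $v\sim v'$ with the role of $n$ played by $n-m$. Since $v\cap[n-m]=(u\cap[m+1,n])\downarrow m$, this recovers $(u\cap I)\downarrow m\sim (u'\cap I)\downarrow m$ for any $m,n\in\NN$, matching (b) in full; the converse direction, that (a) and (b) jointly imply the sub-coalgebroid conditions, is immediate from the coefficient descriptions, since every coefficient is $0$ or $1$ so no convergence issue arises in the pseudobasis expansion.
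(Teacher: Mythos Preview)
Your proof is correct and follows essentially the same approach as the paper: matching condition~(a) of Definition~\ref{alg-def} to closure under $\Delta_\odot$ and condition~(b) to closure under $\nabla_\shuffle$, via the explicit $0/1$ coefficient descriptions. The paper's proof asserts without justification that condition~(b) is equivalent to the pair of special cases $v\cap[n]\sim w\cap[n]$ and $(v\cap(n+\PP))\downarrow n\sim(w\cap(n+\PP))\downarrow n$; your final bookkeeping paragraph supplies exactly this missing verification, so your argument is in fact slightly more complete than the paper's.
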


\begin{proof}
The definition of a word relation implies that the empty word $\emptyset$ belongs to its own $\sim$-equivalence class,
so the element $[\emptyset, \id_\varnothing]\in\sW[\varnothing]$ also belongs to $\sS^{(\sim)}[\varnothing]$.
This observation shows that $\iota_\shuffle$ always restricts to a natural transformation $\One \to \sS^{(\sim)}$.
By the comments after Definition~\ref{extended-def},
$\sS^{(\sim)}$ is a sub-coalgebroid of $\sW$
if and only if   
$\Delta_\odot(\sS^{(\sim)}[S]) \subset \sS^{(\sim)}[S] \htimes \sS^{(\sim)}[S]$  for each finite set $S$
and 
 $\nabla_\shuffle$  restricts to a
natural transformation $\sS^{(\sim)} \cdot \sS^{(\sim)} \to \sS^{(\sim)}$.

Condition (a) in Definition~\ref{alg-def}  holds
if and only if $\Delta_\odot(\kappa^\lambda_E) 
\in\sS^{({\sim})}[S] \htimes \sS^{({\sim})}[S]$ for each bijection $\lambda : [n] \to S$
and basis element $\kappa_E^\lambda \in \sSigma_S^{({\sim})}$.
Condition (b) in Definition~\ref{alg-def}
holds if and only if for all words $v,w$ with $v\sim w$ and all integers $n \in \NN$, 
we have both $v\cap [n] \sim w\cap [n]$ and $(v\cap I) \downarrow n \sim (w\cap I)\downarrow n$ for $I = n + \PP$.
By taking $E$ and $F$ to be the $\sim$-equivalence classes
of $v\cap [n]$ and $(v\cap I) \downarrow n$,
one checks that this property is necessary and sufficient to have $\nabla_\shuffle(\kappa_E^\lambda \otimes \kappa_F^\mu)\in\sS^{(\sim)}[S\sqcup T]$ for 
all disjoint finite sets $S$ and $T$ and basis elements
$\kappa_E^\lambda \in \sSigma^{(\sim)}_S$ and $\kappa_F^\mu \in \sSigma^{(\sim)}_T$.
This suffices to show that $\sS^{(\sim)}$ is a sub-coalgebroid if and only if $\sim$ is algebraic.
\end{proof}

Continue to let $\sim$ be a word relation.
For $n \in \NN$,
write $\kappa_E^n$ in place of $\kappa_E^{\lambda}$
when $\lambda$ is the identity map $[n] \to [n]$, and let 
 $\sSigma_n^{({\sim})} = \sSigma_{[n]}^{({\sim})} \cap \hat\bfW_n$ be the set of elements
$\kappa_E^n$ where $E$ ranges over all $\sim$-equivalence classes of words with letters in $[n]$.
Define
\be\label{bfSigma-eq}
\bfSigma_n^{({\sim})} = \kk \sSigma_n^{({\sim})}
\qquand\bfSigma^{({\sim})} = \bigoplus_{n \in \NN} \bfSigma_n^{({\sim})}.
\ee
The vector space $\bfSigma^{({\sim})}$ is a subspace of $\hat\bfW$ but 
is considered to be a discrete topological space.
We say that $\sim$ is of \emph{finite-type} if for each $n \in \NN$, 
the space $\bfSigma_n^{(\sim)}$ is finite-dimensional, or equivalently 
if the set of words with letters in $[n]$ decomposes as a union of  finitely many $\sim$-equivalence classes.

\begin{corollary} \label{alg-cor1}
If $\sim$ is algebraic and of finite-type then
$(\bfSigma^{(\sim)},\nabla_\shuffle,\iota_\shuffle, \Delta_\odot,\epsilon_\odot) \in \Bimon(\kVec)$.
\end{corollary}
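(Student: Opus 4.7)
The plan is to realize $\bfSigma^{(\sim)}$ as the image under the Fock-type functor $\Fock : \FDExtBim \to \Bimon(\kVec)$ from \eqref{sigma-eq} of the species coalgebroid $\sS^{(\sim)}$ produced by Theorem~\ref{alg-thm}. The merit of this route is that $\nabla_\shuffle$ and $\Delta_\odot$ are not directly defined on $\bfSigma^{(\sim)} \subset \hat\bfW$ (since elements $\kappa^n_E$ are generally infinite formal sums), so one cannot simply say $\bfSigma^{(\sim)}$ is a sub-bialgebra of $\bfW$; the coalgebroid level provides the only rigorous home for these operations.

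First I would verify that $\sS^{(\sim)}$ belongs to $\FDExtBim$. Since $\sim$ is algebraic, Theorem~\ref{alg-thm} makes $\sS^{(\sim)}$ a sub-coalgebroid of $(\sW, \nabla_\shuffle, \iota_\shuffle, \Delta_\odot, \epsilon_\odot)$, hence a species coalgebroid in its own right. The finite-type assumption then forces every component $\sS^{(\sim)}[S]$ to be finite-dimensional: its pseudobasis $\sSigma_S^{(\sim)}$ consists of elements $\kappa^\lambda_E$ indexed by $\lambda \in \L[S]$ (a set of size $n!$, with $n = |S|$) and by $\sim$-equivalence classes $E$ of words in letters $[n]$ (a set of cardinality $\dim_\kk \bfSigma_n^{(\sim)} < \infty$).

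Next I would apply $\Fock$ to obtain a $\kk$-bialgebra $\Fock(\sS^{(\sim)}) = \bigoplus_{n \in \NN} \sS^{(\sim)}[n]_{S_n}$ and identify it linearly with $\bfSigma^{(\sim)}$. From \eqref{spsigma-eq} one reads off $\sigma \cdot \kappa^\lambda_E = \kappa^{\sigma \circ \lambda}_E$, so the $S_n$-coideal $\bfI_n$ identifies $\kappa^\lambda_E$ with $\kappa^{\sigma \circ \lambda}_E$ and hence the coinvariants $\sS^{(\sim)}[n]_{S_n}$ have a basis consisting of one representative $\kappa^{\id_{[n]}}_E$ per $\sim$-class $E$. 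Sending $\kappa^{\id_{[n]}}_E$ to $\kappa^n_E$ gives a vector space isomorphism $\Fock(\sS^{(\sim)}) \xrightarrow{\sim} \bfSigma^{(\sim)}$.

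The final step, which is the main technical hurdle (though essentially bookkeeping given the machinery in place), is to check that this isomorphism transports the bialgebra structure induced by $\Fock$ onto $(\nabla_\shuffle, \iota_\shuffle, \Delta_\odot, \epsilon_\odot)$. Unwinding \eqref{spdelta-eq} shows that the induced $\Delta_n$ acts on the representative $\kappa^n_E$ by the formula of $\Delta_\odot$ in \eqref{odot-maps} applied termwise to $\kappa^n_E = \sum_{w \in E} [w,n]$; the finite-type assumption guarantees that this a priori infinite sum reorganizes into a finite element of $\bfSigma_n^{(\sim)} \otimes \bfSigma_n^{(\sim)}$. For the product, taking representatives of the pair $\kappa^i_E \otimes \kappa^j_F$ via the decomposition $[i+j] = [i] \sqcup \{i+1,\dots,i+j\}$ with $\lambda = \id_{[i]}$ and $\mu(k) = i+k$, formula \eqref{spnabla-eq} reduces to the shifted shuffle in \eqref{nabla-eq}, and finite-type again ensures that the resulting element of $\hat\bfW_{i+j}$ lies in $\bfSigma_{i+j}^{(\sim)}$. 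Matching the unit and counit is immediate from \eqref{co-units-eq} and \eqref{odot-maps} together with the fact that $\{\emptyset\}$ is always a $\sim$-class. Once these identifications are made, the bialgebra axioms transfer from $\Fock(\sS^{(\sim)})$ to $\bfSigma^{(\sim)}$ along the isomorphism, yielding $(\bfSigma^{(\sim)}, \nabla_\shuffle, \iota_\shuffle, \Delta_\odot, \epsilon_\odot) \in \Bimon(\kVec)$.
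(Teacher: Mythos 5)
Your proposal is correct and follows essentially the same route as the paper: the paper's proof is precisely the observation that, under the algebraic and finite-type hypotheses, $\sS^{(\sim)}$ is a finite-dimensional species coalgebroid whose image under the functor $\Fock$ of \eqref{sigma-eq} is isomorphic to $(\bfSigma^{(\sim)},\nabla_\shuffle,\iota_\shuffle,\Delta_\odot,\epsilon_\odot)$. Your write-up simply fills in the details (finite-dimensionality of the components, identification of the $S_n$-coinvariants with $\bfSigma_n^{(\sim)}$, and matching of the structure maps) that the paper leaves implicit.
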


\begin{proof}
If $\sim$ is algebraic and of finite-type, then the species coalgebroid $(\sS^{(\sim)} ,\nabla_\shuffle,\iota_\shuffle, \Delta_\odot,\epsilon_\odot)$
is finite-dimensional and its image under the functor \eqref{sigma-eq} is isomorphic to $(\bfSigma^{(\sim)},\nabla_\shuffle,\iota_\shuffle, \Delta_\odot,\epsilon_\odot)$.
\end{proof}

The relation $\sim$ is \emph{homogeneous} if $v\sim w$ implies that $v$ and $w$ have the same length.
When this holds, each equivalence class in $ \Words_n$ is finite so $\bfSigma^{(\sim)}\subset \bfW$,
and each $\kappa_E^n \in \sSigma_n^{(\sim)}$  is homogeneous.

\begin{theorem} \label{alg-cor}
Suppose $\sim$ is a homogeneous word relation.
The vector space $\bfSigma^{(\sim)}$ is a graded sub-bialgebra of $(\bfW,\nabla_\shuffle,\iota_\shuffle, \Delta_\odot,\epsilon_\odot) \in \Bimon(\kVec)$
if and only if $\sim$ is algebraic.
\end{theorem}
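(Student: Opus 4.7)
The plan is to bootstrap this theorem from the species-level result Theorem~\ref{alg-thm} by restricting and transporting structures via the functorial identifications in \eqref{spsigma-eq}, \eqref{spdelta-eq}, and \eqref{spnabla-eq}.

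First I would record a preliminary observation: because $\sim$ is homogeneous, every $\sim$-class $E$ of words with letters in $[n]$ consists of words of a common length $\ell$ drawn from the finite alphabet $[n]$, so $E$ is finite, $\kappa_E^n \in \bfW_n$, and $\bfSigma^{(\sim)} = \bigoplus_n \bfSigma_n^{(\sim)}$ is a genuine graded subspace of $\bfW$. Under the identification $[w,n] \leftrightarrow [w,\id_{[n]}]$, one has $\bfSigma_n^{(\sim)} \subset \hat\bfW_n \subset \sW[[n]]$ and the restriction of the species coproduct $\Delta_\odot$ to $\hat\bfW_n$ agrees with the bialgebra coproduct on $\bfW_n$; and for disjoint $S=[n]$ and $T=\{n{+}1,\dots,n{+}m\}$ with $\mu(j)=n+j$, the species product $\nabla_\shuffle: \hat\bfW_{\id_{[n]}} \htimes \hat\bfW_\mu \to \hat\bfW_{\id_{[n+m]}}$ coincides, after the standard identifications $[v,\id_{[n]}]\leftrightarrow[v,n]$ and $[w,\mu]\leftrightarrow[w,m]$, with the shifted shuffle product of $\bfW$ defined in \eqref{nabla-eq}.

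For the ``if'' direction, assume $\sim$ is algebraic. By Theorem~\ref{alg-thm}, $\sS^{(\sim)}$ is a sub-coalgebroid of $\sW$. Applying the coproduct-closure to $\kappa_E^n$ gives $\Delta_\odot(\kappa_E^n) \in \sS^{(\sim)}[[n]] \htimes \sS^{(\sim)}[[n]]$, but by \eqref{spdelta-eq} this element sits in $\hat\bfW_n \htimes \hat\bfW_n$, hence in the completion of $\bfSigma_n^{(\sim)} \otimes \bfSigma_n^{(\sim)}$; finiteness of $E$ makes this a finite sum, so it lies in the algebraic tensor product $\bfSigma_n^{(\sim)} \otimes \bfSigma_n^{(\sim)}$. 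The analogous argument with product-closure, applied to the identifications in the preceding paragraph, shows $\nabla_\shuffle(\kappa_E^n \otimes \kappa_F^m) \in \bfSigma_{n+m}^{(\sim)}$. The unit $[\emptyset,0] = \kappa_{\{\emptyset\}}^0$ clearly lies in $\bfSigma_0^{(\sim)}$, and $\epsilon_\odot$ restricts, since $\epsilon_\odot(\kappa_E^n)$ equals $1$ if $E=\{\emptyset\}$ and $0$ otherwise. This exhibits $\bfSigma^{(\sim)}$ as a graded sub-bialgebra.

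For the ``only if'' direction, the plan is to show that $\bfSigma^{(\sim)}$ being a graded sub-bialgebra forces $\sS^{(\sim)}$ to be a sub-coalgebroid of $\sW$, after which Theorem~\ref{alg-thm} delivers that $\sim$ is algebraic. Given any bijection $\lambda: [n] \to S$ and $\sim$-class $E$ with letters in $[n]$, formula \eqref{spdelta-eq} gives $\Delta_\odot(\kappa_E^\lambda) = (\sW[\lambda]\htimes \sW[\lambda])(\Delta_\odot(\kappa_E^n))$; by hypothesis the inner expression is a finite combination of $\kappa_A^n \otimes \kappa_B^n$, and the functorial transport sends this to a combination of $\kappa_A^\lambda \otimes \kappa_B^\lambda$ in $\sS^{(\sim)}[S] \otimes \sS^{(\sim)}[S]$. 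Similarly, for disjoint $S,T$ and bijections $\lambda \in \L[S]$, $\mu \in \L[T]$ with $|S|=n$ and $|T|=m$, formula \eqref{spnabla-eq} reduces $\nabla_\shuffle(\kappa_E^\lambda \otimes \kappa_F^\mu)$ to $\sW[\lambda\oplus\mu](\nabla_\shuffle(\kappa_E^n \otimes \kappa_F^m))$, and the sub-bialgebra hypothesis places this in $\sS^{(\sim)}[S \sqcup T]$. The unit condition is immediate from $[\emptyset,\id_\varnothing] = \kappa_{\{\emptyset\}}^{\id_\varnothing}$, and this completes the verification that $\sS^{(\sim)}$ is a sub-coalgebroid.

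I expect the main obstacle to be purely notational rather than mathematical: one must carefully verify that the three identifications $[w,n] \leftrightarrow [w,\id_{[n]}]$, $[w,m] \leftrightarrow [w,\mu]$ (for $\mu(j)=n+j$), and $[v\shuffle(w\uparrow n),n+m] \leftrightarrow [v \shuffle (w\uparrow n),\id_{[n+m]}]$ interact compatibly with \eqref{spnabla-eq} so that the species product $\nabla_\shuffle$ really does restrict to the shifted shuffle product on $\bfW$. Once this bookkeeping is in place, both directions reduce cleanly to invocations of Theorem~\ref{alg-thm}.
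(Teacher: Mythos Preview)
Your argument is correct, but it takes a different route from the paper. The paper's proof is a single sentence: ``The argument is the same as in the proof of Theorem~\ref{alg-thm}, \emph{mutatis mutandis}.'' In other words, the paper simply repeats the direct combinatorial verification of Theorem~\ref{alg-thm} in the simpler ordinary-bialgebra setting: one checks that condition~(a) of Definition~\ref{alg-def} is equivalent to $\Delta_\odot(\kappa_E^n)\in\bfSigma^{(\sim)}\otimes\bfSigma^{(\sim)}$ for all basis elements, and that condition~(b) is equivalent to $\nabla_\shuffle(\kappa_E^m\otimes\kappa_F^n)\in\bfSigma^{(\sim)}$ for all basis elements. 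No species machinery is invoked; the formulas \eqref{nabla-eq} and \eqref{odot-maps} are used directly.

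Your approach instead reduces the statement to Theorem~\ref{alg-thm} itself by transporting structures through the functorial identifications \eqref{spsigma-eq}--\eqref{spnabla-eq}. This is conceptually cleaner in that it avoids redoing the combinatorial check, and it makes explicit why the two theorems are really the same result in two categories. The cost is exactly what you flagged: the notational bookkeeping needed to verify that the species (co)product restricts correctly under $[w,n]\leftrightarrow[w,\id_{[n]}]$ and that the direct-sum decomposition $\sW[[n]]=\bigoplus_\lambda\hat\bfW_\lambda$ respects the inclusion $\bfSigma_n^{(\sim)}\subset\sS^{(\sim)}[[n]]\cap\hat\bfW_n$. The paper's approach trades this bookkeeping for a small amount of repeated argument.
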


\begin{proof}
The argument 
is the same as in the proof of Theorem~\ref{alg-thm}, \emph{mutatis mutandis}.
\end{proof}

A word of minimal length in its $\sim$-equivalence class is \emph{reduced}.
A pair $[w,n] \in \Words_n$ is \emph{reduced} with respect to $\sim$
if $w$ is reduced.
Let $\Words_\Reduced^{({\sim})}$ be the set of reduced elements in $\Words = \bigsqcup_{n \in \NN} \Words_n$.
Define $\sSigma_\Reduced^{(\sim)} $ to be the set of elements of the form
$ \kappa^n_{E} \in \bfW$
where $n \in \NN$
and
$E$ is the (finite) subset of reduced elements in a single $\sim$-equivalence class of words with letters in $[n]$.
Finally, let 
\be\label{bfW-red-eq} \bfW_\Reduced^{({\sim})} =\kk \Words_\Reduced^{(\sim)}
\qquand
\bfSigma_\Reduced^{(\sim)} = \kk \sSigma_\Reduced^{(\sim)}.
\ee
If $\sim$ is homogeneous then $\bfW_\Reduced^{(\sim)} = \bfW$ and $\bfSigma_\Reduced^{(\sim)} = \bfSigma^{(\sim)}$.

\begin{proposition} \label{red-prop}
Suppose ${\sim}$ is an algebraic word relation.
Then $\bfSigma_\Reduced^{(\sim)}$ and $\bfW_\Reduced^{(\sim)}$ are
sub-bialgebras of $(\bfW,\nabla_\shuffle,\iota_\shuffle,\Delta_\odot,\epsilon_\odot)$.
\end{proposition}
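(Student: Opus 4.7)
My plan is to establish two preliminary closure facts about reduced words, then build both sub-bialgebra claims on top of them.

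The first key observation is that the set of reduced words is closed under taking prefixes and suffixes. Concretely, if $w = uv$ and there were some $u' \sim u$ with $|u'| < |u|$, then condition (a) of Definition~\ref{alg-def} gives $u'v \sim uv = w$ with $|u'v| < |w|$, contradicting the reducedness of $w$. The symmetric argument handles suffixes. I would then prove a companion shuffle-closure statement: if $[v,m]$ and $[w,n]$ are reduced and $u$ appears in the shuffle $v \shuffle (w\uparrow m)$, then $u$ is reduced. For this I apply Definition~\ref{alg-def}(b) with $I = [m+1, m+n]$: any $u' \sim u$ must satisfy $u' \cap [m] \sim u \cap [m] = v$ and $(u' \cap I)\downarrow m \sim (u \cap I)\downarrow m = w$, so $|u' \cap [m]| \geq |v|$ and $|u' \cap I| \geq |w|$ by the reducedness of $v,w$. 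Since every letter of any $\sim$-relative of $u$ lies in $[m+n]$, summing these inequalities forces $|u'| \geq |v|+|w| = |u|$, so $u$ is reduced. Combining this with the prefix/suffix lemma immediately shows that $\bfW_\Reduced^{(\sim)}$ is closed under both $\nabla_\shuffle$ and $\Delta_\odot$; the unit $[\emptyset,0]$ is reduced and $\epsilon_\odot$ clearly restricts, so $\bfW_\Reduced^{(\sim)}$ is a sub-bialgebra.

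For $\bfSigma_\Reduced^{(\sim)}$, the basis elements $\kappa^n_E$ already lie in $\bfW_\Reduced^{(\sim)}$ by the above, so it remains to verify that applying $\nabla_\shuffle$ and $\Delta_\odot$ produces sums that group correctly into $\kappa$-elements. For the product, I expand
\[\nabla_\shuffle(\kappa_E^m \otimes \kappa_F^n) = \sum_{u} c_u \, [u, m+n],\]
and observe that for reduced $u$ the pair $(v,w) := (u \cap [m],\, (u \cap [m+1,m+n])\downarrow m)$ is the unique preimage, giving $c_u \in \{0,1\}$. I then show this indicator is constant on each $\sim$-class of reduced words in $[m+n]$: if $u \sim u'$ are both reduced, Definition~\ref{alg-def}(b) together with the prefix lemma ensures $u' \cap [m]$ and $(u' \cap [m+1,m+n])\downarrow m$ are reduced and $\sim$-equivalent to $v,w$ respectively, hence they land in $E, F$ iff $v, w$ do. This groups the sum as $\sum_{\bar G} \kappa_{G_\Reduced}^{m+n}$ over suitable full classes $\bar G$. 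The coproduct case is parallel: $\Delta_\odot(\kappa_E^n) = \sum_{uv \in E} [u,n] \otimes [v,n]$, and I again verify that the condition ``$uv \in E$'' depends only on the $\sim$-classes of the reduced pair $(u,v)$, using algebraic condition (a) and the fact that reduced words in a common class have equal length.

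The main obstacle is this multiplicity analysis for $\bfSigma_\Reduced^{(\sim)}$: one must rule out ``fractional'' groupings where a $\sim$-class of reduced words receives inconsistent contributions. The argument above hinges crucially on the bijective nature of the prefix/shuffle decompositions (giving $c_u \in \{0,1\}$) combined with the fact that reduced representatives of a class share a common length, so that the invariants ``prefix lies in $E$'' or ``restriction lies in $E$'' transport across $\sim$. Once these multiplicity checks go through, the unit and counit verifications are immediate since $[\emptyset, 0] = \kappa_{\{\emptyset\}}^0 \in \bfSigma_\Reduced^{(\sim)}$ and $\epsilon_\odot(\kappa_E^n) \in \{0,1\}$, completing the proof.
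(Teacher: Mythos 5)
Your treatment of $\bfW_\Reduced^{(\sim)}$ is exactly the paper's: the two closure facts you prove (prefix/suffix closure of reduced words under deconcatenation, and reducedness of every term of $v\shuffle(w\uparrow m)$ for reduced $v,w$) are precisely the consequences of Definition~\ref{alg-def}(a),(b) that the paper records, and your proofs of them are correct. For $\bfSigma_\Reduced^{(\sim)}$ you diverge in route: the paper deduces this half by combining the first half with Theorem~\ref{alg-thm} (the coalgebroid statement about full class sums), whereas you redo the coefficient analysis directly. Your explicit grouping argument is worth having, since the reduced class sums are not simply the "reduced part" of the full-class computation: a reduced word can occur in a shuffle of non-reduced representatives (for Hecke equivalence, $121$ is reduced yet appears in $11\shuffle 2$), so some counting of the kind you do is genuinely involved in passing from Theorem~\ref{alg-thm} to the reduced statement. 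Your coproduct grouping, via condition (a) and the fact that reduced words in one class share a length, is correct as written.

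There is, however, one mis-stated justification in your product grouping. You claim that for reduced $u'\sim u$ the restrictions $u'\cap[m]$ and $(u'\cap[m+1,m+n])\downarrow m$ are reduced "by Definition~\ref{alg-def}(b) together with the prefix lemma." The prefix lemma concerns concatenation factors and says nothing about alphabet-interval restrictions, and indeed the general statement is false: under Hecke equivalence $121$ is reduced but $121\cap\{1\}=11\sim 1$ is not. The fact you need is still true in your setting, but it requires the equality case of your own shuffle-closure count: since $u$ and $u'$ are both reduced in the same class, $\ell(u')=\ell(u)=\ell(v)+\ell(w)$, while $|u'\cap[m]|\geq\ell(v)$ and $|u'\cap[m+1,m+n]|\geq\ell(w)$ because these restrictions are $\sim$-equivalent to the reduced words $v$ and $w$; hence both inequalities are equalities, the restrictions have minimal length in their classes, and therefore lie in $E$ and $F$, giving $c_{u'}=1$. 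With that one-line repair your argument is complete and correct.
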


\begin{proof}
Conditions (a) and (b) in Definition~\ref{alg-def} respectively imply that (1) if $v$ and $w$ are words such that $vw$ is reduced then $v$ and $w$ are reduced,
and (2) if $v$ and $w$ are reduced words with $\max(v) \leq m$ then every term in the sum $v \shuffle (w \uparrow m)$ is reduced.
One concludes that $ \bfW_\Reduced^{({\sim})} $ is a sub-bialgebra of $\bfW$.

Condition (a) in Definition~\ref{alg-def}
implies that 
if $E$ is the set of reduced elements in a single $\sim$-equivalence class of words with letters in $[n]$
then
$\Delta_\odot(\kappa^n_{E}) $ is a finite sum of tensors of the form $\kappa^n_{F} \otimes \kappa^n_{G}$
where $F$ and $G$ are also the sets of reduced elements in $\sim$-equivalence classes of words with letters in $[n]$.
Thus $\bfSigma_\Reduced^{(\sim)}$ is a sub-coalgebra of $\bfW_\Reduced^{(\sim)}$.
It follows similarly from
condition (b) in Definition~\ref{alg-def} that $\bfSigma_\Reduced^{(\sim)}$ is subalgebra of $\bfW_\Reduced^{(\sim)}$.
Thus $\bfSigma_\Reduced^{(\sim)}$ is a sub-bialgebra of $\bfW_\Reduced^{(\sim)}$.
\end{proof}

\subsection{$\P$-algebraic relations}

To adapt Theorem~\ref{alg-thm} to packed words,
a somewhat technical variation of Definition~\ref{alg-def} is needed.
If $u,v \in \Packed$ are two packed words,
then we say that $w \in \Packed$ is a \emph{$(u,v)$-destandardization}
if there are (not necessarily packed) words $\tilde u, \tilde v$ such that $w = \tilde u \tilde v$ and 
$u = \st(\tilde u)$ and $v = \st(\tilde v)$.
For example, $1234$, $1324$, and $1423$ are $(12,12)$-destandardizations, as is $1212$.

\begin{definition}\label{alg-def3}
A word relation $\sim$ is \emph{$\P$-algebraic}
if for all  $v, w \in \Words_\P$, the following holds:
\ben
\item[(a)] Let $v', w' \in \Words_\P$ with $v\sim v'$ and $w\sim w'$.
 In any $\sim$-equivalence class,
the numbers of $(v,w)$- and $(v',w')$-destandardizations 
are  equal if $\ch \kk =0$ or congruent modulo $p = \ch \kk > 0$.

\item[(b)] If $v\sim w$ and $I = \{m+1,m+2,\dots,n\}$ for $m,n\in \NN$, then $(v \cap I) \downarrow m \sim (w\cap I) \downarrow m$.
\een
Note that property (a) depends on the field $\kk$.
\end{definition}


The set of packed words $\Packed$ is a union of equivalence classes under any word relation.
Let $\sSigma_\P^{(\sim)}$ be the set of sums $\kappa_E := \sum_{w \in E} w \in \hat\bfW_\P$
where $E$ is a $\sim$-equivalence class in $\Packed$.
Define $\bfSigma_\P^{({\sim})}=\kk\sSigma_\P^{(\sim)}$ 
and let $\hat\bfSigma_\P^{({\sim})} \subset \hat\bfW_\P$ be the completion of $\bfSigma_\P^{({\sim})}$
with respect to $\sSigma_\P^{(\sim)}$.

\begin{theorem}\label{packed-alg-thm}
Suppose $\sim$ is a word relation.
Then $\hat\bfSigma_\P^{(\sim)}$ is a linearly compact Hopf subalgebra of $(\hP,\nabla_\shuffle,\iota_\shuffle, \Delta_\odot,\epsilon_\odot)$
if and only if $\sim$ is $\P$-algebraic.
If $\sim$ is homogeneous, then 
$\bfSigma_\P^{(\sim)}$ is a graded Hopf subalgebra of $(\bfP,\nabla_\shuffle,\iota_\shuffle, \Delta_\odot,\epsilon_\odot)$
if and only if $\sim$ is $\P$-algebraic.
\end{theorem}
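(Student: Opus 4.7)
The plan is to mirror the argument for Theorem~\ref{alg-thm}, treating the closures under $\Delta_\odot$ and $\nabla_\shuffle$ separately and matching them against conditions (a) and (b) of Definition~\ref{alg-def3}. I would first reduce ``Hopf subalgebra'' to ``sub-bialgebra'': the graded bialgebra $\bfP$ is connected ($\bfP_0 = \kk\emptyset$), so any sub-bialgebra is automatically closed under the antipode via the Takeuchi recursion, and by continuity the same conclusion holds in $\hP$. The unit and counit closures are automatic since $\iota_\shuffle(1) = \emptyset$ is itself a $\sim$-class and $\epsilon_\odot$ extracts the coefficient of this class.

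For the coproduct, I would expand
\[
\Delta_\odot(\kappa_E) \;=\; \sum_{w \in E}\; \sum_{i=0}^{\ell(w)} \st(w_1 \cdots w_i) \otimes \st(w_{i+1} \cdots w_{\ell(w)}),
\]
and observe that for fixed packed words $u, v$ the coefficient of $u \otimes v$ equals the number of $(u,v)$-destandardizations contained in $E$: the split position is forced to be $i = \ell(u)$, and then $\tilde u = w_1 \cdots w_i$ and $\tilde v = w_{i+1}\cdots w_{\ell(w)}$ form a valid $(u,v)$-decomposition exactly when $\st(\tilde u) = u$ and $\st(\tilde v) = v$. Membership of $\Delta_\odot(\kappa_E)$ in $\hat\bfSigma_\P^{(\sim)} \htimes \hat\bfSigma_\P^{(\sim)}$ is therefore equivalent to demanding that this count, interpreted in $\kk$, be constant as $(u,v)$ ranges over any product of $\sim$-classes, which is precisely Definition~\ref{alg-def3}(a). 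For the converse, any failure of (a) translates into some $\kappa_E$ whose coproduct has unequal coefficients on a pair $u \otimes v$ and $u' \otimes v'$ with $u \sim u'$ and $v \sim v'$, preventing the decomposition.

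For the product, fix $\sim$-classes $E$ and $F$ in $\Packed$. All elements of $E$ share a common support by the definition of a word relation, hence a common $\max$, say $m$; similarly $F$ has constant $\max$ equal to $k$. Then
\[
\nabla_\shuffle(\kappa_E \otimes \kappa_F) \;=\; \sum_{u \in E,\ v \in F} u \shuffle (v \uparrow m),
\]
and since letters from $u$ and from $v\uparrow m$ lie in disjoint alphabets, the coefficient of a packed word $w$ with $\max(w) = m+k$ is $1$ exactly when $w \cap [m] \in E$ and $(w \cap (m,m+k]) \downarrow m \in F$, and $0$ otherwise. Requiring this indicator to be constant on $\sim$-classes of $w$ amounts to $w \sim w'$ forcing both $w \cap [m] \sim w' \cap [m]$ and $(w \cap (m,m+k]) \downarrow m \sim (w' \cap (m,m+k]) \downarrow m$. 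The second is Definition~\ref{alg-def3}(b), and the first follows from the $m=0$ case of (b) via Lemma~\ref{bas1-lem}; conversely, a failure of (b) produces explicit witnesses of non-closure under $\nabla_\shuffle$.

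The main obstacle will be the coproduct side: pinning down the coefficient of $u \otimes v$ as the exact count of $(u,v)$-destandardizations in $E$, and correctly handling the characteristic-$p$ subtlety in (a), where equality of coefficients in $\kk$ requires only congruence modulo $p$. Once the sub-bialgebra claim is established for $\hat\bfSigma_\P^{(\sim)} \subset \hP$, the homogeneous case follows at once: each $\sim$-class is then contained in a single $\Packed_n$, so $\bfSigma_\P^{(\sim)} = \bigoplus_n \bigl(\bfSigma_\P^{(\sim)} \cap \bfP_n\bigr)$ lies in $\bfP$ without any need for completion and inherits the grading from $\bfP$, yielding a graded Hopf subalgebra.
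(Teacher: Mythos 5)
Your proposal is correct and follows essentially the same route as the paper: identifying the coefficient of $u\otimes v$ in $\Delta_\odot(\kappa_E)$ with the count of $(u,v)$-destandardizations (interpreted in $\kk$, which handles the characteristic-$p$ clause) to match condition (a), using the disjoint-alphabet shuffle argument from the proof of Theorem~\ref{alg-thm} to match the product closure with condition (b), and deducing the homogeneous graded statement from the completed one. Your extra remarks on unit, counit, and antipode closure (connectedness plus Takeuchi's formula, extended by continuity) supply a detail the paper leaves implicit, and are consistent with its argument.
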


The part of the theorem asserting that $\bfSigma_\P^{(\sim)}$ is a Hopf algebra when $\sim$ is homogeneous and $\P$-algebraic
is formally similar to
  \cite[Theorem 31]{Hiver07} and \cite[Theorem 2.1]{NovelliThibon}, 
  though neither of these results is a special case of our statement, or vice versa.

\begin{proof}
We first prove the weaker version of the theorem where both instances of ``Hopf subalgebra'' are replaced by ``sub-bialgebra.''
Suppose $v$ and $w$ are packed words and $E \subset \Words_\P$ is a $\sim$-equivalence class.
The coefficient of $v\otimes w$ in $\Delta_\odot(\kappa_E)$
is the number of $(v,w)$-destandardizations in $E$, modulo $p$ if $p=\ch \kk>0$.
We have $\Delta_\odot(\kappa_E) \in \hat\bfSigma_\P^{(\sim)} \htimes \hat\bfSigma_\P^{(\sim)}$
if and only if this coefficient is the same as the corresponding coefficient of $v'\otimes w'$
for any packed words $v',w'$ with $v\sim v'$ and $w\sim w'$.
It follows that $\hat\bfSigma_\P^{(\sim)}$ is a linearly compact sub-coalgebra of $\hat\bfW_\P$
if and only if condition (a) in Definition~\ref{alg-def3} holds.

One has 
 $\nabla_\shuffle(\kappa_E \otimes \kappa_F)\in\hat\bfSigma_\P^{(\sim)}$ for 
all basis elements
$\kappa_E,\kappa_F \in \sSigma^{(\sim)}_\P$
if and only if condition (b) in Definition~\ref{alg-def3} holds by the same reasoning as 
in the proof of Theorem~\ref{alg-thm}.
We conclude that $\hat\bfSigma_\P^{(\sim)}$ is a 
linearly compact sub-bialgebra of $\hat\bfW_\P$ if and only if $\sim$ is $\P$-algebraic.
If $\sim$ is homogeneous
then, in view of Example~\ref{ext-ex}, $\bfSigma_\P^{(\sim)}$ is a graded sub-bialgebra of $\bfW_\P$ 
if and only if 
$\hat\bfSigma_\P^{(\sim)}$ is a 
linearly compact sub-bialgebra of $\hat\bfW_\P$.

To upgrade these conclusions to what is stated in the theorem, 
we first observe that if $\sim$ is homogeneous and $\P$-algebraic then $\bfSigma_\P^{(\sim)}$ is a bialgebra that is graded and connected,
and all such bialgebras are Hopf algebras \cite[Proposition 1.4.16]{GrinbergReiner}.

Next assume $\sim$ is $\P$-algebraic but not necessarily homogeneous.  Then $\hat\bfSigma_\P^{(\sim)}$ is the dual of a bialgebra $\mathbf{H}^{(\sim)}$ with a basis
consisting of all $\sim$-equivalence classes of packed words. Given a packed word $w$, let $\overline w$ denote its $\sim$-equivalence class.
The product in $\mathbf{H}^{(\sim)}$ of the equivalence classes of two packed words $v$ and $w$ is $\nabla(\overline v \otimes \overline w) = \sum_u \overline u$ where the sum is over the finite set of packed words $u$ that are $(v,w)$-destandardizations.
Similarly, the coproduct in  $\mathbf{H}^{(\sim)}$ of the $\sim$-equivalence class of a packed word $w$ with $n =\max(w)$
is $\Delta(\overline w) = \sum_{m=0}^n \overline{w \cap \{1,2,\dots, m\}} \otimes  \overline{(w\downarrow m) \cap \{1,2,\dots, n-m\}}$. Let $\mathbf{H}^{(\sim)}_n\subset \mathbf{H}^{(\sim)}$ be the subspace spanned by all $\sim$-equivalence classes  containing a packed word of length $\leq n$, so that if $w$ is a packed word of length $n$ then $\overline{w} \in\mathbf{H}^{(\sim)}_n$.
Then we have a filtratation
\[\mathbf{H}^{(\sim)}_0 \subset \mathbf{H}^{(\sim)}_1 \subset \mathbf{H}^{(\sim)}_2 \subset \dots \subset \bigcup_{n \in \NN} \mathbf{H}^{(\sim)}_n  =\mathbf{H}^{(\sim)}\]
and the bialgebra $\mathbf{H}^{(\sim)}$ is both \emph{filtered} in the sense that
 \[\nabla\(\mathbf{H}^{(\sim)}_p \otimes \mathbf{H}^{(\sim)}_q\) \subset \mathbf{H}^{(\sim)}_{p+q}
 \quand \Delta\(\mathbf{H}^{(\sim)}_n\) \subset \sum_{p+q =n} \mathbf{H}^{(\sim)}_p \otimes \mathbf{H}^{(\sim)}_q\]
as well as \emph{connected} in the sense that $\dim \mathbf{H}^{(\sim)}_0 = 1$.

Any
connected filtered bialgebra has an antipode given by Takeuchi's formula  (see \cite[Proposition 1.4.24 and Remark 1.4.25]{GrinbergReiner} or \cite[Corollary II.3.2]{Manchon}).
Hence, if $\sim$ is $\P$-algebraic, then $\hat\bfSigma_\P^{(\sim)}$ is a linearly compact Hopf algebra since it is the dual of a Hopf algebra.
More precisely, 
to see that $\hat\bfSigma_\P^{(\sim)}$ is not just a linearly compact Hopf algebra but a linearly compact  Hopf subalgebra of $(\hP,\nabla_\shuffle,\iota_\shuffle, \Delta_\odot,\epsilon_\odot)$, observe that the latter is just $\hat\bfSigma_\P^{(=)}$ and is therefore the dual of $\mathbf{H}^{(=)}$, 
where $=$ is the usual equality relation interpreted as the ($\P$-algebraic) word relation whose equivalence classes all have size one.
But 
 $\mathbf{H}^{(\sim)}$ is evidently a quotient of  $\mathbf{H}^{(=)}$, so under duality 
$\hat\bfSigma_\P^{(\sim)}$ becomes a  linearly compact Hopf subalgebra of $\hat\bfSigma_\P^{(=)}=(\hP,\nabla_\shuffle,\iota_\shuffle, \Delta_\odot,\epsilon_\odot)$.
\end{proof}

\begin{corollary} \label{p-alg-cor1}
If $\sim$ is $\P$-algebraic and of finite-type then
$(\bfSigma_\P^{(\sim)},\nabla_\shuffle,\iota_\shuffle, \Delta_\odot,\epsilon_\odot) \in \Bimon(\kVec)$.
\end{corollary}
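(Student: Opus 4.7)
The strategy is to inherit the bimonoid structure from the larger linearly compact object $\hat\bfSigma_\P^{(\sim)}$, which is already a linearly compact Hopf subalgebra of $\hP$ by Theorem~\ref{packed-alg-thm}. Since $\bfSigma_\P^{(\sim)} \subset \hat\bfSigma_\P^{(\sim)}$ is the subspace of \emph{finite} $\kk$-linear combinations of the pseudobasis $\sSigma_\P^{(\sim)}$, it suffices to check that $\nabla_\shuffle$ restricts to a map $\bfSigma_\P^{(\sim)} \otimes \bfSigma_\P^{(\sim)} \to \bfSigma_\P^{(\sim)}$ and that $\Delta_\odot$ restricts to a map $\bfSigma_\P^{(\sim)} \to \bfSigma_\P^{(\sim)} \otimes \bfSigma_\P^{(\sim)}$; the unit $\iota_\shuffle$ and counit $\epsilon_\odot$ visibly restrict as required, and the bimonoid axioms then pass from $\hat\bfSigma_\P^{(\sim)}$ to $\bfSigma_\P^{(\sim)}$ automatically.

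The key observation is a boundedness property for the maximum letter appearing in products and coproducts. For $\sim$-equivalence classes $E, F$ of packed words with $m = \max(E)$ and $k = \max(F)$, every packed word occurring in $\nabla_\shuffle(\kappa_E \otimes \kappa_F) = \sum_{u \in E,\, v \in F} u \shuffle (v\uparrow m)$ has maximum letter at most $m+k$, and every packed word appearing in either tensor factor of $\Delta_\odot(\kappa_E) = \sum_{w \in E} \sum_{i=0}^{\ell(w)} \st(w_1\cdots w_i) \otimes \st(w_{i+1}\cdots w_{\ell(w)})$ has maximum letter at most $m$. Theorem~\ref{packed-alg-thm} guarantees that these images admit unique expansions $\sum_G c_G\, \kappa_G$ and $\sum_{(G,H)} c_{GH}\, \kappa_G \otimes \kappa_H$ in the pseudobases of $\hat\bfSigma_\P^{(\sim)}$ and $\hat\bfSigma_\P^{(\sim)} \htimes \hat\bfSigma_\P^{(\sim)}$, in which only equivalence classes of packed words with maximum letter at most $m+k$ (respectively $m$) can contribute.

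At this point the finite-type hypothesis takes over: for any $N \in \NN$, the number of $\sim$-equivalence classes of words with letters in $[N]$ equals $\dim_\kk \bfSigma_N^{(\sim)} < \infty$, so only finitely many equivalence classes of packed words can have maximum letter at most $N$. Combined with the boundedness, this forces $\nabla_\shuffle(\kappa_E \otimes \kappa_F)$ and $\Delta_\odot(\kappa_E)$ to be \emph{finite} sums of pseudobasis elements, placing them in $\bfSigma_\P^{(\sim)}$ and $\bfSigma_\P^{(\sim)} \otimes \bfSigma_\P^{(\sim)}$ respectively. The main subtlety I expect is in the coproduct step: to conclude that the image of $\Delta_\odot$ lies in the \emph{uncompleted} tensor $\bfSigma_\P^{(\sim)} \otimes \bfSigma_\P^{(\sim)}$ rather than only in $\hat\bfSigma_\P^{(\sim)} \htimes \hat\bfSigma_\P^{(\sim)}$, one needs the coefficients $c_{GH}$ to be supported on a finite set of pairs, which is exactly what the above argument supplies, after which $\htimes$ and $\otimes$ agree on the finite-dimensional subspace spanned by the contributing $\kappa_G$ and $\kappa_H$.
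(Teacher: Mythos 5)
Your proposal is correct and takes essentially the same approach as the paper: the paper's proof simply asserts that, given Theorem~\ref{packed-alg-thm} and the finite-type hypothesis, all products and coproducts of basis elements of $\sSigma_\P^{(\sim)}$ are finite sums of (tensors of) basis elements, hence lie in $\bfSigma_\P^{(\sim)}$ or $\bfSigma_\P^{(\sim)}\otimes\bfSigma_\P^{(\sim)}$, and notes that the unit $\emptyset$ lies in $\bfSigma_\P^{(\sim)}$. Your max-letter boundedness argument merely spells out the detail the paper leaves implicit.
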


This bialgebra is not necessarily graded so may fail to be a Hopf algebra; see Example~\ref{kknuth-ex}.

\begin{proof}
Assume $\sim$ is $\P$-algebraic and of finite-type.
All products and coproducts of basis elements in $\sSigma_{\P}^{(\sim)}$
are finite sums of (tensors of) other basis elements, so belong to $\bfSigma_\P^{(\sim)}$ or $\bfSigma_\P^{(\sim)} \otimes \bfSigma_\P^{(\sim)}$.
The unit element $\emptyset  \in\hat\bfSigma_\P^{(\sim)}$ is also 
in $\bfSigma_\P^{(\sim)}$,
so $(\bfSigma_\P^{(\sim)},\nabla_\shuffle,\iota_\shuffle, \Delta_\odot,\epsilon_\odot)$
is a bialgebra.
\end{proof}

An algebraic word relation is not necessarily $\P$-algebraic, or vice versa.
The following is a natural sufficient condition
for this to occur.

\begin{lemma}\label{suff-lem}
Let $\sim$ be an algebraic word relation.
Assume that whenever $v$ and $w$ are words with the same set of letters
and $\st(v) \sim \st(w)$, it holds that $v \sim w$.
Then $\sim$ is $\P$-algebraic.
\end{lemma}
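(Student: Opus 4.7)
The plan is to verify the two conditions of Definition~\ref{alg-def3} directly, using the hypothesis of the lemma together with the properties of algebraic relations.

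Condition (b) is identical in both definitions, so it is inherited for free from the assumption that $\sim$ is algebraic. The work is therefore entirely in establishing condition (a): given packed words $v,v',w,w'$ with $v\sim v'$ and $w\sim w'$ and any $\sim$-equivalence class $E \subset \Packed$, I must show that the number of $(v,w)$-destandardizations in $E$ equals the number of $(v',w')$-destandardizations in $E$ (not merely modulo $p$, which will make the conclusion hold for any field $\kk$).

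The key step is to construct an explicit bijection between the two sets of destandardizations. Given a pair $(\tilde u,\tilde v)$ with $\tilde u\tilde v\in E$, $\st(\tilde u)=v$, and $\st(\tilde v)=w$, let $A$ and $B$ denote the sets of letters of $\tilde u$ and $\tilde v$ respectively. Recall that a word relation preserves the set of letters, so $v$ and $v'$ have the same set of distinct letters (namely $[\max(v)]$), and similarly for $w,w'$. Let $\phi\colon [\max(v)]\to A$ and $\psi\colon[\max(w)]\to B$ be the order-preserving bijections, and define $\tilde u' := \phi(v')$ and $\tilde v' := \psi(w')$ (applying $\phi,\psi$ letterwise). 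By construction $\st(\tilde u')=v'$, $\st(\tilde v')=w'$, the letter set of $\tilde u'$ is $A$, and the letter set of $\tilde v'$ is $B$, so $\tilde u'\tilde v'$ is packed (its letter set is $A\cup B$, which coincides with the packed letter set of $\tilde u\tilde v$).

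It remains to verify that $\tilde u'\tilde v' \in E$. Since $\tilde u$ and $\tilde u'$ share the same set of letters $A$ and satisfy $\st(\tilde u)=v\sim v'=\st(\tilde u')$, the hypothesis of the lemma gives $\tilde u\sim \tilde u'$; analogously $\tilde v\sim \tilde v'$. The congruence property in Definition~\ref{alg-def}(a) then yields $\tilde u\tilde v\sim \tilde u'\tilde v'$, so $\tilde u'\tilde v' \in E$. The assignment $(\tilde u,\tilde v)\mapsto (\tilde u',\tilde v')$ is manifestly reversible by swapping the roles of the primed and unprimed pairs, hence is a bijection, and so the destandardization counts match exactly. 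The main (and only mildly subtle) obstacle is making sure the replaced word stays packed and stays in the class $E$; both are handled by the observation that $\tilde u'$ and $\tilde v'$ reuse exactly the original letter sets $A$ and $B$, which lets the hypothesis and the congruence property of $\sim$ close the argument.
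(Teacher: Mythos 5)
Your proof is correct and follows essentially the same route as the paper: replace each factor of a destandardization by the unique word on the same letter set whose flattening is the primed packed word, use the lemma's hypothesis to see each factor stays in its $\sim$-class, and use the congruence property (Definition~\ref{alg-def}(a)) to conclude the image lies in the same class, giving a bijection between the two sets of destandardizations. Your explicit remarks that condition (b) is literally shared with Definition~\ref{alg-def} and that the image word remains packed are details the paper leaves implicit, but the argument is the same.
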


\begin{proof}
Suppose $v,w,v',w' \in \Words_\P$ and $v\sim v'$ and $w\sim w'$.
For any word $\tilde v$ with $\st(\tilde v) = v$,
there exists a unique word $\tilde v'$ that has the same set of letters as $\tilde v$ and satisfies $\st(\tilde v') = v'$,
and for this word we have $\tilde v \sim \tilde v'$.
Given a word $\tilde w$ with $\st(\tilde w) = w$, define $\tilde w'$ analogously.
The map $\tilde v \tilde w \mapsto \tilde v' \tilde w'$
is then a bijection between
the sets of $(v,w)$- and $(v',w')$-destandardizations 
in any $\sim$-equivalence class, so $\sim$ is $\P$-algebraic.
\end{proof}

\subsection{Uniformly algebraic relations}\label{uniform-sect}

Problematically, we do not know of any efficient method to check whether
an arbitrary word relation satisfies condition (a) in Definition~\ref{alg-def3},
or to generate relations that have this property.
It is therefore useful in practice to consider the following less general type of relation:

\begin{definition}\label{alg-def2}
A word relation $\sim$ is \emph{uniformly algebraic}
if for all words $v,w$, the following holds:
\ben
\item[(a)] If $v',w'$ are words with $v\sim v'$ and $w\sim w'$, then $vw\sim v'w'$.
\item[(b)] If $v\sim w$ and $I \subset \PP$ is an interval (i.e., a set of consecutive integers), then $v\cap I \sim w\cap I$.
\item[(c)] If $v\sim w$ then $\phi(v) \sim \phi(w)$ for any order-preserving injection 
$\phi : [\min(v),\max(v)] \to \PP.$
\een
\end{definition}

Condition (b)  is the property referred to in \cite[\S3.1.2]{Giraudo}, \cite[\S4.3]{Hiver07}, and \cite[Definition 4]{Priez}
as  \emph{compatibility with restriction to alphabet intervals}.
Condition (c) is a weaker form of  
the property referred to in  \cite{Giraudo,Hiver07} as
\emph{compatibility with (de)standardization}.

\begin{lemma}\label{uni-suff-lem}
An algebraic word relation $\sim$ is uniformly algebraic
if and only if $\phi(v) \sim \phi(w)$ whenever $v,w$ are words with
$v\sim w$ and $\phi : [\max(v)] \to \PP$ is an order-preserving injection.
\end{lemma}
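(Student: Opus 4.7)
The forward direction is essentially immediate, while the backward direction uses the algebraic hypothesis through Lemma~\ref{bas2-lem}.

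Assuming $\sim$ is uniformly algebraic, given an order-preserving injection $\phi : [\max(v)] \to \PP$ and $v \sim w$, I would observe that $v$ and $w$ have the same underlying set of letters (which is built into the definition of a word relation), so they share the same $\min$ and $\max$. The action of $\phi$ on them depends only on the restriction $\phi|_{[\min(v), \max(v)]}$, which is itself an order-preserving injection, and condition (c) of Definition~\ref{alg-def2} immediately produces $\phi(v) \sim \phi(w)$.

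For the backward direction, I would assume $\sim$ is algebraic and satisfies the stated property, then verify each of (a), (b), (c) in Definition~\ref{alg-def2}. Clause (a) is identical to algebraic (a), so there is nothing to show. For clause (c), given $\phi : [k, m] \to \PP$ order-preserving and injective with $k = \min(v)$ and $m = \max(v)$, the plan is to extend $\phi$ to an order-preserving injection $\tilde\phi : [m] \to \PP$ by setting $\tilde\phi(i) = i$ on $[1, k-1]$ and $\tilde\phi(i) = \phi(i) + N$ on $[k, m]$, choosing $N$ large enough that $\phi(k) + N > k - 1$ (e.g.\ $N = k$). Injectivity and order-preservation of $\tilde\phi$ are routine. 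Applying the stated property yields $\tilde\phi(v) \sim \tilde\phi(w)$, which because every letter of $v$ and $w$ lies in $[k, m]$ reads $\phi(v) \uparrow N \sim \phi(w) \uparrow N$; Lemma~\ref{bas2-lem} then cancels the shift and delivers $\phi(v) \sim \phi(w)$.

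Clause (b) comes out by composing algebraic (b) with the stated property: for an interval $I = [a, b] \subset \PP$, the algebraic axiom with $m = a-1$ and $n = b$ gives $(v \cap I) \downarrow (a-1) \sim (w \cap I) \downarrow (a-1)$, and the stated property applied to the order-preserving injection $i \mapsto i + (a-1)$ shifts both sides back up to $v \cap I \sim w \cap I$ (with the degenerate case $v \cap I = \varnothing$ trivial). The main obstacle is clause (c): the naive order-preserving extension of $\phi$ from $[\min(v), \max(v)]$ to $[\max(v)]$ can fail to exist inside $\PP$ when $\phi(\min(v)) < \min(v)$, which is precisely why the preliminary shift by $N$ and the appeal to Lemma~\ref{bas2-lem} are needed.
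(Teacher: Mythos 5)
Your proof is correct and follows essentially the same route as the paper: the forward direction by restricting $\phi$ to $[\min(v),\max(v)]$ and invoking condition (c), condition (b) by combining algebraic (b) with the stated property applied to $i \mapsto i+m$, and condition (c) by a shifted extension of $\phi$ to $[\max(v)]$ followed by Lemma~\ref{bas2-lem}. You merely make explicit the shift-and-cancel step that the paper compresses into the phrase ``in view of Lemma~\ref{bas2-lem}.''
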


\begin{proof}
The given property is a special case of condition (c) in Definition~\ref{alg-def2}, so is certainly necessary.
Let $\sim$ be an algebraic word relation with this property.
If $v \sim w$ and $I=\{m+1,m+2,\dots,n\}$ then $(v\cap I) \downarrow m \sim (w \cap I) \downarrow m$,
and applying the map $\phi : i \mapsto i +m$ to both sides gives
$v\cap I \sim w\cap I$.
Condition (c) in Definition~\ref{alg-def2} holds in view of Lemma~\ref{bas2-lem}.
\end{proof}



\begin{corollary}
A uniformly algebraic word relation is both algebraic and $\P$-algebraic.
\end{corollary}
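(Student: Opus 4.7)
The plan is to verify the two implications separately, using the preceding Lemma~\ref{suff-lem} to handle the $\P$-algebraic half.

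\textbf{Uniformly algebraic $\Rightarrow$ algebraic.} Condition (a) of Definition~\ref{alg-def} is identical to condition (a) of Definition~\ref{alg-def2}, so there is nothing to check. For condition (b), suppose $v \sim w$ and $I = \{m+1, m+2, \dots, n\}$. By Definition~\ref{alg-def2}(b), we have $v \cap I \sim w \cap I$. If $v \cap I$ is empty the conclusion is trivial; otherwise every letter of $v \cap I$ and $w \cap I$ lies in $I$, so the translation $\phi : i \mapsto i - m$ is a well-defined order-preserving injection from $[\min(v \cap I),\max(v\cap I)] \subset I$ into $\PP$. Applying Definition~\ref{alg-def2}(c) to $v\cap I \sim w\cap I$ yields $(v\cap I)\downarrow m = \phi(v\cap I) \sim \phi(w\cap I) = (w\cap I)\downarrow m$, which is exactly condition (b) of Definition~\ref{alg-def}.

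\textbf{Uniformly algebraic $\Rightarrow$ $\P$-algebraic.} Having just shown that $\sim$ is algebraic, it suffices by Lemma~\ref{suff-lem} to verify that whenever $v$ and $w$ are words sharing the same letter set and $\st(v) \sim \st(w)$, one has $v \sim w$. Write $S = \{a_1 < a_2 < \cdots < a_k\}$ for this common set of letters, so that the standardized words $\st(v)$ and $\st(w)$ are packed words with letters in $[k]$. The map $\phi : [k] \to \PP$ defined by $\phi(i) = a_i$ is an order-preserving injection with $\phi(\st(v)) = v$ and $\phi(\st(w)) = w$. Since $\st(v) \sim \st(w)$ and $\phi$ is defined on $[\min(\st(v)),\max(\st(v))] = [1,k]$ (or the statement is vacuous if $v$ is empty), Definition~\ref{alg-def2}(c) gives $v = \phi(\st(v)) \sim \phi(\st(w)) = w$. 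Thus the hypothesis of Lemma~\ref{suff-lem} is met, and $\sim$ is $\P$-algebraic.

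The argument is essentially bookkeeping with order-preserving translations and standardization maps; the only mild subtlety lies in handling the edge cases where a subword becomes empty, which do not affect the conclusion since condition~(c) of Definition~\ref{alg-def2} is vacuous there.
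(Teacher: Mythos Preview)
Your proof is correct and follows essentially the same approach as the paper: you use conditions (b) and (c) of Definition~\ref{alg-def2} together to verify condition (b) of Definition~\ref{alg-def}, and you verify the hypothesis of Lemma~\ref{suff-lem} via condition (c) applied to the order-preserving injection $i \mapsto a_i$. The paper's proof is terser but makes exactly the same moves.
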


\begin{proof}
Suppose $\sim$ is uniformly algebraic.
Conditions (b) and (c) in Definition~\ref{alg-def2} together imply condition (b) in Definition~\ref{alg-def}.
Moreover, it follows that 
if $v$ and $w$ are words with the same set of letters
and $\st(v) \sim \st(w)$, then $v \sim w$.
By Lemma~\ref{suff-lem}, $\sim$ is therefore algebraic and $\P$-algebraic.
\end{proof}

Finally, we note a simple way of generating (uniformly) algebraic word relations.

\begin{proposition}\label{g-prop}
Let $\cG$ be a 
set of unordered pairs of words.
Assume that  $v$ and $w$ have the same set of letters if $\{v,w\} \in \cG$,
and if $I$ is an interval then $v\cap I = w\cap I$ or $\{ (v\cap I) \downarrow m,(w\cap I)\downarrow m\} \in \cG$ for some $0 \leq m < \min (I)$.
The reflexive, transitive closure of the relation $\sim$
with \[a(v\downarrow m)b\sim a(w\downarrow m)b\] for all words $a$ and $b$, pairs $\{v,w\} \in \cG$, and integers $0 \leq m < \min(v) =\min(w)$
is then an algebraic word relation.
If it holds that $\{\phi(v),\phi(w)\} \in \cG$ whenever $\{v,w\} \in \cG$ and $\phi : \PP \to \PP$ is an order-preserving injective map,
then $\sim$ is uniformly algebraic.
\end{proposition}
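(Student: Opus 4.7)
My plan is to verify each of the three claims (that $\sim$ is a word relation, that it is algebraic, and that under the additional hypothesis it is uniformly algebraic) by induction on the minimal number of elementary moves $a(v\downarrow m)b \leftrightarrow a(w\downarrow m)b$ needed to witness an equivalence. Throughout, I will refer to such an elementary move as being \emph{at level $m$} with \emph{core} $\{v,w\}\in\cG$.

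First, I would observe that $\sim$ really is a word relation: a single elementary move at level $m$ replaces the subword $v\downarrow m$ by $w\downarrow m$, and since $v$ and $w$ share the same letters by hypothesis on $\cG$, so do $v\downarrow m$ and $w\downarrow m$. Taking transitive closure preserves this, so all $\sim$-equivalent words share the same set of letters. Next, condition (a) of Definition~\ref{alg-def} is essentially tautological from the construction of $\sim$: the defining moves occur in arbitrary contexts $a\bullet b$, so if $v\sim v'$ and $w\sim w'$ one may first move $v$ to $v'$ (with $b=w$) and then $w$ to $w'$ (with $a=v'$) to get $vw\sim v'w\sim v'w'$.

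The technical core of the argument is condition (b) of Definition~\ref{alg-def}, and this is where I expect the most careful bookkeeping. By induction it suffices to consider a single elementary move: write $v=a(x\downarrow k)b$ and $w=a(y\downarrow k)b$ with $\{x,y\}\in\cG$ and $0\leq k<\min(x)=\min(y)$, and let $I=\{m+1,\dots,n\}$. Intersecting with $I$ distributes over concatenation, so
\[
(v\cap I)\downarrow m=((a\cap I)\downarrow m)\bigl(((x\downarrow k)\cap I)\downarrow m\bigr)((b\cap I)\downarrow m),
\]
and the same for $w$ with $y$ in place of $x$. The middle factor equals $(x\cap J)\downarrow(k+m)$ for the shifted interval $J=\{m+k+1,\dots,n+k\}$. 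The standing hypothesis on $\cG$ applied to this $J$ gives either $x\cap J=y\cap J$ (in which case both sides are literally equal) or a pair $\{x',y'\}\in\cG$ with $x'=(x\cap J)\downarrow m'$ and $y'=(y\cap J)\downarrow m'$ for some $0\leq m'<\min(J)=m+k+1$. Setting $t=k+m-m'\geq 0$, I would then rewrite $(x\cap J)\downarrow(k+m)=x'\downarrow t$ with $t<\min(x')$, so that the middle factors are related by a valid elementary move at level $t$. Composing this with the common outer factors shows $(v\cap I)\downarrow m\sim(w\cap I)\downarrow m$, completing the inductive step.

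Finally, for uniform algebraicity I would invoke Lemma~\ref{uni-suff-lem}, which reduces the problem to showing that $\phi(v)\sim\phi(w)$ for every order-preserving injection $\phi:[\max(v)]\to\PP$. Again induction reduces to a single elementary move $v=a(x\downarrow k)b$, $w=a(y\downarrow k)b$. I would first extend $\phi$ to an order-preserving injection $\Phi:\PP\to\PP$, then define $\phi':\PP\to\PP$ by $\phi'(i)=i$ for $i\leq k$ and $\phi'(i)=\Phi(i-k)+k$ for $i>k$; this is an order-preserving injection. A direct check shows that $\phi'(x)\downarrow k=\Phi(x\downarrow k)=\phi(x\downarrow k)$, and $k<\min(\phi'(x))$ since $\phi'$ takes values $>k$ above $k$. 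The additional hypothesis gives $\{\phi'(x),\phi'(y)\}\in\cG$, so $\phi(v)=\phi(a)(\phi'(x)\downarrow k)\phi(b)\sim\phi(a)(\phi'(y)\downarrow k)\phi(b)=\phi(w)$ by a single elementary move. The main obstacle throughout is the bookkeeping in the single-move case of condition (b) — matching the shifts $k,m,m',t$ and verifying that the resulting candidate move is legal — but once that is set up correctly, both the algebraic and uniformly algebraic assertions fall out by straightforward induction.
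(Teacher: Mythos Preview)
Your proposal is correct and follows essentially the same approach as the paper's proof: reducing condition (b) to a single elementary move and applying the interval hypothesis on $\cG$ with the shifted interval $J = k + I$, then invoking Lemma~\ref{uni-suff-lem} for the uniformly algebraic claim. Your treatment is in fact more detailed than the paper's, which leaves implicit both the handling of the contexts $a,b$ in part (b) and the construction of the auxiliary injection $\phi'$ in the uniform case.
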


We refer to $\sim$ as the \emph{strongest algebraic word relation} with $v\sim w$ for $\{v,w\} \in \cG$.

\begin{proof}
Condition (a) in Definition~\ref{alg-def}
holds if and only if one has $avb\sim awb$ whenever $a,b,v,w$ are words with $v \sim w$,
which is evidently the case here.
To check condition (b) in Definition~\ref{alg-def},
let $I = \{m+1,m+2,\dots,n\}$ be an interval in $\PP$, fix a pair $\{v,w\} \in \cG$, and let $0\leq k < \min(v) = \min(w)$.
It suffices to show that $\tilde v := ((v \downarrow k) \cap I) \downarrow m \sim ((w \downarrow k) \cap I) \downarrow m =: \tilde w$.
Since $\tilde v = (v \cap J) \downarrow (m+k)$ and 
$\tilde w = (w \cap J) \downarrow (m+k)$
for $J = k + I$,
and since we know that either $v \cap J = w\cap J$ or 
$\{ (v\cap J) \downarrow l,(w\cap J)\downarrow l\} \in \cG$ for an integer $0\leq l \leq m+k$,
the desired conclusion follows.

Now assume that $\{\phi(v),\phi(w)\} \in \cG$ whenever $\{v,w\} \in \cG$ and $\phi : \PP \to \PP$ is an order-preserving injective map.
To show that $\sim$ is uniformly algebraic, it suffices by Lemma~\ref{uni-suff-lem}
to check that
 $\phi(x) \sim \phi(y)$
whenever 
$x$ and $y$ are words with
$x\sim y$ and $\phi : \PP \to \PP$ is an order-preserving injection.
It is enough to show this when $x=a(v\downarrow m)b$ and $y = a (w\downarrow m) b$
for some $\{v,w\} \in \cG$, where $0 \leq m < \min(v) =\min(w)$ and where $a$ and $b$ are arbitrary words.
Observe that
 $\phi(v\downarrow m) = \psi(v) \downarrow m$ and $\phi(w\downarrow m) = \psi(w) \downarrow m$
 where $\psi : \PP \to \PP$ is the map with
 \[ \psi(i) = \begin{cases} i & \text{if }i \leq m \\ \phi(i -m) + m &\text{if }i > m\end{cases}\]
for $i \in \PP$. This map is an order-preserving injection, 
so we have $\{ \psi(v),\psi(w)\} \in \cG$ by hypothesis, and  it also holds that $0 \leq m < \min(\psi(v)) =\min(\psi(w))$.
Thus 
\[\phi(x) = \phi(a) (\psi(v) \downarrow m) \phi(b) \sim \phi(a) (\psi(w) \downarrow m) \phi(b) = \phi(y)\]
holds by the definition of $\sim$,
as desired.
\end{proof}

The following example is instructive when comparing the definitions in this section.
Let $(W,S)$ be a Coxeter system with length function $\ell : W \to \NN$.
There exists a unique associative product $\circ : W \times W \to W$
with the property that $s\circ s = s $ for $s \in S$ and $v \circ w = vw$ if $v,w \in W$ have $\ell(vw) = \ell(v) + \ell(w)$.
One way to derive this claim is to set $a_s=1$ and $b_s=0$ in \cite[Theorem 7.1]{Humphreys}
and then notice that $\{T_w : w \in W\}$ is a monoid under multiplication; alternatively, see 
the discussion in \cite[\S3.10]{RichSpring}.
The resulting monoid $(W,\circ)$ is often called the \emph{$0$-Hecke monoid} or \emph{Richardson-Springer monoid}.
Suppose $S = \{s_1,s_2,s_3,\dots\}$ is countably infinite, and let $\osim$ be 
the equivalence relation  on words with
\[ i_1i_2\cdots i_m \osim j_1j_2\cdots j_n
\quad\text{if and only if}\quad
s_{i_1} \circ s_{i_2} \circ \cdots \circ s_{i_m} = s_{j_1} \circ s_{j_2} \circ \cdots \circ s_{j_n}.\]
Let $m(i,j) \in \PP \sqcup \{\infty\}$ denote the order of $s_is_j\in W$.
Then $m$ can be any map $\PP\times \PP \to \PP \sqcup \{\infty\}$
with $m(i,j) = m(j,i)$ for all $i,j$ and
$m(i,j) = 1$ if and only if $i=j$.
The description of the monoid $(W,\circ)$ by generators and relations in  \cite[\S3.10]{RichSpring} shows
that $\osim$ is the strongest equivalence relation 
that has $vxw\osim vyw$ whenever $x\osim y$
and that has
$a \osim aa $ and $  ababa\cdots  \osim  babab\cdots$ (both sides with $m(a,b)$ terms)
for all $a,b \in \PP$.
In particular, $\osim$ is a word relation.

\begin{lemma}\label{braid-lem}
Let $a,b,n \in \PP$ and set $v = ababa\cdots$  and $w=babab\cdots$ where both words have length $n$.
Then $v\osim w$ if and only if $m(a,b) \leq n$.
\end{lemma}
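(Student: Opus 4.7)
The plan is to split into the cases $n \geq m(a,b)$ and $n < m(a,b)$, after dispatching the trivial case $a=b$ (where $m(a,b)=1$, $v=w$, and both conditions hold). So assume $a\neq b$ and set $m = m(a,b) \geq 2$, and let $W_0 = \langle s_a, s_b \rangle$, the dihedral subgroup of $W$, whose longest element (when $m<\infty$) is $w_0 = \underbrace{s_as_bs_a\cdots}_{m} = \underbrace{s_bs_as_b\cdots}_{m}$ by the braid relation.

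For the direction $n \geq m$ (so in particular $m$ is finite), I would show by induction on $n$ that both Demazure products $s_a\circ s_b\circ s_a\circ \cdots$ and $s_b\circ s_a\circ s_b\circ\cdots$ of length $n$ equal $w_0$. The base case $n = m$ follows because $ababa\cdots$ of length $m$ is a reduced expression, so its Demazure product equals its ordinary product in $W$, which is $w_0$; and likewise for the other word by the braid relation. For the inductive step, if one of the partial Demazure products already equals $w_0$, then appending another $s_a$ or $s_b$ leaves it unchanged, because $w_0$ is the longest element of $W_0$ and hence $\ell(w_0 s_a) < \ell(w_0)$ and $\ell(w_0 s_b) < \ell(w_0)$ in $W$ (the dihedral reflection subgroup inherits its length function from $W$, since alternating expressions are reduced in $W$). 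Thus $v \osim w$.

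For the direction $n < m$, both $ababa\cdots$ and $babab\cdots$ of length $n$ are reduced expressions in $W$ (this is the defining property of $m(a,b)$). Hence their Demazure products equal the corresponding group products, call them $u_n^a$ and $u_n^b$. The task reduces to showing $u_n^a \neq u_n^b$. I would invoke Matsumoto's theorem: any two reduced expressions of the same element are connected by braid moves. Every available braid move in this alphabet $\{a,b\}$ requires a subword of length exactly $m > n$, so no such move applies to a single-alphabet word of length $n$. Therefore the reduced expressions starting with $a$ and $b$ represent genuinely distinct elements, giving $v \not\osim w$.

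The main obstacle is the distinctness step in the second direction, and the cleanest way around it is precisely this appeal to Matsumoto, which handles $m = \infty$ uniformly (there the alphabet $\{a,b\}$ supports \emph{no} braid moves, so reduced words determine elements outright). A more hands-on alternative would be to realise $W_0$ concretely as the dihedral group with $s_as_b$ of order $m$, write $u_n^a$ and $u_n^b$ as $(s_as_b)^{k}s_a^{\epsilon}$ and $(s_bs_a)^{k}s_b^{\epsilon}$, and compare them inside $W_0$; this works but requires separating the parities of $n$, so I would present the Matsumoto argument first.
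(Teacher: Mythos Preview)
Your proof is correct and follows the same two-case split as the paper: for $n \geq m(a,b)$ an induction (you compute the Demazure product as the longest element $w_0$ of $\langle s_a,s_b\rangle$, while the paper writes the equivalent word-level chain $v = aw' \osim av' \osim v' \osim w' \osim bw' \osim bv' = w$ using $aa\osim a$ and the inductive hypothesis $v'\osim w'$), and for $n < m(a,b)$ the observation that the alternating words are reduced and represent distinct elements, where your Matsumoto argument makes explicit what the paper calls ``clear.'' Be aware that the two case labels in the paper's printed proof appear to be transposed, so do not be thrown when comparing line by line.
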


\begin{proof}
It is clear that $v$ and $w$ are not equivalent under $\osim$ when $ m(a,b) > n$ and that $v\osim w$ when $m(a,b) = n$.
If $m(a,b) < n$ then
by induction $v =aw' \osim av' \osim v' \osim w'  \osim bw' \osim bv'= w$
for the words $v ' =ababa \cdots $ ($n-1$ letters) and $w' = babab\cdots$ ($n-1$ letters).
\end{proof}

\begin{proposition} \label{osim-prop1}
The relation $\osim$ is algebraic if and only if $m(i,j) \leq m(i+1,j+1)$ for all $i,j \in \PP$,
and uniformly algebraic if and only if 
$m(i,j) \leq m(a,b)$ whenever $0<|a-b| \leq |i - j|$.
\end{proposition}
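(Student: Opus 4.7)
My plan is to reduce each equivalence to a verification that the generators of $\osim$---the idempotent relations $a \osim aa$ and the braid relations $ababa\cdots \osim babab\cdots$ of length $m(a,b)$---are preserved by the operations appearing in Definitions \ref{alg-def} and \ref{alg-def2}. Since $\osim$ is a congruence by construction, condition (a) of Definition \ref{alg-def} is automatic, so only condition (b) matters for algebraicity; and by Lemma \ref{uni-suff-lem}, once algebraicity is settled, the uniformly algebraic property reduces to checking invariance under order-preserving injections $\phi : [\max(v)] \to \PP$ applied letterwise.

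For the algebraic characterization, necessity is easy: take the braid words $v = (i+1)(j+1)(i+1)\cdots$ and $w = (j+1)(i+1)\cdots$ of length $n = m(i+1, j+1)$, which are $\osim$-equivalent, and apply condition (b) of Definition \ref{alg-def} with $m = 1$ and $I = \{2, 3, \ldots\}$ to obtain $ijij\cdots \osim jiji\cdots$ of length $n$; Lemma \ref{braid-lem} then forces $m(i,j) \leq n$. For sufficiency, assume $m(i,j) \leq m(i+1, j+1)$ for all $i,j$; iterating yields $m(x-m, y-m) \leq m(x, y)$ whenever $x, y > m$. I then check that the operation $T : w \mapsto (w \cap [m+1, n]) \downarrow m$ carries each generator, applied in context, to an $\osim$-relation or an equality. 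An idempotent move $a \leftrightarrow aa$ either survives as $(a-m) \leftrightarrow (a-m)(a-m)$ or collapses to a trivial equality. A braid of length $m(x, y)$ splits by how many of $x, y$ lie in $[m+1, n]$: if both do, its image is a braid of the same length in letters $x-m, y-m$, which is valid by Lemma \ref{braid-lem} because $m(x-m, y-m) \leq m(x, y)$; if neither does, both sides map to the same word; if exactly one letter survives, then the two sides become powers of that letter whose exponents differ by at most one, and these are $\osim$-equivalent by the idempotent relation. Propagating through the congruence and transitive closure then delivers condition (b).

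For the uniformly algebraic characterization, necessity uses braid words once more: given $0 < |a-b| \leq |i-j|$ with $m(a,b) = n$ finite, choose an order-preserving injection $\phi : [\min(a,b), \max(a,b)] \to \PP$ with $\phi(a) = i$ and $\phi(b) = j$, which exists because $|i-j| \geq |a-b|$ leaves enough room for the intermediate values; condition (c) of Definition \ref{alg-def2} then yields $ijij\cdots \osim jiji\cdots$ of length $n$, so $m(i,j) \leq n = m(a,b)$ by Lemma \ref{braid-lem}. For sufficiency, observe that the hypothesis forces $m$ to depend only on $|i-j|$ and to be weakly decreasing in it, hence in particular $m(i,j) \leq m(i+1, j+1)$, so $\osim$ is already algebraic; by Lemma \ref{uni-suff-lem} it therefore suffices to show that $\phi(v) \osim \phi(w)$ whenever $v \osim w$ and $\phi : [\max(v)] \to \PP$ is an order-preserving injection. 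This follows by the same generator-by-generator strategy: idempotent relations are sent to idempotent relations, and a braid $abab\cdots \osim baba\cdots$ of length $m(a,b)$ is sent to a braid in letters $\phi(a), \phi(b)$ of the same length, which is $\osim$-valid because $|\phi(a)-\phi(b)| \geq |a-b|$ combined with the hypothesis gives $m(\phi(a), \phi(b)) \leq m(a,b)$. The subtlest step---and the main obstacle---is the mixed subcase of the algebraic sufficiency, where $T$ hits a braid generator with only one of its letters in $[m+1, n]$, forcing the use of idempotency to absorb the length mismatch between the two shifted sides; this is the unique point at which the two families of generators of $\osim$ genuinely interact.
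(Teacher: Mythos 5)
Your proposal is correct and follows essentially the same route as the paper: necessity via braid words together with Lemmas~\ref{bas2-lem}, \ref{uni-suff-lem}, and \ref{braid-lem}, and sufficiency by checking the generators $a \osim aa$ and the braid relations under $w \mapsto (w\cap I)\downarrow m$ and under order-preserving injections, with your explicit treatment of the mixed subcase filling in what the paper dismisses as ``clear.'' Only trivial adjustments are needed: in Definition~\ref{alg-def}(b) the interval $I$ is finite, so take $I=\{2,\dots,N\}$ with $N$ large, and in the uniform necessity send $a\mapsto\min(i,j)$, $b\mapsto\max(i,j)$ (using $m(i,j)=m(j,i)$) so that $\phi$ is genuinely order-preserving.
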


This means that if $\osim$ is uniformly algebraic then $m(i,j) = m(i+1,j+1)$ for all $i,j \in \PP$.

\begin{proof}
Combining Lemmas~\ref{bas2-lem}, \ref{uni-suff-lem}, and \ref{braid-lem} shows that the given conditions are necessary.
Condition (a) in Definition~\ref{alg-def} holds for $\osim$ by construction.

Assume $m(i,j) \leq m(i+1,j+1)$ for all $i,j \in \PP$
and let $I = [k+1,n]$ for some $k, n \in \NN$.
To check condition (b) in Definition~\ref{alg-def},
it suffices 
to show that if 
$v=  ababa\cdots  $ and $w=  babab\cdots$ 
for some $a,b \in \PP$,
 where both words have $m(a,b)$ letters,
then $(v\cap I) \downarrow k \osim (w \cap I) \downarrow k$.
This is clear when $I \cap \{a,b\} \neq \{a,b\}$
and holds when $\{a,b\} \subset I$ by Lemma~\ref{braid-lem}.
Thus $\osim$ is algebraic.
It follows by Lemmas~\ref{uni-suff-lem} and \ref{braid-lem} that the condition for
$\osim$ to be uniformly algebraic is also sufficient.
\end{proof}

A generator $s_i$ belongs to the center $Z(W)$ of $W$ if and only if $m(i,j) = 2$ for all $j \in \PP\setminus\{i\}$.
The group $W$ is abelian if and only if $W=Z(W)$, which occurs when $m(i,j)=2$ for all $i<j$.

\begin{proposition}\label{osim-prop2}
If $W$ is abelian, then 
 $\osim$ is uniformly algebraic and of finite-type.
If $W$ is non-abelian and $p \in \PP$ is minimal such that $s_{p}\notin Z(W)$,
then 
 $\osim$ is algebraic and of finite-type if and only if for some $q\in\PP$ 
it holds that $m(i,i+q) = 3$ and $m(i,j) = 2$ for all $p\leq i<j\neq i+q$.
If these conditions hold, then the word relation $\osim$ is uniformly algebraic when $p=q=1$
but not $\P$-algebraic over any field when $p>1$ or $q>1$.
\end{proposition}

We discuss the bialgebras $\bfSigma^{(\osim)}$ and $\bfSigma_\P^{(\osim)}$
when $\osim$ has these properties   in the next section.

\begin{proof}
The proof depends on the classification of finite Coxeter groups.
The $\osim$-equivalence classes in $\Words_n$ are in bijection with the elements of the parabolic subgroup
$ \langle s_1,s_2,\dots,s_n \rangle \subset W$,
so $\osim$ is of finite-type if and only if each of these subgroups is finite.
In the listed cases, each subgroup of this form is a finite direct product of finite symmetric groups, and is therefore finite.

If $W$ is abelian then both conditions in Proposition~\ref{osim-prop1} obviously hold, so $\osim$ is uniformly algebraic.
Assume $W$ is non-abelian and we have $m(i,i+q) = 3$ and $m(i,j) = 2$ for all $p\leq i<j\neq i+q$,
where $p \in \PP$ is minimal with $s_p \notin Z(W)$.
The first condition in Proposition~\ref{osim-prop1} is clear, and the second condition holds if and only if $p=q=1$.
Hence $\osim$ is uniformly algebraic when $W$ is non-abelian if and only if $p=q=1$.
Assume instead that $p>1$ or $q>1$. In this case we have $12 \osim 21$,
but the $\osim$-equivalence class of the 2-letter word $p(p+q)$ consists of all words of the form $pp\cdots p (p+q)(p+q)\cdots (p+q)$
and so contains exactly one $(12,\emptyset)$-destandardization and no $(21,\emptyset)$-destandardizations.
Thus condition (a) in Definition~\ref{alg-def3} fails
so $\osim$ is not $\P$-algebraic.

 Continue to assume $W$ is non-abelian and $p \in \PP$ is minimal with $s_p \notin Z(W)$.
Suppose $\osim$ is algebraic and of finite-type, 
so that $m(i,j) \leq m(i+1,j+1)$ for all $i,j \in \PP$.
We cannot have $m(i,j) > 3$ for any $i<j$ since then $3 < m(j,2j-i)$
and $\langle s_i, s_j, s_{2j-i}\rangle$ would be infinite.
Since $s_p\notin Z(W)$ but $\{s_1,s_2,\dots,s_{p-1}\} \subset Z(W)$, there exists a minimal $q \in \PP$ such that $m(p,p+q) = 3$.
Then  $m(i,i+q) = 3$ for all $i \geq p$. We cannot have $m(i,j) = 3$ for any $p\leq i<j \neq i+q$ as then we would also have $m(i+q,j+q)=3$
so the Coxeter graph of $(W,S)$
would contain a cycle and some $ \langle s_1,s_2,\dots,s_n \rangle$ would be infinite.
Hence $m(i,i+q) = 3$ and $m(i,j) = 2$ for all $p\leq i<j\neq i+q$.
\end{proof}

\section{Examples}\label{example-sect}

This section presents some further examples of word relations and related bialgebras.

\begin{example}\label{nsym-ex}
Define the \emph{commutation relation} on words to be the relation with $v \sim w$ if
$w$ is formed by rearranging the letters of $v$.
Both $\bfSigma^{(\sim)}\subset \bfW$ and $\bfSigma_\P^{(\sim)}\subset \bfW_\P$
are graded sub-bialgebras since $\sim$ is homogeneous and uniformly algebraic.
Recording multiplicities of the letters in each equivalence class identifies 
 $\sSigma_n^{(\sim)}$ with $\NN^n$.
Given $\alpha \in \NN^n$,
let $[[\alpha]] = \sum_w [w,n]  \in \sSigma_n^{(\sim)}$ where the sum is over all words $w$ with $\max(w) \leq n$ and with exactly  $\alpha_i$ letters equal to $i$.
The product and coproduct of $\bfSigma^{(\sim)}$
then have the formulas 
$\nabla_\shuffle([[\alpha]] \otimes [[\beta]]) = [[\alpha\beta]] $ where $\alpha\beta$ means concatenation
and
$\Delta_\odot([[\alpha]]) = \sum_{ \alpha = \alpha'+\alpha''} [[\alpha']] \otimes [[\alpha'']]$ where the sum is over $\alpha',\alpha'' \in \NN^n$.

Let $H_n \in \sSigma_\P^{(\sim)}$ denote the $n$-letter packed word $111\cdots 1$,
so that $H_0 = \emptyset$ is the unit element in $\bfSigma_\P^{(\sim)}$.
Each $H_n$ is homogeneous of degree $n$, 
and
the algebra structure on $\bfSigma_\P^{(\sim)}$ is just the polynomial algebra $\kk\langle H_1,H_2,\dots\rangle$
where $H_1,H_2,\dots$ are interpreted as non-commuting indeterminates.
The coproduct of $\bfSigma_\P^{(\sim)}$ satisfies $\Delta_\odot(H_n) = \sum_{i=0}^n H_i \otimes H_{n-i}$.
This graded Hopf algebra is commonly known as the 
 algebra of \emph{noncommutative symmetric functions} $\NSym$ \cite{GKLLRT} or \emph{Leibniz-Hopf algebra}.
\end{example}

\begin{example}\label{k-ex}
Define \emph{$K$-equivalence} to be the strongest algebraic word relation with
$a\sim aa$ for all $a \in \PP$.
This is the case of the relation $\osim$ described in the previous section when $(W,S)$ is a universal Coxeter, i.e., when $m(i,j) = \infty$ for all $i<j$.
 $K$-equivalence is therefore uniformly algebraic but neither homogeneous nor of finite-type.
One has $v \sim w$ if and only if $v$ and $w$ coincide after all adjacent repeated letters are combined.

Each equivalence class under $\sim$ contains a unique reduced word with no equal adjacent letters,
which we call a \emph{partial (small) multi-permutation}.
A \emph{(small) multi-permutation} is a partial multi-permutation that is also a packed word.
This notion of a multi-permutation 
is what is intended in \cite[Definition 4.1]{LamPyl},
which omits our condition about being a packed word 
(and so inadvertently gives the definition of a partial multi-permutation).

For a partial multi-permutation $w$ with $\max(w) \leq n$, define $[[w,n]] = \sum_{u \sim w} [u,n] \in \sSigma_n^{(\sim)}$.
Given an arbitrary list $w^1,w^2,\dots$ of distinct partial multi-permutations with letters in $[n]$ and coefficients $c_1,c_2,\dots \in \kk$, 
we abbreviate our notation by setting
\[[[ c_1w^1 + c_2w^2  +\dots ,n]] = c_1 [[w^1,n]] + c_2 [[w^2,n]]    + \dots \in \sS^{(\sim)}[n]\]
and \[[[ w^1\otimes w^2 ,n]] = [[w^1,n]] \otimes [[w^2,n]] \in \sS^{(\sim)}[n]\otimes \sS^{(\sim)}[n].\]
If $v$ and $w$ are partial multi-permutations with letters in $[m]$ and $[n]$, respectively,
then 
\[\nabla_\shuffle([[v,m]] \otimes [[w,n]]) = [[v \star (w \uparrow m), m+n]] \in \sS^{(\sim)}[m+n]\]
 where $\star$ is the \emph{multishuffle product} described by \cite[Proposition 3.1]{LamPyl},
while \[\Delta_\odot([[w,n]]) = [[ \blacktriangle w, n]] \in \sS^{(\sim)}[n] \otimes \sS^{(\sim)}[n]\]
where $\blacktriangle$ is the \emph{cuut coproduct} defined in \cite[\S3]{LamPyl}.
From \eqref{spdelta-eq} and \eqref{spnabla-eq}, these formulas completely determine the (co)product of
the species coalgebroid
$(\sS^{(\sim)},\nabla_\shuffle,\iota_\shuffle, \Delta_\odot,\epsilon_\odot)$.

The linearly compact Hopf algebra $(\hat\bfSigma_\P^{(\sim)},\nabla_\shuffle,\iota_\shuffle, \Delta_\odot,\epsilon_\odot)$
is what Lam and Pylyavskyy call the \emph{small multi-Malvenuto-Reutenauer bialgebra} $\mMR$ \cite[\S4]{LamPyl}.
Theorem~\ref{packed-alg-thm} for the special case of $K$-equivalence recovers  \cite[Theorem 4.2]{LamPyl}, 
which asserts somewhat imprecisely that ``$\mMR$ is a bialgebra'' 
(despite the fact that its coproduct
only makes sense as a map $\mMR \to \mMR \htimes \mMR$).
The linearly compact Hopf algebra $\mMR$ is the algebraic dual of what Lam and Pylyavskyy call the 
\emph{big multi-Malvenuto-Reutenauer Hopf algebra} $\mathfrak{M}\text{MR}$ \cite[\S7]{LamPyl}.
The assertion that $\mathfrak{M}\text{MR}$ has an antipode \cite[Proposition 7.8]{LamPyl}
follows from Theorem~\ref{packed-alg-thm} via this duality.
\end{example}

\begin{example}
Define the \emph{$K$-commutation relation}
to be the transitive closure $\sim$ of $K$-equivalence and the commutation relation.
 This is the weakest word relation,
 in the sense that any word relation is a subrelation of $\sim$.
As the relation $\sim$  is the special case of $\osim$ when $W$ is abelian,
it is uniformly algebraic, inhomogeneous, and of finite-type by Proposition~\ref{osim-prop2}.

 The $\sim$-equivalence classes in $\Words_n$ are in bijection with subsets $I \subset[n]$.
 All packed words $w$ with $\max(w) = n$ belong to the same $\sim$-equivalence class.
 If we let $\kappa_n \in \sSigma_\P^{(\sim)}$ denote the sum of these words,
 then $\kappa_n = \nabla_\shuffle^{(n-1)}(x\otimes x \otimes \cdots \otimes x)$
 for $x = \kappa_1 = 1 + 11 + 111 + \dots$.
Thus
$\bfSigma_\P^{(\sim)}$ coincides as an algebra with $\kk[x]$,
but its coproduct has $\Delta_\odot(x) = x\otimes 1 + x\otimes x + 1\otimes x$. 
This is the $q=1$ version of the \emph{univariate infiltration bialgebra}
discussed, for example, in \cite[\S2.3.3.4]{Hoang}.
\end{example}

\begin{example}\label{knuth-ex}
Define \emph{Knuth equivalence} to be the strongest algebraic word relation with
\[
bac \sim bca,
\qquad
acb \sim cab,
\qquad
aba \sim baa,\qquand bab\sim bba
\]
for all  $a<b<c$.
This relation is of ubiquitous significance in combinatorics.
Its equivalence classes are the sets of words with the same insertion tableau under the RSK correspondence.

Suppose $\lambda = (\lambda_1 \geq \lambda_2 \geq \dots \geq \lambda_m > 0)$ is an integer partition
and $w=w^1w^2\cdots w^m$ is the factorization of a word $w$ into maximal weakly increasing subwords.
Slightly abusing standard terminology, 
say that $w$ is a \emph{semistandard tableau} of shape $\lambda$
if $\ell(w^{i}) = \lambda_{m+1-i}$ for $i \in [m]$ and
$w_j^i > w_j^{i+1}$ whenever both sides are defined. For example, $\emptyset$, $645123$, $2211$, and  $655133$
are semistandard tableaux of the respective shapes $\emptyset$, $(3,2,1)$, $(2,2)$, and $(3,2,1)$.

Each Knuth equivalence class contains a unique semistandard tableau $T$.
When $\max(T) \leq n$,
write $[[T,n]] = \sum_{w \sim T} [w,n] \in \sSigma_n^{(\sim)}$.
Since $T \cap [n]$ is a semistandard tableau whenever $T$ is, 
it follows that the product and coproduct of $\bfSigma^{(\sim)}$
have the formulas
\be
\label{knuth-form}
\nabla_\shuffle([[U,m]]\otimes [[V,n]]) = \sum_T [[T,m+n]]
\quand
\Delta_\odot([[T,n]]) = \sum_{T\sim UV} [[U,n]] \otimes [[V,n]]
\ee
where the first sum is over semistandard tableaux $T$ with $T \cap [m] = U$ and $T\cap [m+1,\infty) \sim V \uparrow m$,
and the second sum is over pairs of semistandard tableaux $U$ and $V$ with $T \sim UV$.

Similar formulas for the (co)product of the Hopf algebra $\bfSigma_\P^{(\sim)}$ 
are noted in \cite{LeclercThibon,PylPat}. 
The subalgebra of $\bfSigma_\P^{(\sim)}$
spanned by Knuth equivalence classes of permutations
 is the \emph{Poirier-Reutenauer Hopf algebra} $\PR$
\cite{PoirierReutenauer}.
Theorem~\ref{packed-alg-thm} for Knuth equivalence recovers \cite[Theorem 3.1]{PoirierReutenauer}.
\end{example}

\begin{example}\label{kknuth-ex}
Recall that  \emph{$K$-Knuth equivalence} is the strongest algebraic word relation with
 \be\label{kk-eq}
bac \sim bca,
\qquad
acb \sim cab,
\qquad
aba \sim bab,
\qquand
a\sim aa\ee
for all integers $a<b<c$.
Proposition~\ref{g-prop} implies that this relation is uniformly algebraic.
Though less well-studied than its homogeneous analogue, $K$-Knuth equivalence 
appears to be an equally fundamental case of interest.
Its relationship with \emph{Hecke insertion} \cite{BKSTY} 
is parallel to that of Knuth equivalence with the RSK correspondence.

Again with minor abuse of standard terminology,
define an \emph{increasing tableau} 
to be a semistandard tableau with no equal adjacent letters,
i.e., in which every weakly increasing consecutive subword is strictly increasing.
For example, the words $\emptyset$ or $645123$ or $5612$ or $545234$ are all increasing tableaux
under our definition.

There are finitely many increasing tableaux with all letters in a given finite set \cite[Lemma 3.2]{PylPat}
and every $K$-Knuth equivalence class contains at least one increasing tableau \cite[Lemma 58]{GMPPRST}.
Thus, $K$-Knuth equivalence is of finite-type
and a somewhat improved way of indexing the elements of $\sSigma^{(\sim)}_n$
is to define $[[T,n]] = \sum_{w \sim T} [w,n]$ for each increasing tableau $T$ with $\max(T) \leq n$. 
The usefulness of this construction is limited, since it is not known how to easily detect when 
two increasing tableaux are $K$-Knuth equivalent.
There is
an algorithm to compute all $K$-Knuth classes of words with a given set of letters, however \cite{GMPPRST}.

It is an open problem to find an irredundant indexing set for $K$-Knuth equivalence classes,
with respect to which one can
describe explicitly the product and coproduct of the bialgebras $\bfSigma^{(\sim)}$ and $\bfSigma_\P^{(\sim)}$.
Patrias and Pylyavskyy \cite{PylPat} refer to the latter as the \emph{$K$-theoretic Poirier-Reutenauer bialgebra} $\KPR$.
 They note that $\KPR$ is not a Hopf algebra \cite[\S4]{PylPat}
 and give some (necessarily inexplicit) formulas for its product and coproduct; see \cite[Theorems 4.3, 4.5, 4.10, and 4.12]{PylPat}.
Theorem~\ref{packed-alg-thm} for $K$-Knuth equivalence recovers \cite[Theorem 4.15]{PylPat}.

As noted in \cite[Remark 5.10]{BuchSamuel} and \cite[\S4]{GMPPRST},
the set of reduced words in a $K$-Knuth equivalence class 
may fail to be spanned by the homogeneous relations $bac\sim bca$, $acb \sim cab$, and $aba\sim bab$ for $a<b<c$.
The graded sub-bialgebra
$\bfSigma_\Reduced^{(\sim)} \subset \bfSigma^{(\sim)}$ 
is thus in some sense not any easier to study.

We mention one other property of this relation.
Define \emph{weak $K$-Knuth equivalence} to be the word relation $\approx$ 
with $v \approx w$ if $v\sim w$ or if $v = v_1v_2v_3\cdots v_n$ and $w=v_2v_1 v_3 \cdots v_n$.
Let $w^\r$ be the word obtained by reversing $w$.
If $v\approx w$ then $v^\r v \sim w^\r w$ since $baab \sim bab \sim aba \sim abba$.
Buch and Samuel state the converse as \cite[Conjecture 7.10]{BuchSamuel}, which appears to be still unresolved:

\begin{conjecture}[Buch and Samuel \cite{BuchSamuel}]\label{bs-conj}
Two words $v$ and $w$ are weakly $K$-Knuth equivalent if and only if $v^\r v $ and $w^\r w$ are $K$-Knuth equivalent.
\end{conjecture}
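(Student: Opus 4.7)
The easy direction is already established in the paragraph preceding the conjecture, so only the implication ``if $v^\r v \sim w^\r w$ then $v \approx w$'' requires proof. The natural tool here is Hecke insertion: it is known from \cite{BuchSamuel,BKSTY} that two words are $K$-Knuth equivalent exactly when they have the same Hecke insertion tableau $P$, so the hypothesis becomes $P(v^\r v) = P(w^\r w)$ and one must show this forces $v \approx w$.

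The plan is to construct a ``folding'' procedure that recovers the $\approx$-class of $v$ from $P(v^\r v)$. The key structural feature is that $v^\r v$ contains the pattern $v_1 v_1$ at its centre, which the relation $a \sim aa$ collapses to a single $v_1$; peeling letters alternately off the left and the right and tracking the induced sequence of Hecke row-insertions should reduce $v^\r v$ to something $K$-Knuth-close to $v$, with any residual ambiguity coming from the first two letters (which is exactly the ``swap-first-two-letters'' move generating $\approx$ beyond $\sim$). If this peeling can be carried out purely at the level of the insertion tableau, the resulting output is a well-defined $\approx$-invariant of $v^\r v$, and the conjecture follows.

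A complementary angle, to be run in parallel, is to look for a $\approx$-invariant that provably factors through $v \mapsto v^\r v$ and that is fine enough to separate $\approx$-classes. Candidates include the $K$-theoretic Stanley symmetric function $G_v$, the shape (or multiset of row-lengths) of the Hecke recording tableau, and more generally the image of $v$ under the canonical morphism from $\KPR$ into a symmetric function completion of the type discussed in Section~\ref{comwor-sect}. If any such invariant is both $\approx$-complete and determined by $P(v^\r v)$, that would settle the conjecture without needing an explicit folding algorithm.

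The main obstacle is the absence of any known irredundant indexing set or normal form for $K$-Knuth classes, flagged as an open problem in Section~\ref{example-sect} immediately above Conjecture~\ref{bs-conj}; without such a normal form, it is hard to verify that a proposed folding operation is well-defined on tableaux rather than merely on representatives, and exhaustive case analysis becomes impractical beyond tiny alphabets. Moreover, the weak $K$-Knuth move does not sit naturally inside the Hecke insertion framework, so even with complete information about $P(v^\r v)$ the final passage to the $\approx$-class of $v$ seems to demand a genuinely new combinatorial idea. This is presumably why the conjecture has resisted proof since \cite{BuchSamuel}.
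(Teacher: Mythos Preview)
This statement is labeled as a \emph{conjecture} in the paper, and the paper does not offer a proof: the text immediately preceding it establishes only the forward direction (via $baab \sim bab \sim aba \sim abba$) and explicitly says that the converse ``appears to be still unresolved.'' So there is no proof in the paper for you to be compared against.

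Your proposal is not a proof either, and you seem aware of this: you outline two heuristic strategies (a folding procedure on Hecke insertion tableaux, and a search for a separating $\approx$-invariant that factors through $v \mapsto v^\r v$), and then correctly identify the obstacles that prevent either from being carried out. In particular, you note that the absence of a normal form for $K$-Knuth classes blocks verifying well-definedness of any folding map, and that the weak move $v_1v_2\cdots \approx v_2v_1\cdots$ has no clean Hecke-insertion interpretation. These are accurate diagnoses, but they amount to an explanation of why the problem is hard rather than a proof. Nothing in the proposal constitutes a genuine argument for the implication $v^\r v \sim w^\r w \Rightarrow v \approx w$; the candidate invariants you list (such as $G_v$) are not known to separate $\approx$-classes, and you do not attempt to show they do. The proposal is best read as a reasonable research plan for attacking an open problem, not as a proof attempt.
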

\end{example}

\begin{example}\label{hecke-ex}
One avoids many pathologies of $K$-Knuth equivalence by considering the stronger relation of
\emph{Hecke equivalence}, which is
the strongest algebraic word relation $\sim$ with
\[
ac \sim ca,
\qquad
aba \sim bab,\qquand a\sim aa
\]
for all positive integers $a<b<c$, so that $13\sim 31$ but $12 \not \sim 21$ \cite[Definition 6.4]{BuchSamuel}.
As explained in Proposition~\ref{osim-prop2}, this is the only case of the relation $\osim$ 
that is uniformly algebraic and of finite-type for which the ambient Coxeter group is non-abelian.
Each set $\sSigma_n^{(\sim)}$ is in bijection with the symmetric group $S_{n+1}$,
which we view 
as the set of words of length $n+1$
containing each $i \in [n+1]$ as a letter exactly once.

Given $\pi \in S_{n+1}$, 
let $[[\pi]] = \sum_w [w,n] \in \sSigma_n^{(\sim)}$
 denote the sum over \emph{Hecke words} for $\pi$, i.e.,
 words $w=w_1w_2\cdots w_m$ 
 with $\pi = s_{w_1} \circ s_{w_2} \circ \cdots \circ s_{w_m}$
where $\circ$ is the product defined
in Section~\ref{uniform-sect} and $s_a=(a,a+1) \in S_{n+1}$. 
The coproduct of  $\bfSigma^{(\sim)}$ satisfies
$\Delta_{\odot}([[\pi]]) = \sum_{\pi = \pi' \circ \pi''} [[\pi']]\otimes [[\pi'']]
$ where the sum is over $\pi',\pi'' \in S_{n+1}$.
It is an open problem to describe the product $\nabla_\shuffle([[\pi']] \otimes [[\pi'']] )$.

We have a better understanding of the graded bialgebra of reduced classes
$ \bfSigma_\Reduced^{(\sim)}$ when $\sim$ is Hecke equivalence.
This bialgebra is the main topic of our complementary paper \cite{M1}, which derives a recursive formula for the 
product of any two basis elements in $\sSigma_\Reduced^{(\sim)}$.

For another point of comparison with $K$-Knuth equivalence,
define \emph{weak Hecke equivalence} to be the word relation $\approx$ 
with $v \approx w$ if $v\sim w$ or if $v = v_1v_2v_3\cdots v_n$ and $w=v_2v_1 v_3 \cdots v_n$.
The analogue of Conjecture~\ref{bs-conj} for Hecke equivalence is known to be true:

\begin{proposition}[{\cite[Theorem 6.4]{HMP2}}]
Two words $v$ and $w$ are weakly Hecke equivalent if and only if $v^\r v $ and $w^\r w$ are Hecke equivalent.
\end{proposition}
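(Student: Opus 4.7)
The plan is to prove the two implications separately. For the forward direction, I would first observe that all Hecke generating relations $ac\sim ca$, $aba\sim bab$, and $a\sim aa$ are palindromic, so Hecke equivalence commutes with word reversal. Combined with the congruence property of $\sim$, this yields $v\sim w\Rightarrow v^{\r}v\sim w^{\r}w$. For the remaining generator of $\approx$, which swaps the first two letters $v_1,v_2$ of $v$, I would argue directly: $v^{\r}v$ contains the central subword $v_2v_1v_1v_2$ while $w^{\r}w$ contains $v_1v_2v_2v_1$. Applying $aa\sim a$ at the center of each reduces the problem to showing $v_2v_1v_2\sim v_1v_2v_1$, which follows from the braid relation when $|v_1-v_2|=1$ and from two applications of commutation when $|v_1-v_2|\geq 2$ (the case $v_1=v_2$ being trivial).

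The backward direction is the substantive part, and I would pass to the $0$-Hecke monoid. Since Hecke equivalence classes of words with letters in $[n]$ biject with $S_{n+1}$ via the Demazure product map $v\mapsto\pi_v$, the hypothesis $v^{\r}v\sim w^{\r}w$ becomes the equation $\pi_{v^{\r}v}=\pi_{w^{\r}w}$ in $S_{n+1}$. A direct verification shows that the assignment $\pi_v \mapsto \pi_{v^{\r}v}$ is well-defined and factors through a map $\Phi : S_{n+1}\to\mathrm{Inv}(S_{n+1})$ sending $\pi$ to $\pi^{-1}\circ\pi$, whose image is always an involution. The theorem is then equivalent to the assertion that the fibers of $\Phi$ coincide with the equivalence classes of $\approx$ once $\approx$ is transported to $S_{n+1}$; this recasts the question as a classification of the \emph{Hecke atoms} of an involution $z\in\mathrm{Inv}(S_{n+1})$.

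The main obstacle is thus showing that any two Hecke atoms of a fixed involution $z$ are connected by a chain of first-two-letter swaps together with $\sim$-moves. I would attempt this by induction on the Coxeter length $\ell(z)$: given atoms $\pi,\pi'$ of $z$, locate a common left descent or initial simple reflection that allows both to be reduced to atoms of a shorter involution $z'$, and then invoke the inductive hypothesis. The key technical difficulty is the idempotent relation $a\sim aa$: because atoms may be represented by Hecke words of arbitrary length, the inductive reduction must carefully distinguish between letters that are \emph{essential} to the Demazure product and letters that are absorbable by idempotency, and one must verify that both kinds of letters can be matched up between $\pi$ and $\pi'$ via first-letter swaps. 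This parallels the structure theory of atoms for classical reduced-word involutions developed in the Hamaker--Marberg--Pawlowski program, and I would expect the argument in \cite{HMP2} to adapt those techniques to accommodate the Hecke idempotent.
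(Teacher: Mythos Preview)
The paper does not prove this proposition: it is stated with a citation to \cite[Theorem 6.4]{HMP2} and no argument is given in the text. So there is no ``paper's own proof'' to compare your attempt against.

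That said, a brief comment on your sketch. The forward direction is fine. For the backward direction, your reformulation in terms of the map $\Phi(\pi)=\pi^{-1}\circ\pi$ on $S_{n+1}$ is correct (the identity $\pi_{v^\r}=(\pi_v)^{-1}$ holds because inversion is a Bruhat-order-preserving anti-automorphism fixing each $s_i$, and the Demazure product of a word is the Bruhat-maximal subword product). Your proposed induction on $\ell(z)$ is indeed the shape of the argument in \cite{HMP2}, but your description of the inductive step is too vague to count as a proof: the actual obstacle is that two Hecke atoms of $z$ need not share a left descent, and the content of the theorem is precisely the combinatorial lemma that lets you manufacture a common initial letter after applying first-two-letter swaps and $\sim$-moves. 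You have identified where the difficulty lies but not how to overcome it; for the full argument one really does need the structural results on atoms of involutions developed in \cite{HMP2}.
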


\end{example}

\begin{example}
Fix integers $p,q \in \PP$ and define $\approx$ to be the strongest algebraic word relation with
\ben
\item[(i)]  $a(a+q)a \approx (a+q)a(a+q)$ for all integers $a \geq p$,
\item[(ii)] $ab\approx ba$ for all positive integers $a<b$ with $a < p$ or $b \neq a+q$, and
\item[(iii)] $a \approx aa$ for all positive integers $a$.
\een
When $p=q=1$ this relation coincides with Hecke equivalence from Example~\ref{hecke-ex}.
If $\min\{p,q\}>1$ then $\approx$ 
corresponds to the cases of  $\osim$ in Proposition~\ref{osim-prop2} that are algebraic and of finite-type but not $\P$-algebraic.
In the latter situation $\bfSigma^{(\approx)}$ is a bialgebra,
but $\bfSigma_\P^{(\approx)}$ is not a sub-bialgebra of $\bfW_\P$.
If $p=1$ and we write
  $\sim$ for Hecke equivalence,
then the subspace $^{(q)}\bfSigma^{(\approx)} := \bigoplus_{r \in \NN} \bfSigma_{qr}^{(\approx)} \subset \bfSigma^{(\approx)}$
is a sub-bialgebra  with 
$^{(q)}\bfSigma^{(\approx)} \cong \bfSigma^{(\sim)} \otimes \bfSigma^{(\sim)} \otimes \cdots \otimes \bfSigma^{(\sim)}$ ($q$ factors).
\end{example}

\begin{example}\label{referee-ex}
Let $\sim$ be the transitive, reflexive closure of the relation that satisfies 
\[puvuq \sim puvq\quad\text{for all words $p$, $q$, $u$, and $v$.}\]
We refer to this relation as \emph{left-regular band (LRB) equivalence}. 
It is straightforward to check that LRB equivalence is a uniformly algebraic word relation of finite-type. 
The reduced words for this relation are the words with all distinct letters, often called \emph{injective words} or \emph{partial permutations}.
Distinct reduced words for LRB equivalence are never equivalent.
The quotient of the free monoid by $\sim$ is the \emph{free left-regular band} discussed, for example, in \cite[\S1.3]{Brown}.
The packed injective words are precisely the permutations of $[n]$ for all $n \in \NN$, which index a basis
for $\bfSigma_\P^{(\sim)}$. By considering this basis, 
it is easy to see that the algebra structures on $\bfSigma_\P^{(\sim)}$ and 
the Malvenuto-Poirier-Reutenauer Hopf algebra $\FQSym$  mentioned at the end of Section~\ref{word-sect}
are isomorphic. 
The coproduct for  $\bfSigma_\P^{(\sim)}$ is more complicated than for $\FQSym$, however, and is no longer graded.
\end{example}

Many other algebraic word relations 
appear in the literature;
 for example, 
the \emph{hypoplactic relation} (see \cite[Definition 4.16]{KrobThibon} or \cite[Definition 4.2]{Novelli98}),
\emph{sylvester equivalence} (see \cite[Definition 8]{HNT}),
\emph{hyposylvester equivalence} and \emph{metasylvester equivalence} (see \cite[\S3]{NovelliThibon}),
\emph{Baxter equivalence} (see \cite[Definition 3.1]{Giraudo}),
and the \emph{ta\"iga relation} (see \cite[Eq.\ (8)]{Priez})
are all uniformly algebraic, homogeneous word relations.
For sylvester and Baxter equivalence,
the associated Hopf algebra $\bfSigma_\P^{(\sim)}$ recovers the \emph{Loday-Ronco algebra} of planar binary trees \cite{AguiarSottile2,LR1}
and the \emph{Baxter Hopf algebra} of twin binary trees \cite{Giraudo}, respectively.

We mention one other miscellaneous example which will be of significance in Section~\ref{comwor-sect}.

\begin{example}\label{exotic-ex}
Define \emph{exotic Knuth equivalence} to be the strongest algebraic word relation with
\[
bac \sim bca,
\qquad
acb \sim cab,
\qquad
bba \sim bab \sim abb,
\qquand
xyzy\sim yzyx
\]
for all positive integers $a<b<c$ and $x\leq y < z$.
Proposition~\ref{g-prop}  implies that $\sim$ is homogeneous and uniformly algebraic.
This relation does not seem to have been studied previously.
A sensible invariant to consider is the 
sequence $(d_n)_{n= 0,1,2,\dots}$ giving the graded dimension of $\bfSigma_\P^{(\sim)}$,
i.e., in which $d_n$ counts the $\sim$-equivalence classes of packed words of length $n$.
This sequence starts as
\[ (d_n)_{n=0,1,2,\dots} = (1, 1, 3, 9, 31, 110, 412, 1597, 6465, 27021\dots)\]
but does not match any existing entry in \cite{OEIS}.
\end{example}

\section{Combinatorial bialgebras}\label{cb-sect}


A \emph{composition} $\alpha$ of $n \in \NN$, written $\alpha \vDash n$,
is a sequence of positive integers $\alpha=(\alpha_1,\alpha_2,\dots,\alpha_l)$
with $\alpha_1 + \alpha_2 + \dots + \alpha_l = n$. 
The nonzero numbers $\alpha_i$ are the \emph{parts} of the composition.
The unique composition of $n=0$ is the empty word $\emptyset$.
Let $\kk[[x_1,x_2,\dots]]$ be the algebra of formal power series with coefficients in $\kk$ in a countable set of commuting variables.
The \emph{monomial quasi-symmetric function} $M_\alpha$ indexed by a
composition $\alpha\vDash n $ with $l$ parts
is 
\[ M_\alpha = \sum_{i_1<i_2<\dots<i_l} x_{i_1}^{\alpha_1} x_{i_2}^{\alpha_2}\cdots x_{i_l}^{\alpha_l} 
\in \kk[[x_1,x_2,\dots]].\]
When $\alpha$ is the empty composition, set $M_\emptyset=1$.

For each $n \in \NN$, the set $\{ M_\alpha : \alpha \vDash n\}$
is a basis for a subspace  $\QSym_n\subset \kk[[x_1,x_2,\dots]]$.
The vector space of \emph{quasi-symmetric functions}
 $\QSym=\bigoplus_{n \in \NN} \QSym_n$
is a subalgebra of $\kk[[x_1,x_2,\dots]]$.
This algebra is a graded Hopf algebra 
whose coproduct is the linear map with
$\Delta(M_\alpha)  = \sum_{\alpha = \beta \gamma} M_\beta \otimes M_\gamma$
and whose counit is the linear map with
$\epsilon(M_\emptyset) = 1$ and $\epsilon(M_\alpha)=0$ for $\alpha \neq \emptyset$ \cite[\S3]{ABS}.

Each $\alpha \vDash n$
can be rearranged to form a partition  of $n$, denoted $ \sort(\alpha)$.
The \emph{monomial symmetric function} indexed by a partition $\lambda$
is $m_\lambda = \sum_{\sort(\alpha) = \lambda} M_\alpha.$
Write $\lambda \vdash n$ when $\lambda$ is a partition of $n$ and 
let $\Sym_n = \kk\spanning\{ m_\lambda : \lambda \vdash n\}$.
The subspace
$\Sym = \bigoplus_{n \in \NN} \Sym_n \subset \QSym$
is the familiar graded Hopf subalgebra of \emph{symmetric functions}.

Let $\NSym = \kk\langle H_1,H_2,\dots\rangle$ be the graded Hopf algebra
of \emph{noncommutative symmetric functions} described in 
Example~\ref{nsym-ex}, that is,   
the $\kk$-algebra of polynomials in non-commuting indeterminates $H_1,H_2,H_3,\dots$,
where $H_n$ has degree $n$ and the coproduct has $\Delta(H_n) = \sum_{i=0}^n H_i\otimes H_{n-i}$.
Given $\alpha \vDash n$ with $l$ parts, let 
$H_\alpha = H_{\alpha_1}H_{\alpha_2}\cdots H_{\alpha_l}$
and define $H_\emptyset = H_0 =1$.
%
 $\NSym$ is the graded dual of $\QSym$ via
the bilinear form $ \NSym \times \QSym \to \kk$  in which
$\{ H_\alpha \}$ and $\{M_\alpha\}$ are dual bases \cite[\S3]{ABS}.


If $\zeta : V \to \kk[t]$ is a map and $a \in \kk$, then let $\zeta|_{t=a} : V \to \kk$
be the map $v \mapsto \zeta(v)(a)$.


\begin{definition}\label{cco-def}
Suppose $(V,\Delta,\epsilon) \in \Comon(\kGrVec)$ is a graded coalgebra.
If $\zeta : V \to \kk[t]$ is a graded linear map
with  $ \zeta|_{t=0}=\epsilon$,
then  $(V,\Delta,\epsilon,\zeta)$ is a \emph{combinatorial coalgebra}.
\end{definition}
%
%

\begin{definition}\label{bco-def}
Suppose $(V,\nabla,\iota,\Delta,\epsilon) \in \Bimon(\kGrVec)$ is a graded bialgebra.
If $\zeta : V \to \kk[t]$ is a graded algebra morphism with  $ \zeta|_{t=0}=\epsilon$,
then $(V,\nabla,\iota,\Delta,\epsilon,\zeta) $ is a \emph{combinatorial bialgebra}.
\end{definition}

A \emph{combinatorial Hopf algebra} is a combinatorial bialgebra $(V,\nabla,\iota,\Delta,\epsilon,\zeta) $ 
in which $(V,\nabla,\iota,\Delta,\epsilon) $ is a Hopf algebra.
These definitions are minor generalizations of the notions of combinatorial coalgebras and Hopf algebras in \cite{ABS}, 
where it is required that $V$ have finite graded dimension and  $\dim V_0 = 1$. 

When the
structure maps are clear from context, we refer to just the pair $(V,\zeta)$ as a combinatorial coalgebra or bialgebra.
A morphism $\phi: (V,\zeta)\to(V',\zeta')$ 
 of combinatorial coalgebras or bialgebras
is a graded coalgebra or bialgebra morphism 
$\phi : V \to V'$ 
satisfying  $\zeta' = \zeta\circ \phi$.
The map 
 $\zeta$ is the \emph{character} of a combinatorial coalgebra or bialgebra $(V,\zeta)$.

\begin{remark}
 Specifying a graded linear map (respectively, algebra morphism) $V\to\kk[t]$
 is equivalent to defining a (multiplicative) linear map $V \to \kk$.
We define the character $\zeta$ to be a map $V \to \kk[t]$ since this extends more naturally to the linearly compact case.
This convention differs from  \cite{ABS,M1}, where the character of a combinatorial coalgebra is defined to be a linear map $V \to \kk$.
\end{remark}

\begin{example}
There is a graded algebra morphism $\zetaq:\QSym\to\kk[t]$ that has
$\zetaq(M_\emptyset) = 1$, 
$\zetaq(M_{(n)})  = t^n$ for each $n\geq 1$,
and $\zetaq(M_\alpha)  = 0$ for all other compositions $\alpha$.
One way to see that the graded linear map $\zetaq$ is an algebra morphism is to observe that it is the restriction of the algebra morphism 
$\kk[[x_1,x_2,\dots]] \to \kk[[t]]$ that sets $x_1=t$ and $x_n=0$ for all $n>1$.
The pair $(\QSym,\zetaq)$ is a combinatorial Hopf algebra.
\end{example}

Suppose $(V,\Delta,\epsilon,\zeta)$ is a combinatorial coalgebra. 
Define $\Delta^{(1)} = \Delta$ and for $m>2$ set
\[\Delta^{(m-1)} = \(\Delta^{(m-2)} \otimes \id\) \circ \Delta : V \to V^{\otimes m}.\]
Let $\zeta_\emptyset  := \zeta|_{t=0}= \epsilon$.
Given $\alpha \vDash n>0$,
let $\zeta_\alpha : V \to \kk$ be
the map
whose value at $v \in V$ is the coefficient of $t^{\alpha_1} \otimes t^{\alpha_2}\otimes \cdots \otimes t^{\alpha_m}$
in the image of $v$ under  the map
\[V \xrightarrow{\Delta^{(m-1)}} V^{\otimes m} 
\xrightarrow{ \zeta^{\otimes m}} \kk[t]^{\otimes m}.\]
Define $\psi : V \to \QSym$ 
by
\be\label{psi-def} \psi(v)= \sum_{\alpha} \zeta_\alpha(v) M_\alpha\qquad\text{for }v \in V\ee
where the sum is over all compositions.
This \emph{a priori} infinite sum belongs to $\QSym$
since if $v\in V_n$ is homogeneous of degree $n \in \NN$ then $\psi(v) = \sum_{\alpha\vDash n} \zeta_\alpha(v) M_\alpha$.
Thus, $\psi$ is a graded linear map.
The pair $(\QSym,\zetaq)$ is the terminal object in the category of combinatorial (co/bi)algebras:

\begin{theorem}[Aguiar, Bergeron, Sottile \cite{ABS}] \label{abs-thm}
Let $(V,\zeta)$ be a combinatorial coalgebra. 
The map \eqref{psi-def} is the unique 
morphism of combinatorial coalgebras $\psi : (V,\zeta) \to (\QSym,\zetaq)$.
If $(V,\zeta)$ is a combinatorial bialgebra, then 
$\psi$ is a morphism of graded bialgebras.
\end{theorem}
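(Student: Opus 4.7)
The plan is to verify the theorem by direct computation with the explicit formula in \eqref{psi-def}, sidestepping the graded duality argument of \cite{ABS} so that no finiteness or connectedness hypothesis on $V$ is needed. The key observation is that since both $\zeta : V \to \kk[t]$ and the iterated coproduct $\Delta^{(m-1)} : V \to V^{\otimes m}$ are graded, $\zeta_\alpha$ vanishes on $V_n$ unless $|\alpha| = n$, so $\psi(v) = \sum_{\alpha \vDash n} \zeta_\alpha(v) M_\alpha$ is a finite homogeneous sum for each $v \in V_n$, and $\psi$ is well-defined as a graded linear map.

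First I would check that $\psi$ is a morphism of combinatorial coalgebras. The counit identity is immediate from $\epsilon_{\QSym}(M_\alpha) = \delta_{\alpha,\emptyset}$ and $\zeta_\emptyset = \epsilon_V$. The character identity $\zetaq \circ \psi = \zeta$ follows because $\zetaq$ vanishes on every $M_\alpha$ with $\ell(\alpha) > 1$ and sends $M_{(n)}$ to $t^n$, so $\zetaq(\psi(v)) = \sum_{n \geq 0} \zeta_{(n)}(v)\, t^n = \zeta(v)$. For the coproduct identity one expands both sides: the coefficient of $M_\beta \otimes M_\gamma$ in $\Delta_{\QSym}(\psi(v))$ is $\zeta_{\beta\gamma}(v)$, while the coefficient in $(\psi \otimes \psi)(\Delta_V v)$ is $\sum \zeta_\beta(v_{(1)})\zeta_\gamma(v_{(2)})$ in Sweedler notation. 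These agree by coassociativity $\Delta^{(\ell(\beta)+\ell(\gamma)-1)} = (\Delta^{(\ell(\beta)-1)} \otimes \Delta^{(\ell(\gamma)-1)}) \circ \Delta$, with the counit axiom handling the edge cases $\beta = \emptyset$ or $\gamma = \emptyset$.

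For uniqueness, let $\psi' : (V,\zeta) \to (\QSym,\zetaq)$ be any such morphism and fix a composition $\alpha$ with $m = \ell(\alpha) > 0$ parts. Since $\psi'$ is a coalgebra morphism,
\[
\zetaq^{\otimes m} \circ \Delta_{\QSym}^{(m-1)} \circ \psi' = (\zetaq \circ \psi')^{\otimes m} \circ \Delta_V^{(m-1)} = \zeta^{\otimes m} \circ \Delta_V^{(m-1)}.
\]
Evaluated at $v$, the coefficient of $t^{\alpha_1} \otimes \cdots \otimes t^{\alpha_m}$ on the right is $\zeta_\alpha(v)$ by definition; on the left, the analogous coefficient in $\zetaq^{\otimes m}(\Delta^{(m-1)}(M_\beta))$ is $\delta_{\alpha,\beta}$, since only the decomposition of $\beta$ into single-part pieces contributes after applying $\zetaq^{\otimes m}$. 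Hence the coefficient of $M_\alpha$ in $\psi'(v)$ must equal $\zeta_\alpha(v)$, and the case $\alpha = \emptyset$ is forced by $\epsilon_{\QSym} \circ \psi' = \epsilon_V$.

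For the bialgebra case, I would invoke uniqueness rather than compute products directly. Both $\psi \circ \nabla_V$ and $\nabla_{\QSym} \circ (\psi \otimes \psi)$ are graded coalgebra morphisms $V \otimes V \to \QSym$, using the tensor coalgebra structure on $V \otimes V$ and the fact that multiplication in a bialgebra is a coalgebra morphism. Since $\zeta$ is a graded algebra morphism, each of these maps composes with $\zetaq$ to give the common graded linear map $v \otimes v' \mapsto \zeta(v)\zeta(v')$, which equips $V \otimes V$ with the structure of a combinatorial coalgebra. Applying the uniqueness statement just proved to this coalgebra then forces $\psi \circ \nabla_V = \nabla_{\QSym} \circ (\psi \otimes \psi)$, and compatibility with units is immediate from grading. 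The main subtlety will be the bookkeeping in the uniqueness step---specifically, checking which terms in $\Delta^{(m-1)}_{\QSym}(M_\beta)$ survive after $\zetaq^{\otimes m}$---but no genuine obstacle arises from dropping the finite-dimensionality and connectedness hypotheses, as the explicit formula for $\psi$ makes sense unchanged.
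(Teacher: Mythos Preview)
Your proof is correct. It differs from the paper's argument in how existence and uniqueness are established: the paper proceeds by duality, observing that a graded coalgebra morphism $\psi : V \to \QSym$ with $\zetaq \circ \psi = \zeta$ corresponds to a linearly compact algebra morphism $\phi : \whNSym \to V^*$ with $\phi(H_n) = \zeta_n$, which exists uniquely because $\NSym$ is free on the $H_n$; you instead verify the coalgebra axioms for the explicit formula \eqref{psi-def} by hand and extract the $M_\alpha$-coefficient of an arbitrary morphism $\psi'$ via $\zetaq^{\otimes m} \circ \Delta^{(m-1)}_{\QSym}$. For the bialgebra assertion the two proofs coincide, both applying the uniqueness just established to the combinatorial coalgebra $(V\otimes V,\ \nabla_{\kk[t]}\circ(\zeta\otimes\zeta))$. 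The paper's duality route is more conceptual and, importantly, sets up exactly the framework reused verbatim for the linearly compact analogue in Theorem~\ref{cont-abs-thm}; your direct route is more elementary and self-contained, avoiding any mention of $\whNSym$, $V^*$, or linearly compact algebras, at the cost of some coefficient bookkeeping.
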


\begin{proof}
This result is only slightly more general than \cite[Theorem 4.1]{ABS} and has essentially the same proof.
We sketch the argument.
Let $\whNSym$ denote the completion of $\NSym$ with respect to the basis $\{ H_\alpha\}$.
Since $\NSym$ is a graded algebra, $\whNSym$ is a linearly compact algebra.
Write $\langle\cdot,\cdot\rangle$ for both the tautological form $V\times V^* \to \kk$
and the bilinear form $\QSym \times \whNSym \to \kk$, continuous in the second coordinate,
relative to which the pseudobasis $\{H_\alpha\}\subset \whNSym$ is dual to the basis $\{M_\alpha\} \subset \QSym$.
Both forms are nondegenerate. 
We view the dual space $V^*$ as the linearly compact algebra with unit element $\epsilon$ dual to the coalgebra $V$ via the tautological form.
The linearly compact algebra structure on $\whNSym$ is the one dual to the  
 coalgebra structure on $\QSym$.

Let $[t^n] f$ denote the coefficient of $t^n$ in $f \in \kk[t]$ and define $\zeta_n \in V^*$ by $\zeta_n(v)= [t^n] \zeta(v)$,
so that $\zeta_0 = \epsilon$. Observe that $[t^n] \zetaq(q) = \langle q, H_n\rangle$ for all $n \in \NN$ and $q \in \QSym$.
It follows that there exists a unique coalgebra morphism $\psi : V \to \QSym$ with $\zeta = \zetaq\circ \psi$
if and only if there exists a unique linearly compact algebra morphism $\phi : \whNSym \to V^*$ with $\phi(H_n) = \zeta_n$ for all $n \in \NN$,
and when this occurs, the two maps satisfy
$\langle \psi(v), w\rangle= \langle v, \phi(w)\rangle$ for all $v \in V$ and $w \in \whNSym$.

Write $V^*_{\textsf{gr}}$ for the graded algebra that is the graded dual of the graded coalgebra $V$.
Since the unit of $V^*_{\textsf{gr}}$ is $\epsilon=\zeta_0$, 
there is a unique algebra morphism $\NSym \to V^*_{\textsf{gr}}$ that sends $H_n \mapsto \zeta_n$ for each $n \in \NN$.
As $V_m \subset \ker \zeta_n$ for all $m\neq n$, this morphism is graded, 
so it extends to a unique linearly compact algebra morphism $\phi : \whNSym \to V^*$.
The resulting morphism $\phi : \whNSym \to V^*$ is evidently the unique one satisfying $\phi(H_n) = \zeta_n$ for all $n \in \NN$,
and one has $\phi(H_\alpha) = \zeta_\alpha$ with $\zeta_\alpha$ as in \eqref{psi-def}.
Hence, there exists a unique coalgebra morphism $\psi :V \to \QSym$ satisfying $\zeta = \zetaq\circ \psi$,
and for this map one has $\langle \psi(v), H_\alpha\rangle = \langle v,\phi(H_\alpha)\rangle= \langle v,\zeta_\alpha\rangle = \zeta_\alpha(v)$ for all $v \in V$ and compositions $\alpha$;
in other words, $\psi$ is the graded linear map \eqref{psi-def}.\footnote[1]{Alternatively, one can consider the graded dual of 
the algebra morphism $\NSym \to V^*_{\textsf{gr}}$ sending $H_n \mapsto \zeta_n$
to obtain a map $(V^*_{\textsf{gr}})^*_{\textsf{gr}} \to \QSym$.
The composition $V\to (V^*_{\textsf{gr}})^*_{\textsf{gr}} \to \QSym$ is then a coalgebra morphism
by \cite[Exercise 1.6.1(f)]{GrinbergReiner}, and one can check that it has the same properties as $\psi$.}

Assume $(V,\zeta)$ is a combinatorial bialgebra. 
Use the symbol $\nabla$ to also denote the products of $\kk[t]$ and $\QSym$.
Define $\xi = \nabla \circ (\zeta \otimes \zeta)$.
Then $(V\otimes V, \xi)$ is a combinatorial coalgebra
and it is easy to check that 
 $\nabla\circ (\psi \otimes \psi)$ and $\psi \circ \nabla$ are both morphisms $(V\otimes V ,\xi) \to (\QSym,\zetaq)$.
The uniqueness proved in the previous paragraph implies that $\nabla\circ (\psi \otimes \psi)= \psi \circ \nabla$.
Since $\zeta$ is an algebra morphism, we also have $\psi(1) = 1 \in \QSym$, so $\psi$ is a bialgebra morphism.
\end{proof}

The results discussed so far have linearly compact analogues.
Let
$\kk[[t]]  $ denote the algebra
of formal power series in $t$, viewed as a linearly compact space as in Example~\ref{kk[x]-ex}.
If $V \in \whkVec$ has pseudobasis $\{ v_i : i \in I\}$, 
then a linear map $\phi : V \to \kk[[t]]$ is continuous if and only if 
for each $n \in \NN$, the set of indices $i \in I$ with $[t^n] \phi(v_i) \neq 0$ is finite,
and 
$\phi\(\sum_{i \in I} c_i v_i \) = \sum_{i \in I} c_i \phi(v_i)$ for any $c_i \in \kk$.

%

\begin{definition}
Suppose $(V,\Delta,\epsilon) \in \Comon(\whkVec)$.
 If $\zeta : V \to \kk[[t]]$ is a continuous linear map with $ \zeta|_{t=0}=\epsilon$,
then  $(V,\Delta,\epsilon,\zeta)$ is a \emph{linearly compact combinatorial coalgebra}.
\end{definition}

Unlike Definition~\ref{cco-def}, this definition does not require any grading on the vector space $V$.

\begin{definition}
Suppose $(V,\nabla,\iota,\Delta,\epsilon) \in \Bimon(\whkVec)$.
If $\zeta : V \to \kk[[t]]$ is a morphism of linearly compact algebras with $\zeta|_{t=0}=\epsilon$,
then $(V,\nabla,\iota,\Delta,\epsilon,\zeta) $ is a \emph{linearly compact combinatorial bialgebra}.
\end{definition}

We often refer to just the pair $(V,\zeta)$ as a linearly compact combinatorial coalgebra or bialgebra.
The map $\zeta$ is the \emph{character} of $(V,\zeta)$.
A morphism $\phi: (V,\zeta)\to(V',\zeta')$ 
 of linearly compact combinatorial (co/bi)algebras
is a continuous (co/bi)algebra morphism 
satisfying  $\zeta' = \zeta\circ \phi$.

%
\begin{example}
Define
$
\whQSym = \prod_{n \in \NN} \QSym_n \in \whkVec
$
and
$
\whSym= \prod_{n \in \NN} \Sym_n \in \whkVec
$
to be the completions of $\QSym$
and $\Sym$ with respect to the bases $\{M_\alpha\}$ and $\{m_\lambda\}$.
 The (co)product and (co)unit maps of $\QSym$ extend to make $\whQSym$ into a linearly compact bialgebra
 and $\whSym \subset \whQSym$ into a linearly compact sub-bialgebra.
 The map $\zetaq$ 
extends to a linearly compact algebra morphism
$\whQSym \to \kk[[t]]$ and
$(\whQSym, \zetaq)$ is a linearly compact combinatorial bialgebra.
\end{example}

Suppose $(V,\Delta,\epsilon,\zeta)$ is a linearly compact combinatorial coalgebra. 
Define $\Delta^{(1)} = \Delta$ and  set 
\[\Delta^{(m-1)} = \(\Delta^{(m-2)} \htimes \id\) \circ \Delta : V \to V^{\htimes m}\]
for $m>2$.
Let $\zeta_\emptyset := \zeta|_{t=0}  = \epsilon$. 
Given $\alpha \vDash n>0$,
let $\zeta_\alpha : V \to \kk$ be
the map
whose value at $v \in V$ is the coefficient of $t^{\alpha_1} \otimes t^{\alpha_2}\otimes \cdots \otimes t^{\alpha_m}$
in the image of $v$ under \[V \xrightarrow{\Delta^{(m-1)}} V^{\htimes m} 
\xrightarrow{ \zeta^{\htimes m}} \kk[[t]]^{\htimes m}.\]
Define $\psi :V \to \whQSym$ to be the map
\be\label{hat-psi-def}
\psi(v) = \sum_\alpha \zeta_\alpha(v) M_\alpha\qquad\text{for $v \in V$}\ee
where the sum is over all compositions $\alpha$.
This is the same formula as \eqref{psi-def}, except now the sum
may have infinitely many nonzero terms.

\begin{theorem}\label{cont-abs-thm}
Let $(V,\zeta)$ be a linearly compact combinatorial coalgebra. 
The map \eqref{hat-psi-def} is the unique 
morphism of linearly compact combinatorial coalgebras $\psi : (V,\zeta) \to (\whQSym,\zetaq)$.
If $(V,\zeta)$ is a linearly compact combinatorial bialgebra, then 
$\psi$ is a morphism of linearly compact bialgebras.
\end{theorem}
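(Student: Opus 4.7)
The plan is to adapt the proof of Theorem~\ref{abs-thm} via the contravariant equivalence $\vee : \whkVec \to \kVec$ from Section~\ref{cont-sect}, working directly with $\NSym$ in place of $\whNSym$. I would form the nondegenerate pairings $\langle\cdot,\cdot\rangle : V \times V^\vee \to \kk$ (tautological) and $\NSym \times \whQSym \to \kk$ (continuous in the second coordinate), under which $\{H_\alpha\}$ and $\{M_\alpha\}$ are dual. Since $\QSym$ has finite graded dimension, Example~\ref{ext-ex} identifies $\whQSym^\vee$ with $\NSym$ as $\kk$-algebras, where the algebra structure on $\NSym$ is convolution against the coproduct of $\whQSym$; similarly, $V^\vee$ is the convolution $\kk$-algebra with unit $\epsilon$ dual to the linearly compact coalgebra $V$. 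Under the duality of categories, continuous coalgebra morphisms $\psi : V \to \whQSym$ then correspond bijectively to $\kk$-algebra morphisms $\phi := \psi^\vee : \NSym \to V^\vee$.

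Next, I would dualize the characters: $\zeta^\vee : \kk[t] \to V^\vee$ sends $t^n \mapsto \zeta_n$, where $\zeta_n(v) := [t^n]\zeta(v)$ is continuous as a composition of continuous maps and hence lies in $V^\vee$, while $\zetaq^\vee : \kk[t] \to \NSym$ sends $t^n \mapsto H_n$. The condition $\zeta|_{t=0}=\epsilon$ reads $\zeta_0 = \epsilon$ (the unit of $V^\vee$), and $\zetaq\circ\psi = \zeta$ translates to $\phi(H_n) = \zeta_n$ for all $n\geq 0$. Since $\NSym = \kk\langle H_1,H_2,\dots\rangle$ is a free $\kk$-algebra, there is a unique algebra morphism $\phi : \NSym \to V^\vee$ with $\phi(H_n) = \zeta_n$ for $n\geq 1$, and it automatically sends $H_0 = 1$ to $\epsilon = \zeta_0$. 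This produces the unique continuous coalgebra morphism $\psi : (V,\zeta) \to (\whQSym,\zetaq)$. Expanding $\phi(H_\alpha) = \phi(H_{\alpha_1}) * \cdots * \phi(H_{\alpha_l})$ via convolution in $V^\vee$ identifies it with the functional $\zeta_\alpha$ from the statement, so that $\psi(v) = \sum_\alpha \langle v, \phi(H_\alpha)\rangle M_\alpha = \sum_\alpha \zeta_\alpha(v) M_\alpha$, which belongs to $\whQSym = \prod_n \QSym_n$ automatically.

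For the bialgebra case, the argument from the proof of Theorem~\ref{abs-thm} transfers directly: setting $\xi = \nabla_{\kk[[t]]} \circ (\zeta \htimes \zeta) : V\htimes V \to \kk[[t]]$ makes $(V\htimes V, \xi)$ a linearly compact combinatorial coalgebra, and both $\nabla \circ (\psi \htimes \psi)$ and $\psi\circ\nabla$ are morphisms $(V\htimes V,\xi) \to (\whQSym,\zetaq)$ which must coincide by the uniqueness just established, forcing $\psi$ to be multiplicative. Preservation of units follows from $\zetaq\circ\psi\circ\iota = \zeta\circ\iota$ together with an analogous uniqueness for morphisms out of the trivial combinatorial coalgebra $\kk$. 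The main technical task is the correct transcription of the duality in Section~\ref{cont-sect}: once one checks that each $\zeta_\alpha$ is continuous, that $\NSym$ really is the continuous dual of $\whQSym$, and that $\kk[t]$ is the continuous dual of $\kk[[t]]$, the argument runs in parallel with the graded case, with the welcome simplification that no completion of $\NSym$ is required.
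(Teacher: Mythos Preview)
Your proposal is correct and follows essentially the same approach as the paper's own proof: both dualize to an algebra morphism $\phi : \NSym \to V^\vee$ determined by $\phi(H_n) = \zeta_n$, identify $\phi(H_\alpha)$ with the convolution functional $\zeta_\alpha$, and handle the bialgebra case via the auxiliary combinatorial coalgebra $(V\htimes V,\xi)$ together with uniqueness. You also correctly observe the key simplification relative to Theorem~\ref{abs-thm}, namely that $\NSym$ itself (rather than its completion) already serves as the continuous dual of $\whQSym$.
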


\begin{proof}
The proof is similar to that of Theorem~\ref{abs-thm}.
Write $\langle\cdot,\cdot\rangle$
 for both the tautological form $V^\vee\times V \to \kk$
and the bilinear form $\NSym \times \whQSym \to \kk$, continuous in the second coordinate,
relative to which the pseudobasis $\{M_\alpha\}\subset \whQSym$ is dual to the basis $\{H_\alpha\} \subset \NSym$.
Both forms are nondegenerate.
We view 
the vector space $V^\vee$ of continuous linear maps $V \to \kk$  as the algebra with unit element $\epsilon$ dual to the linearly compact coalgebra $V$ via the tautological form.
The algebra structure on $\NSym$ is the one dual to the linearly compact coalgebra structure on $\whQSym$.

Define $\zeta_n \in V^\vee$ by $\zeta_n(v)= [t^n] \zeta(v)$,
so that $\zeta_0 = \epsilon$. 
Let $\phi$ be the unique algebra morphism $ \NSym \to V^\vee$ with $\phi(H_n) = \zeta_n$ for all $n \in \NN$.
Since the product in $V^\vee$ of a sequence of continuous linear maps  $f_1,f_2,\dots,f_m:V \to \kk$ is the map 
\[ \nabla_\kk^{(m-1)} (f_1 \htimes f_2 \htimes \dots \htimes f_m) \circ \Delta^{(m-1)} : V \to \kk,\] 
 it follows that if $\alpha=(\alpha_1,\alpha_2,\dots,\alpha_l)$ is a composition then
 $\phi(H_\alpha) = \phi(H_{\alpha_1} H_{\alpha_2} \cdots H_{\alpha_l})=\zeta_\alpha$ with $\zeta_\alpha$ as in \eqref{hat-psi-def}.
The unique map $ \psi : V \to \whQSym$
satisfying
$\langle u, \psi(v)\rangle= \langle \phi(u), v\rangle$ for all $u \in \NSym$ and $v \in V$
is therefore a linearly compact coalgebra morphism
with the formula \eqref{hat-psi-def}.
Since $\zeta = \zetaq\circ \psi$ if and only if $\langle \zeta_n, v \rangle  = \langle H_n, \psi(v)\rangle $ for all $n \in \NN$ and $v \in V$,
it follows that $\psi$ is the unique morphism of linearly compact combinatorial coalgebras $(V,\zeta) \to (\whQSym,\zetaq)$.

Assume $(V,\zeta)$ is a linearly compact combinatorial bialgebra. 
Use the symbol $\nabla$ for the products of $\kk[[t]]$ and $\whQSym$.
Define $\xi = \nabla \circ (\zeta \htimes \zeta)$.
Then
 $(V\htimes V, \xi)$ is a linearly compact combinatorial coalgebra
and the maps
 $\nabla\circ (\psi \htimes \psi)$ and $\psi \circ \nabla$ are morphisms $(V\htimes V ,\xi) \to (\whQSym,\zetaq)$, so  they must be equal.
By definition $\psi(1) = 1 \in \whQSym$, so $\psi$ is a linearly compact bialgebra morphism.
\end{proof}

The preceding result removes the requirement of a grading in Theorem~\ref{abs-thm}, at the cost of working with linearly 
compact spaces. If one needs to work with honest (bi, co)algebras, then
it is still possible to remove the requirement of a grading in  Theorem~\ref{abs-thm},
but then one must impose a technical finiteness condition
to ensure that the sum \eqref{psi-def} belongs to $\QSym$.


Theorems~\ref{abs-thm} and \ref{cont-abs-thm} also have a version for species.
Recall the definition of $\bfE$ from \eqref{nn-eq}.

\begin{definition}
Suppose $(\cV,\nabla,\iota,\Delta,\epsilon) \in \ExtBim$ and $\cZ : \cV \to \bfE(\kk[[t]])$ is a natural transformation
of functors  $\FB\to\whkVec$.
Assume, for all disjoint finite sets $S$ and $T$, that  the following conditions hold:
\ben
\item[(a)]  The tuple $(\cV[S], \Delta_S,\epsilon_S,\cZ_S)$  is a linearly compact combinatorial coalgebra.
\item[(b)] One has $\cZ_\varnothing \circ \iota_\varnothing(1) = 1 \in \kk[[t]] $.
\item[(c)] If $u \in \cV[S]$ and $v \in \cV[T]$ then  $\cZ_{S\sqcup T}\circ \nabla_{ST} (u \otimes v) = \cZ_S(u)\cZ_T(v)$.
\een
Then $(\cV,\nabla,\iota,\Delta,\epsilon,\cZ)$ is a \emph{combinatorial coalgebroid}.
\end{definition}

This definition is similar to the notion of a \emph{combinatorial Hopf monoid} given in \cite[\S5.4]{M0}.
As usual, when the other data is clear from context,
we refer to just $(\cV,\cZ)$ as a combinatorial coalgebroid.
The natural transformation $\cZ : \cV \to \bfE(\kk[[t]])$ is the \emph{character} of $(\cV,\cZ)$.
A morphism  of combinatorial coalgebroids  $ (\cV,\cZ) \to (\cV',\cZ')$
is a morphism of species coalgebroids $\phi : \cV \to \cV'$  such that $\cZ = \cZ'\circ \phi$.

If $(V,\zeta)$ is a linearly compact combinatorial bialgebra, then $(\bfE(V), \bfE(\zeta))$ is a combinatorial coalgebroid.
Let  $\EQSym = \bfE(\whQSym)$ and $\Zetaq = \bfE(\zetaq)$.
Suppose $(\cV,\cZ)$ is a combinatorial coalgebroid. 
Define $\Psi : \cV \to \EQSym$ to be the natural transformation
such that, for each set $S$, the map
$\Psi_S$ is the unique morphism $(\cV[S],\cZ_S) \to (\whQSym,\zetaq)$.
This is well-defined since if $\sigma : S \to T$ is a bijection then 
$
\zetaq \circ \Psi_T \circ \cV[\sigma]  = \cZ_T \circ \cV[\sigma] = \cZ_S
$, so the maps
$\Psi_T \circ \cV[\sigma]$ and $ \EQSym[\sigma]\circ \Psi_S $
must be equal
as both are morphisms
$(\cV[S],\cZ_S) \to (\whQSym,\zetaq)$.

\begin{corollary}\label{extended-abs-cor}
Let $(\cV,\cZ)$ be a combinatorial coalgebroid. 
Then $\Psi$ is the unique 
morphism of combinatorial coalgebroids $(\cV,\cZ) \to (\EQSym,\Zetaq)$.
\end{corollary}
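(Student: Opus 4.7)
The plan is to unpack what it means for $\Psi$ to be a morphism of combinatorial coalgebroids, verify each piece using the componentwise construction of $\Psi$, and at every stage appeal to the uniqueness assertion in Theorem~\ref{cont-abs-thm}. Character compatibility $\Zetaq \circ \Psi = \cZ$ is automatic from the definition: each $\Psi_S$ is defined precisely so that $\zetaq \circ \Psi_S = \cZ_S$. What remains is to check that $\Psi$ is a morphism of species coalgebroids (i.e.\ respects the product $\nabla$ and unit $\iota$), and then to establish uniqueness.

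For compatibility with the product, fix disjoint finite sets $S,T$ and set $\xi = \nabla \circ (\cZ_S \htimes \cZ_T) : \cV[S] \htimes \cV[T] \to \kk[[t]]$, where $\nabla$ denotes the product of $\kk[[t]]$. As in the last paragraph of the proof of Theorem~\ref{cont-abs-thm}, the tensor product $\cV[S]\htimes \cV[T]$ with its natural coproduct, counit, and character $\xi$ is a linearly compact combinatorial coalgebra. I would then exhibit both $\Psi_{S\sqcup T} \circ \nabla_{ST}$ and $\nabla_{\whQSym} \circ (\Psi_S \htimes \Psi_T)$ as morphisms from this object to $(\whQSym,\zetaq)$: the former satisfies $\zetaq \circ \Psi_{S\sqcup T} \circ \nabla_{ST} = \cZ_{S\sqcup T} \circ \nabla_{ST} = \xi$ by axiom (c) in the definition of a combinatorial coalgebroid, while the latter satisfies $\zetaq \circ \nabla_{\whQSym} \circ (\Psi_S \htimes \Psi_T) = \nabla \circ (\cZ_S \htimes \cZ_T) = \xi$ since $\zetaq$ is an algebra morphism. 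The uniqueness assertion in Theorem~\ref{cont-abs-thm} then forces these two composites to be equal. The unit condition reduces to showing $\Psi_\varnothing(\iota_\varnothing(1)) = 1 \in \whQSym$: using the explicit formula \eqref{hat-psi-def} together with the fact that $\iota_\varnothing$ is a coalgebra morphism (so $\Delta_\varnothing^{(m-1)}(\iota_\varnothing(1)) = \iota_\varnothing(1)^{\otimes m}$ for all $m$) and $\cZ_\varnothing(\iota_\varnothing(1)) = 1$ from axiom (b), one computes that $(\cZ_\varnothing)_\alpha(\iota_\varnothing(1)) = \delta_{\alpha,\emptyset}$, whence $\Psi_\varnothing(\iota_\varnothing(1)) = M_\emptyset = 1$.

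Uniqueness of $\Psi$ is then immediate. If $\Phi : \cV \to \EQSym$ is any morphism of combinatorial coalgebroids, then for each finite set $S$ the component $\Phi_S : \cV[S] \to \whQSym$ is a continuous coalgebra morphism satisfying $\zetaq \circ \Phi_S = \cZ_S$, hence a morphism $(\cV[S], \cZ_S) \to (\whQSym, \zetaq)$ of linearly compact combinatorial coalgebras; by Theorem~\ref{cont-abs-thm}, $\Phi_S = \Psi_S$, so $\Phi = \Psi$. The main obstacle is the product compatibility, and overcoming it is really just a matter of identifying the right auxiliary combinatorial coalgebra structure on $\cV[S]\htimes \cV[T]$ so that both candidate composites become morphisms with the same character into $(\whQSym,\zetaq)$; once that is done, Theorem~\ref{cont-abs-thm} finishes everything.
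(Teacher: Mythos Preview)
Your proposal is correct and follows essentially the same approach as the paper's proof: both arguments reduce product compatibility to the uniqueness clause of Theorem~\ref{cont-abs-thm} by equipping $\cV[S]\htimes\cV[T]$ with the character $\xi = \nabla_{\kk[[t]]}\circ(\cZ_S\htimes\cZ_T)$, verify the unit condition via the explicit formula \eqref{hat-psi-def} using $\cZ_\varnothing\circ\iota_\varnothing(1)=1$ and the grouplike property of $\iota_\varnothing(1)$, and obtain uniqueness componentwise. The only minor point you leave implicit (as does the paper) is that $\nabla_{ST}$ and $\nabla_{\whQSym}$ are coalgebra morphisms, which is needed for the two composites to be morphisms of linearly compact combinatorial coalgebras; this follows from axiom (c) of Definition~\ref{extended-def} and the bialgebra structure of $\whQSym$, respectively.
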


\begin{proof}
By Theorem~\ref{cont-abs-thm},
$\Psi$ is the unique morphism $\cV \to \EQSym$ in the category $\Comon(\whkVec)$-$\Sp$ satisfying $\cZ  = \Zetaq \circ \Psi$.
It remains to show that $\Psi$ is a morphism of species coalgebroids.
For this, it suffices to check that $\Psi_\varnothing \circ \iota_\varnothing(1) = 1 \in \whQSym$ and $\Psi_{S\sqcup T} \circ \nabla_{ST} = \nabla_{ST} \circ (\Psi_S \htimes \Psi_T)$
for all disjoint finite sets $S$ and $T$.
The first property is evident from \eqref{hat-psi-def} since $\cZ_\varnothing \circ \iota_\varnothing(1) = 1 \in \kk[[t]]$ and $\Delta_\varnothing\circ \iota_\varnothing(1) = \iota_\varnothing(1) \otimes \iota_\varnothing(1)$.
The second property follows from Theorem~\ref{cont-abs-thm} since if $V=\cV[S]\htimes \cV[T]$ and $\xi = \nabla_{\kk[[t]]}\circ (\cZ_S \htimes \cZ_T)$
then
$(V, \xi)$ is a linearly compact combinatorial coalgebra,
and both 
$\Psi_{S\sqcup T} \circ \nabla_{ST}$ and $\nabla_{ST} \circ (\Psi_S \htimes \Psi_T)$
are morphisms 
$(V,\xi) \to (\whQSym,\zetaq)$.
\end{proof}

%
%

Suppose  $(V,\nabla,\iota,\Delta,\epsilon)$ is a graded $\kk$-bialgebra.
Let
$\XX(V)$ denote the set of graded linear maps $\zeta : V \to \kk[t]$ for which $(V,\zeta)$ is a combinatorial bialgebra.
This set is a monoid with unit element $\epsilon$ and product $\zeta\zeta' := \nabla_{\kk[t]} \circ (\zeta \otimes \zeta') \circ \Delta$
where $\nabla_{\kk[t]}$ is the product  of  $\kk[t]$.
We refer to $\XX(V)$ as the \emph{character monoid} of $V$.
If $V$ is a Hopf algebra with antipode $\antipode$, then 
$\zeta^{-1} := \zeta\circ \antipode$
is the left and right inverse of
$\zeta \in \XX(V)$, and $\XX(V)$ is a group with some notable properties \cite{ABS}.


If $(V,\nabla,\iota,\Delta,\epsilon)$ is a linearly compact $\kk$-bialgebra 
then we let
$\XX(V)$ denote the set of continuous linear maps $\zeta : V \to \kk[[t]]$ for which $(V,\zeta)$ is a linearly compact combinatorial bialgebra.
This set is again a monoid with unit element $\epsilon$ and product $\zeta\zeta' := \nabla_{\kk[[t]]} \circ (\zeta \otimes \zeta') \circ \Delta$.
In turn,
if $(\cV,\nabla,\iota,\Delta,\epsilon) \in \ExtBim$ is a species coalgebroid
then we define $\XX(\cV)$ to be the set of natural transformations $\cZ : \cV \to \bfE(\kk[[t]])$ for which $(\cV,\cZ)$ is a combinatorial coalgebroid.
This set is yet another monoid with unit element $\epsilon$, in which the product of $\cZ,\cZ' \in \XX(\cV)$
is the morphism $\cZ\cZ' : \cV \to \bfE(\kk[[t]])$ with $(\cZ\cZ')_S :=\cZ_S  \cZ'_S$ for each finite set $S$.

\section{Characters and morphisms}\label{comwor-sect}

In this section, we assume $\kk$ has characteristic zero
and view $\bfW$ as the bialgebra from Theorem~\ref{w-thm}.
Our goal here is to illustrate a variety of cases where
well-known
symmetric and quasi-symmetric functions
may be constructed 
via the morphisms in Theorems~\ref{abs-thm} and \ref{cont-abs-thm} and Corollary~\ref{extended-abs-cor}.

\subsection{Fundamental quasi-symmetric functions}

We start by examining four natural elements of $\XX(\bfW)$.
Let 
$\zeta_{\leq}: \bfW \to \kk[t]$
be the linear map  
\be
\label{same-eq}
\zeta_{\leq}([w,n]) =  \begin{cases} t^{\ell(w)}&\text{if $w$ is weakly increasing}
\\
0&\text{otherwise}
\end{cases}
\qquad\text{for $[w,n] \in \Words$.}
\ee
Define 
$ \zeta_{\geq}$, $\zeta_{<}$, $\zeta_>$ to be the linear maps $\bfW \to \kk[t]$
given by the same formula but with ``weakly increasing''
replaced by ``weakly decreasing,'' ``strictly increasing,'' and ``strictly decreasing.''

\begin{proposition}
For each $\bullet \in \{ {\leq}, {\geq}, {<}, {>}\}$,
we have $\zeta_\bullet \in \XX(\bfW)$.
\end{proposition}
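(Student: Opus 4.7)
The plan is to verify the three conditions required for $\zeta_\bullet \in \XX(\bfW)$: it must be a graded linear map, an algebra morphism (compatible with $\nabla_\shuffle$ and $\iota_\shuffle$), and satisfy $\zeta_\bullet|_{t=0} = \epsilon_\odot$. The grading, the condition at $t=0$, and compatibility with the unit are immediate from the definition: $[w,n]$ is homogeneous of degree $\ell(w)$ in $\bfW$ and $\zeta_\bullet([w,n])$ is $t^{\ell(w)}$ or $0$, which has the correct degree in $\kk[t]$; setting $t=0$ gives $1$ precisely when $w=\emptyset$, matching $\epsilon_\odot$; and $\zeta_\bullet(\iota_\shuffle(1)) = \zeta_\bullet([\emptyset,0])=1$.

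The only real content is multiplicativity. I would compute $\zeta_\bullet(\nabla_\shuffle([v,m]\otimes[w,n]))$ case-by-case using two complementary observations about the expansion $v\shuffle(w\uparrow m)$: (1) every shuffle term contains $v$ and $w\uparrow m$ as (not necessarily contiguous) subwords, preserving their relative orders; and (2) the letter alphabets of $v$ and $w\uparrow m$ are disjoint, with $\max(v)\leq m<\min(w\uparrow m)$. If $v$ fails $\bullet$-monotonicity, say because of adjacent letters $v_i, v_{i+1}$ witnessing a forbidden comparison, then in any shuffle the letter immediately preceding $v_{i+1}$ is either $v_i$ itself or an element of $w\uparrow m$; in the latter case it is strictly greater than $v_{i+1}$, producing the same forbidden relation. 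Hence no shuffle is $\bullet$-monotonic, so both sides of the multiplicativity equation vanish. The symmetric argument handles failure of $w$.

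If both $v$ and $w$ are $\bullet$-monotonic, then observation (2) restricts the $\bullet$-monotonic shuffles to exactly one: for $\bullet \in \{\leq,<\}$ the letters of $v$ must all precede those of $w\uparrow m$ (otherwise we would descend from a letter $>m$ to a letter $\leq m$), giving the single shuffle $v(w\uparrow m)$; for $\bullet \in \{\geq,>\}$ the unique $\bullet$-monotonic shuffle is symmetrically $(w\uparrow m)v$. In either case, the resulting word has length $\ell(v)+\ell(w)$, and no multiplicities arise since $v$ and $w\uparrow m$ are in disjoint alphabets. Thus
\[
\zeta_\bullet(\nabla_\shuffle([v,m]\otimes[w,n])) \;=\; t^{\ell(v)+\ell(w)} \;=\; \zeta_\bullet([v,m])\,\zeta_\bullet([w,n]).
\]

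The only real subtlety is checking carefully that no shuffle can accidentally ``repair'' non-$\bullet$-monotonicity of $v$ (or $w$) by interleaving letters from $w\uparrow m$; this is the elementary observation above that all inserted letters strictly dominate $\max(v)$. Everything else is bookkeeping, and the four cases for $\bullet \in \{\leq,\geq,<,>\}$ are handled uniformly by the same argument with inequalities adjusted in the obvious way.
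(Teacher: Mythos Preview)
Your proof is correct and is exactly the direct verification the paper alludes to: the paper's own proof simply cites \cite[Proposition 5.4]{M1} and remarks that the claim is ``easily checked directly,'' without spelling out any details. One small point of precision: your argument for the vanishing case (where $v$ fails $\bullet$-monotonicity) as written, looking at the predecessor of $v_{i+1}$, literally applies only to $\bullet\in\{\leq,<\}$; for $\bullet\in\{\geq,>\}$ one should instead look at the \emph{successor} of $v_i$ (which is either $v_{i+1}$ or a letter of $w\uparrow m$, in both cases strictly larger than $v_i$), and dually swap roles for failure of $w$. You acknowledge this under ``inequalities adjusted in the obvious way,'' so this is not a gap, just something worth stating a bit more carefully.
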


\begin{proof}
This is equivalent to \cite[Proposition 5.4]{M1} and easily checked directly.
\end{proof}

For each  $\bullet \in \{ {\leq}, {\geq}, {<}, {>}\}$, we let $\psi_\bullet $ 
denote the unique morphism
$
 (\bfW,\zeta_\bullet) \to (\QSym,\zetaq)
$.
Given  $\alpha = (\alpha_1,\alpha_2,\dots,\alpha_l) \vDash n$, let
$I(\alpha) = \{\alpha_1, \alpha_1 + \alpha_2, \dots ,\alpha_1 +\alpha_2+ \dots + \alpha_{l-1}\}.$
The map $\alpha \mapsto I(\alpha)$
is a bijection from compositions of $n$ to subsets of $[n-1]$.
Write $\alpha \leq \beta$ if $\alpha,\beta\vDash n$ and $I(\alpha) \subseteq I(\beta)$.
The \emph{fundamental quasi-symmetric function} associated to $\alpha \vDash n$ is 
\[
L_\alpha = \sum_{\alpha\leq \beta} M_\beta 
=
 \sum_{\substack{i_1 \leq i_2\leq \dots\leq i_n \\ i_j < i_{j+1}\text{ if }j \in I(\alpha)}} x_{i_1}x_{i_2}\cdots x_{i_n}\in \QSym_n.
\]
The set $\{L_\alpha : \alpha \vDash n\}$ is
a second basis of $\QSym_n$.
Given $\alpha = (\alpha_1,\alpha_2,\dots,\alpha_l) \vDash n$,
let  $\beta \vDash n$ be such that $I(\beta) = [n-1] \setminus I(\alpha)$
and
define the \emph{reversal}, \emph{complement}, and \emph{transpose} of $\alpha$ to be
\[\alpha^\r = (\alpha_l,\dots,\alpha_2,\alpha_1),
\qquad
\alpha^\c =\beta,
\qquand \alpha^\t = (\alpha^\r)^\c = (\alpha^\c)^\r.
\] 
%
For a word $w=w_1w_2\cdots w_n$, define 
$w^\r = w_n\cdots w_2w_1$
and
$\Des(w) = \{ i \in [n-1] : w_i > w_{i+1}\}.$

\begin{proposition}[{\cite[Proposition 5.5]{M1}}]
\label{words-opsi-prop}
If  $[w,n] \in \Words$, $\alpha \vDash \ell(w)$, and $\Des(w) = I(\alpha)$, then
\[
\psi_{\leq}([w,n]) = L_\alpha,
\quad 
\psi_{>}([w,n]) = L_{\alpha^\c},
\quad
\psi_{\geq}([w^\r,n]) = L_{\alpha^\r},
\quand
\psi_{<}([w^\r,n]) = L_{\alpha^\t}.
\]
\end{proposition}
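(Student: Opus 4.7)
The plan is to unpack the definition of each morphism $\psi_\bullet$ directly from \eqref{psi-def} and compute the coefficients $\zeta_\alpha([w,n])$ for each of the four characters.

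First I would observe that for $[w,n] \in \Words$ with $w = w_1 w_2 \cdots w_m$, iterating the coproduct $\Delta_\odot$ gives
\[
\Delta_\odot^{(l-1)}([w,n]) = \sum_{0 \leq i_1 \leq i_2 \leq \dots \leq i_{l-1} \leq m} [w_{[1,i_1]},n] \otimes [w_{[i_1+1,i_2]},n] \otimes \cdots \otimes [w_{[i_{l-1}+1,m]},n],
\]
where $w_{[a,b]}$ denotes the (possibly empty) consecutive subword $w_a w_{a+1}\cdots w_b$. Applying $\zeta_{\leq}^{\otimes l}$ to this and extracting the coefficient of $t^{\alpha_1}\otimes \cdots \otimes t^{\alpha_l}$ for a composition $\alpha$ of $m=\ell(w)$ forces $i_j = \alpha_1+\cdots+\alpha_j$, so there is a unique surviving term: the factorization of $w$ into consecutive pieces of lengths $\alpha_1,\dots,\alpha_l$. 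Its contribution is $1$ precisely when each piece is weakly increasing, which happens iff no descent of $w$ lies in the interior of any piece, i.e., iff $\Des(w) \subseteq I(\alpha)$.

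Now, if $\Des(w)=I(\alpha)$ then
\[
\psi_{\leq}([w,n]) = \sum_{\gamma \vDash m} [\Des(w) \subseteq I(\gamma)] \cdot M_\gamma = \sum_{\alpha \leq \gamma} M_\gamma = L_\alpha,
\]
by the definition of $L_\alpha$. The other three formulas follow by the same template, with two small bookkeeping steps. For $\zeta_{>}$, strictly decreasing factors require that every \emph{weak ascent} of $w$, i.e., every element of $\Asc(w) = [m-1]\setminus\Des(w) = I(\alpha^\c)$, lies in $I(\gamma)$; this yields $L_{\alpha^\c}$. For the reversed-word cases, I would track how descent-type sets transform under $w\mapsto w^{\mathrm r}$: the map $j \mapsto m-j$ carries $\Des(w)=I(\alpha)$ to $I(\alpha^{\mathrm r})$ (which equals $\mathrm{StrictAsc}(w^{\mathrm r})$) and carries $\Asc(w)=I(\alpha^\c)$ to $I((\alpha^{\mathrm r})^\c)=I(\alpha^{\mathrm t})$ (which equals $\mathrm{WeakDes}(w^{\mathrm r})$). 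Repeating the argument with ``weakly decreasing'' and ``strictly increasing'' pieces then gives $L_{\alpha^{\mathrm r}}$ and $L_{\alpha^{\mathrm t}}$, respectively.

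No genuine obstacle is anticipated: the computation of $\zeta_\alpha$ from the iterated coproduct is immediate from \eqref{odot-maps}, and the rest is a combinatorial translation between descent/ascent conditions on $w$ and the subset partial order $\leq$ on compositions. The only place requiring any care is matching the four symmetry operations $(\alpha, \alpha^\c, \alpha^{\mathrm r}, \alpha^{\mathrm t})$ to the four conditions $({\leq},{>},{\geq},{<})$ paired with the correct word ($w$ or $w^{\mathrm r}$), and this is handled by the identities $I(\alpha^\c)=[m-1]\setminus I(\alpha)$ and $\{m-j : j\in I(\alpha)\}=I(\alpha^{\mathrm r})$ recorded above.
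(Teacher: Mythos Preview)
Your proposal is correct. The paper itself does not supply a proof of this proposition; it merely cites \cite[Proposition 5.5]{M1}. Your direct computation from \eqref{psi-def} and \eqref{odot-maps} is exactly the natural argument: the iterated coproduct splits $w$ into all ordered tuples of consecutive factors, the choice of $\alpha$ fixes the factor lengths, and the character $\zeta_\bullet$ imposes the relevant monotonicity condition on each factor, which translates into the containment $I(\alpha^\star)\subseteq I(\gamma)$ for the appropriate $\star\in\{\,\cdot\,,\c,\r,\t\}$. The bookkeeping you outline for the reversal cases (via $j\mapsto m-j$) is accurate and matches the identities $I(\alpha^\r)=\{m-j:j\in I(\alpha)\}$ and $\alpha^\t=(\alpha^\r)^\c=(\alpha^\c)^\r$ used implicitly in the paper.
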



Suppose $(V,\zeta_V)$ is a combinatorial bialgebra. If $\iota : U \to V$ is an injective graded bialgebra morphism
then $\zeta_U := \zeta_V \circ \iota \in \XX(U)$ and $\iota$ is a morphism $(U,\zeta_U) \to (V,\zeta_V)$.
Similarly, if $\pi : V \to W$ is a surjective graded bialgebra morphism 
with $\ker \pi \subset \ker \zeta_V$
then there exists a unique character $\zeta_W \in \XX(W)$ with $\zeta_V =  \zeta_W \circ \pi$,
and  $\pi$ is a morphism $(V,\zeta_V) \to (W, \zeta_W)$.
In the first case the unique morphism $(U,\zeta_U) \to (\QSym,\zetaq)$
 factors through $(V,\zeta_V)$ 
and in the second case $(V,\zeta_V) \to (\QSym,\zetaq)$ factors through $(W,\zeta_W)$.

Fix a symbol $\bullet \in \{ {\leq}, {\geq}, {<}, {>}\}$.
The bi-ideal $\bfI_\P \subset \bfW$ is contained in $\ker \zeta_\bullet$,
so $\zeta_\bullet $ and $\psi_\bullet $  factor through the quotient map $\pi : \bfW \to \bfW_\P$.
Let $\tilde \zeta_\bullet : \bfW_\P \to \kk[t] $ and $\tilde \psi_\bullet : \bfW_\P \to \QSym$  be the unique maps with $\zeta_\bullet = \tilde\zeta_\bullet \circ \pi$
and $\psi_\bullet = \tilde \psi_\bullet \circ \pi$. Then
$\tilde\zeta_\bullet \in \XX(\bfW_\P)$ and $\tilde \psi_\bullet$ is the unique morphism $(\bfW_\P,\tilde \zeta_\bullet) \to (\QSym,\zetaq)$.
If $\sim$ is a homogeneous $\P$-algebraic word relation, so that $\bfSigma_\P^{(\sim)} \subset \bfW_\P$ is a graded Hopf sub-algebra,
then $\tilde\zeta_\bullet $ restricts to an element of  $\XX(\bfSigma_\P^{(\sim)})$
and $\tilde\psi_\bullet$ restricts to the unique morphism of combinatorial Hopf algebras $(\bfSigma_\P^{(\sim)},\tilde\zeta_\bullet) \to (\QSym,\zetaq)$.

\begin{example} Suppose $\sim$ is the commutation relation from Example~\ref{nsym-ex}.
Recall that  $\NSym$ can be realized as the 
Hopf algebra $\bfSigma_\P^{(\sim)}$ 
by identifying $H_n$ with the $n$-letter word $11\cdots1 \in \sSigma_\P^{(\sim)}$.
The character $\tilde \zeta_\leq \in \XX(\bfSigma_\P^{(\sim)})$ corresponds to the algebra morphism $\NSym \to \kk[t]$ with $H_n \mapsto t^n$,
and $\tilde \psi_\leq(H_n) = L_{(n)} = \sum_{\alpha\vDash n} M_\alpha = \sum_{\lambda \vdash n} m_\lambda =h_n$
is the $n$th homogeneous symmetric function.
Thus $\tilde \psi_\leq$ gives the natural projection $\NSym \to\Sym$
with $H_n \mapsto h_n$ for $n \in \NN$.
\end{example}

If  $\sim$ is an algebraic word relation
so that $\bfSigma_\Reduced^{(\sim)}\subset \bfW$ is a graded sub-bialgebra,
 then $\zeta_\bullet$ restricts to an element of $\XX(\bfSigma_\Reduced^{(\sim)})$
and $\psi_\bullet$ restricts to the unique morphism  $(\bfSigma_\Reduced^{(\sim)},\zeta_\bullet) \to (\QSym,\zetaq)$.

\begin{example}\label{knuth-ex2}
Suppose $\sim$ is the Knuth equivalence relation from Example~\ref{knuth-ex}.
If $\lambda \vdash n$, then
the Schur function $s_\lambda \in \Sym$
has the formula
 $
s_\lambda = \sum_{\alpha\vDash n} d_{\lambda\alpha} L_\alpha 
$ 
where 
$d_{\lambda\alpha}$ is the number of standard tableaux  of shape $\lambda$
with descent set $I(\alpha)$
\cite[Eq.\ (3.18)]{QuasiSchurBook}.
Let $T$ be a semistandard tableau of shape $\lambda$ with $\max(T) \leq n$, and set $[[T,n]] =\sum_{w \sim T} [w,n] \in \sSigma^{(\sim)}$.
The RSK correspondence gives a descent-preserving bijection between the Knuth equivalence class
of $T $
and all standard tableaux of shape $\lambda$,
so   $\psi_{\leq}([[T,n]]) = s_\lambda$.
In turn, since the linear map $\QSym\to\QSym$ with $L_\alpha \mapsto L_{\alpha^\c}$
restricts on $\Sym$ to the map sending $s_\lambda \mapsto s_{\lambda^T}$
where $\lambda^T $ is the transpose of $\lambda$ \cite[\S3.6]{QuasiSchurBook},
 it follows from Proposition~\ref{words-opsi-prop}
that $\psi_{>}([[T,n]]) = s_{\lambda^T}$.
Applying the bialgebra morphism $\psi_\leq$ to the formulas \eqref{knuth-form} for the (co)product of $\bfSigma^{(\sim)}$
gives two versions of the Littlewood-Richardson rule; see \cite[\S2]{PylPat}.
\end{example}

\subsection{Multi-fundamental quasi-symmetric functions}

Fix $\bullet \in \{ {\leq}, {\geq}, {<}, {>}\}$.
Since $\bfW_\P$ has finite graded dimension,
the character $\tilde \zeta_\bullet : \bfW_\P \to \kk[t]$
extends to a continuous linear map $ \hat\bfW_\P \to \kk[[t]]$, which we denote with the same symbol,
and it holds that $\tilde \zeta_\bullet \in \XX(\hat\bfW_\P)$.
The morphism $\tilde\psi_\bullet: (\bfW_\P,\tilde\zeta_\bullet) \to (\QSym,\zetaq)$
likewise extends to a continuous linear map $\hat\bfW_\P \to \whQSym$, which we also denote with the same symbol.
This extension is the unique morphism of linearly compact combinatorial bialgebras 
$(\hat\bfW_\P,\tilde\zeta_\bullet) \to (\whQSym,\zetaq)$.

Given finite, nonempty subsets $S,T\subset \PP$, write $S \preceq T$ if $\max(S) \leq \min(T)$ and $S \prec T$ if $\max(S) < \min(T)$,
and define $x_S = \prod_{i \in S} x_i$.
In \cite[\S5.3]{LamPyl}, Lam and Pylyavskyy define the \emph{multi-fundamental quasi-symmetric function}
of a composition $\alpha \vDash n$ to be the power series
\be\label{multi-l-eq}
\tilde L_\alpha = \sum_{
\substack{
S_1 \preceq S_2 \preceq \dots \preceq S_n \\
S_j \prec S_{j+1}\text{ if }j \in I(\alpha)}
}
x_{S_1} x_{S_2} \cdots x_{S_n}
\in \whQSym
\ee
where the sum is over finite, nonempty sets $S_1,S_2,\dots,S_n$ of positive integers.

If $f \in \kk[[x_1,x_2,\dots]]$,
then we use the shorthand 
$f(\tfrac{x}{1-x})$ to denote
the power series obtained from $f$ by substituting
 $x_i \mapsto \frac{x_i}{1-x_i} = x_i + x_i^2+x_i^3+\dots$ for each $i \in \PP$. 
It is easy to check that if $f \in \QSym$ then $f(\tfrac{x}{1-x}) \in \whQSym$.
Recall that a multi-permutation is a packed word with no adjacent repeated letters.
The functions $\tilde L_\alpha$
arise naturally as the images of the pseudobasis of the
Hopf algebra $\mMR = \hat\bfSigma_\P^{(\sim)}$
when $\sim$ is $K$-equivalence,
under the morphisms $(\mMR, \tilde \zeta_\bullet) \to (\whQSym,\zetaq)$.

\begin{proposition}
Let $\sim$ be the $K$-equivalence relation from Example~\ref{k-ex}.
Suppose $w$ is a multi-permutation
and define $[[w]] = \sum_{v \sim w} v \in \sSigma_\P^{(\sim)}$.
If $\alpha \vDash \ell(w)$ has $\Des(w) = I(\alpha)$,
then  
\[
\tilde\psi_{<}([[w]]) = \tilde L_\alpha,
\quad
 \tilde\psi_{\leq}([[w]]) = \tilde L_\alpha(\tfrac{x}{1-x}),
 \quad \tilde\psi_{>}([[w^\r]]) = \tilde L_{\alpha^\r},
 \quand
\tilde\psi_{\geq}([[w^\r]]) = \tilde L_{\alpha^\r}(\tfrac{x}{1-x}).
\]
\end{proposition}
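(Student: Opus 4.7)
The plan is to apply Theorem~\ref{cont-abs-thm} to expand each $\tilde\psi_\bullet([[w]])$ as $\sum_\beta \zeta_\beta([[w]])\, M_\beta$ in the $M_\beta$ basis of $\whQSym$, and then to prove the four identities coefficient-by-coefficient via an explicit bijection between certain factorizations and multichains.

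Consider the first identity $\tilde\psi_<([[w]]) = \tilde L_\alpha$. Unwinding \eqref{hat-psi-def} by iterating the coproduct $\Delta_\odot$ of $\hat\bfSigma_\P^{(\sim)}$ and then applying $\tilde\zeta_<$ in each factor, one sees that $\zeta_\beta([[w]])$ counts pairs consisting of a packed word $v$ with $v \sim w$ and an ordered factorization $v = v^1 v^2 \cdots v^m$ in which each $v^i$ is strictly increasing of length $\beta_i$. Since $w$ is a multi-permutation, every such $v$ is uniquely of the form $v = w_1^{a_1} \cdots w_n^{a_n}$ with $a_j \geq 1$; because the positions of block $j$ all carry the letter $w_j$, a strictly increasing factor $v^i$ may contain at most one position from each block, and hence spans a contiguous range of blocks $[j_1, j_2]$ with $w_{j_1} < w_{j_1+1} < \cdots < w_{j_2}$. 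On the other side, expanding $\tilde L_\alpha$ in the monomial basis shows that $[M_\beta]\tilde L_\alpha$ counts multichains $(S_1,\ldots,S_n)$ of nonempty finite subsets of $\PP$ with $S_j \preceq S_{j+1}$, strictly so at $j \in I(\alpha) = \Des(w)$, and with each positive integer $i$ appearing in exactly $\beta_i$ of the sets $S_j$. I would then exhibit the bijection
\[
(v, v^1, \ldots, v^m) \;\longmapsto\; (S_1, \ldots, S_n), \qquad S_j = \{ i \in [m] : v^i \text{ contains a position in block } j \},
\]
and check that $|S_j| = a_j$, that $|\{j : i \in S_j\}| = \beta_i$, and that the chain inequalities hold because piece indices label positions of $v$ in increasing order.

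The case $\bullet = \leq$ proceeds by the same argument, except that a weakly increasing piece may contain several consecutive positions from a single block; the bijection then replaces each set $S_j$ by a multiset whose multiplicity at $i$ records the number of positions of $v^i$ inside block $j$, and the substitution $x_i \mapsto x_i/(1-x_i)$ in $\tilde L_\alpha$ precisely inflates each set $S_j$ into a multiset with the same support and arbitrary positive multiplicities. For the cases $\bullet \in \{>, \geq\}$ the same analysis applies after replacing $w$ by $w^\r$ and increasing by decreasing, where the forced split points of a decreasing factorization are the ascents of $w^\r$; a direct check using $\Des(w^\r) = I(\alpha^\t)$ identifies these ascents with $I(\alpha^\r)$, matching the strictness conditions in $\tilde L_{\alpha^\r}$ and $\tilde L_{\alpha^\r}(\tfrac{x}{1-x})$.

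The hard part will be verifying that the bijection in the first case is well-defined and invertible, and in particular that the strict inequality $S_j \prec S_{j+1}$ for $j \in \Des(w)$ in the multichain description corresponds exactly to the geometric obstruction forbidding any strictly increasing piece of $v$ from crossing a descent of $w$, and conversely that any valid multichain reconstructs a legitimate factorization with strictly increasing pieces of the prescribed lengths. Once this bijection is in place, the three remaining cases reduce essentially to bookkeeping.
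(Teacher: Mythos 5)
Your proposal is correct, but it is organized differently from the paper's proof, which is much shorter because it routes through Proposition~\ref{words-opsi-prop}: each member of the $K$-equivalence class of $w$ is $w_1^{m_1}w_2^{m_2}\cdots w_n^{m_n}$ for a unique choice of multiplicities $m_i\geq 1$, and $\tilde\psi_{<}$ of this single word is a fundamental quasi-symmetric function whose monomial expansion is exactly the portion of the sum \eqref{multi-l-eq} with $|S_i|=m_i$; summing over all multiplicities gives $\tilde L_\alpha$, the case of $\tilde\psi_{\leq}$ is the multiset analogue (hence the substitution $x_i\mapsto x_i/(1-x_i)$), and the two reversed identities follow by applying the continuous map $L_\alpha\mapsto L_{\alpha^\r}$ rather than redoing any analysis for $w^\r$. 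You instead bypass Proposition~\ref{words-opsi-prop} and verify the coefficient of each $M_\beta$ directly from \eqref{hat-psi-def}, matching factorizations of class members into strictly (resp.\ weakly) increasing consecutive pieces with multichains of sets (resp.\ multisets) via $S_j=\{i : v^i \text{ meets block } j\}$. This does work: a strictly increasing piece meets each block at most once, it can cross the junction between blocks $j$ and $j+1$ if and only if $j\notin\Des(w)$, which is exactly the dichotomy $\preceq$ versus $\prec$ in \eqref{multi-l-eq}, and the inverse map (label the $r$-th position of block $j$ by the $r$-th smallest element of $S_j$) is well defined; your identification of the forced split points for the reversed cases, via $\Des(w^\r)=I(\alpha^\t)$ and ascent set $I(\alpha^\r)$, is also correct for a multi-permutation. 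The trade-off is that your route reproves at the monomial level content already packaged in Proposition~\ref{words-opsi-prop}, but it is self-contained and makes the enumerative meaning of $\zeta_\beta([[w]])$ explicit (one should add the continuity remark that $\zeta_\beta([[w]])=\sum_{v\sim w}\zeta_\beta(v)$ is a finite sum, since only words of length $|\beta|$ contribute). Two small points to tighten: in your description of the coefficient of $M_\beta$ in $\tilde L_\alpha$, integers $i>\ell(\beta)$ must appear in none of the $S_j$; and in the $\leq$ case the comparisons $\preceq$, $\prec$ on multisets should be read on supports, which is precisely what the substitution $x_i\mapsto x_i/(1-x_i)$ preserves.
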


The first identity is equivalent to \cite[Theorem 5.11]{LamPyl}.

\begin{proof}
Assume $w=w_1w_2\cdots w_n$ has $n$ letters and 
let $m_1,m_2,\dots,m_n \in \PP$.
The first identity holds since, by
Proposition~\ref{words-opsi-prop},
$\tilde\psi_<$
 applied to the word $(w_1w_1\cdots w_1)(w_2w_2\cdots w_2) \cdots (w_nw_n\cdots w_n)\sim w$
with each $w_i$ repeated $m_i$ times
gives the sum in \eqref{multi-l-eq} restricted to subsets with $|S_i| = m_i$.
It follows in a similar way that $\tilde\psi_{\leq}([[w]])$
has the same formula \eqref{multi-l-eq}
except with the sum over finite, nonempty multisets $S_1,S_2,\dots,S_n$,
which is just $\tilde L_\alpha ( \frac{x}{1-x} )$.
 The other identities are proved analogously.
\end{proof}

We shift our attention to
the species coalgebroid $(\sW, \nabla_\shuffle,\iota_\shuffle,\Delta_\odot,\epsilon_\odot)$.
For each $\bullet \in \{ {\leq}, {\geq}, {<}, {>}\}$,
let $\cZ_\bullet : \sW \to \bfE(\kk[[t]])$ be the natural transformation 
whose $S$-component for each finite set $S$ is the
continuous linear map $\sW[S] \to \kk[[t]]$ with
$[w,\lambda] \mapsto \zeta_\bullet([w,n])$ for $[w,\lambda]\in \Words_\lambda \subset \sW[S]$.
Since there are only finitely many words of a given length with all letters at most $n$, the following holds:

\begin{corollary}
For each symbol $\bullet \in \{ {\leq}, {\geq}, {<}, {>}\}$,
it holds that $\cZ_\bullet \in \XX(\sW)$.
\end{corollary}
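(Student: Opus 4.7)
The plan is to verify that $\cZ_\bullet$ is a natural transformation with continuous components and that the triple $(\sW,\cZ_\bullet)$ satisfies the three conditions in the definition of a combinatorial coalgebroid. The strategy throughout is to pull each assertion on $\sW[S]$ back to $\hat\bfW_{|S|}$ via the isomorphism $\sW[\lambda^{-1}]$, so that the claim reduces to a corresponding identity in the bialgebra $(\bfW,\nabla_\shuffle,\iota_\shuffle,\Delta_\odot,\epsilon_\odot)$ of Theorem~\ref{w-thm}, where $\zeta_\bullet$ is already known to be a character.

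First I would dispense with continuity and naturality. Because $\cZ_{\bullet,S}([w,\lambda]) = \zeta_\bullet([w,|S|])$ depends only on the underlying word $w$, the coefficient $[t^n]\cZ_{\bullet,S}([w,\lambda])$ is nonzero only when $w$ is a monotone word of the appropriate type with $\ell(w)=n$ and $\max(w)\le |S|$; there are only finitely many such $w$ and only finitely many bijections $\lambda$, so Lemma~\ref{continuous-lem} gives continuity. Naturality is automatic, since $\sW[\sigma]$ sends $[w,\lambda]$ to $[w,\sigma\circ\lambda]$ (same $w$), while $\bfE(\kk[[t]])[\sigma] = \id$. Conditions~(a) and (b) are then routine: (a) holds because $\cZ_{\bullet,S}|_{t=0}$ and $\epsilon_\odot$ both take the value $1$ on $[\emptyset,\lambda]$ and vanish on all other basis elements (by \eqref{spdelta-eq}), and (b) reduces to $\cZ_{\bullet,\varnothing}([\emptyset,\id_\varnothing]) = t^0 = 1$.

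The substantive step is multiplicativity, condition~(c). By \eqref{spnabla-eq} it suffices to prove
\[
\zeta_\bullet\bigl(\nabla_\shuffle([w,n]\otimes [w',m])\bigr) = \zeta_\bullet([w,n])\,\zeta_\bullet([w',m])
\]
for the shuffle product in $\hat\bfW$. Here the letters of $w$ lie in $[n]$ while those of $w'\uparrow n$ lie in $\{n+1,\dots,n+m\}$, so the alphabets are disjoint and every shuffle is a distinct word in the sum. One then observes that exactly one interleaving is monotone of the required type, namely the concatenation $w(w'\uparrow n)$ when $\bullet \in \{\leq,<\}$ and $(w'\uparrow n)w$ when $\bullet \in \{\geq,>\}$; moreover that interleaving is monotone precisely when both $w$ and $w'$ are. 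Hence the left side equals $t^{\ell(w)+\ell(w')}$ exactly when both factors on the right are nonzero, matching the product. The main obstacle is purely bookkeeping across the twisted summands $\hat\bfW_\lambda$; the combinatorial content of multiplicativity is essentially trivial once one exploits the disjoint-alphabet feature built into the shifted shuffle.
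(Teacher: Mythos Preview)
Your proof is correct and follows essentially the same route as the paper, only more explicitly. The paper's entire justification is the single sentence preceding the corollary, ``Since there are only finitely many words of a given length with all letters at most $n$, the following holds,'' which handles continuity; the remaining conditions (naturality, counit compatibility, unit, multiplicativity) are left implicit because by construction every structure map on $\sW[S]$ is pulled back from the corresponding map on $\hat\bfW_{|S|}$, where the previous proposition already gives $\zeta_\bullet \in \XX(\bfW)$.

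The one place you do slightly more than the paper is in condition~(c): rather than simply invoking that $\zeta_\bullet$ is multiplicative on $\bfW$ (which is what $\zeta_\bullet\in\XX(\bfW)$ means), you reprove this directly via the disjoint-alphabet observation. That argument is correct and is indeed the content of the cited \cite[Proposition~5.4]{M1}, so you are just unpacking what the paper takes as given. Either way the substance is the same: the only new ingredient beyond $\zeta_\bullet\in\XX(\bfW)$ is the finiteness check for continuity, and you identify it correctly.
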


For each $\bullet \in \{ {\leq}, {\geq}, {<}, {>}\}$,
define $\Psi_\bullet : \sW \to \EQSym$
to be the natural transformation whose $S$-component is the continuous linear map
$\sW[S] \to \whQSym$ with 
$[w,\lambda] \mapsto \psi_\bullet([w,n])$ for each 
finite set $S$ of size $n$ and each pair $[w,\lambda] \in \Words_S$.
The following is apparent from Corollary~\ref{extended-abs-cor}:

\begin{corollary}
For each $\bullet \in \{ {\leq}, {\geq}, {<}, {>}\}$,
it holds that $\Psi_\bullet$ is the
unique morphism of combinatorial coalgebroids $(\sW,\cZ_\bullet) \to (\EQSym,\Zetaq)$.
\end{corollary}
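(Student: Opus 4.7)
The plan is to reduce the statement to Corollary~\ref{extended-abs-cor} by verifying that $(\sW,\cZ_\bullet)$ is indeed a combinatorial coalgebroid and that $\Psi_\bullet$, as defined, agrees with the morphism produced by that corollary. Both verifications amount to transporting structure already established for $\bfW$ through the identifications $\sW[\lambda]$ between $\hat\bfW_{|S|}$ and $\hat\bfW_\lambda$.

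First I would check the three conditions in the definition of a combinatorial coalgebroid for $(\sW,\cZ_\bullet)$. Naturality of $\cZ_\bullet$ is immediate from its component-wise description, since $\cZ_\bullet([w,\lambda]) = \zeta_\bullet([w,n])$ depends only on $w$ and not on the bijection $\lambda$, so it intertwines $\sW[\sigma]$ with the identity on $\bfE(\kk[[t]])$. Condition (a) says that each $(\sW[S], \Delta_S, \epsilon_S, \cZ_S)$ is a linearly compact combinatorial coalgebra: this follows because $(\hat\bfW_n, \Delta_\odot, \epsilon_\odot, \zeta_\bullet)$ is a linearly compact combinatorial coalgebra (an extension by Example~\ref{ext-ex} of the graded combinatorial coalgebra $(\bfW_n,\Delta_\odot,\epsilon_\odot,\zeta_\bullet)$), and the defining equations \eqref{spdelta-eq} transport this structure to $\sW[S]$ through the isomorphisms $\sW[\lambda]$. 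Condition (b) holds since $\cZ_\varnothing([\emptyset,\id_\varnothing]) = \zeta_\bullet([\emptyset,0]) = 1$.

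The main computation is condition (c), the multiplicativity of $\cZ_\bullet$ with respect to $\nabla_\shuffle$. I would verify this by unwinding the definition \eqref{spnabla-eq}: for disjoint $S$ and $T$ with $|S|=n$, $|T|=m$, and pseudobasis elements $[v,\lambda] \in \hat\bfW_\lambda$ and $[w,\mu] \in \hat\bfW_\mu$, one has
\[
\cZ_{S \sqcup T} \circ \nabla_\shuffle([v,\lambda]\otimes[w,\mu]) = \zeta_\bullet\bigl([v \shuffle (w\uparrow n), n+m]\bigr) = \zeta_\bullet([v,n])\,\zeta_\bullet([w,m]),
\]
where the last equality is exactly the fact that $\zeta_\bullet \in \XX(\bfW)$ is an algebra morphism for $(\bfW,\nabla_\shuffle,\iota_\shuffle)$. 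By continuity and linearity this extends to all of $\sW[S]\htimes \sW[T]$. This shows $(\sW,\cZ_\bullet)$ is a combinatorial coalgebroid, so Corollary~\ref{extended-abs-cor} applies and produces a unique morphism $\Psi : (\sW,\cZ_\bullet) \to (\EQSym,\Zetaq)$.

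It remains to identify this $\Psi$ with our $\Psi_\bullet$. Fix a finite set $S$ of size $n$ and a bijection $\lambda \in \L[S]$. Since $\sW[\lambda^{-1}]$ is an isomorphism of linearly compact combinatorial coalgebras from $(\hat\bfW_\lambda, \cZ_\bullet|_{\hat\bfW_\lambda})$ to $(\hat\bfW_n, \zeta_\bullet)$, and since the unique morphism $(\hat\bfW_n,\zeta_\bullet) \to (\whQSym,\zetaq)$ is the extension of $\psi_\bullet$ guaranteed by Theorem~\ref{cont-abs-thm}, the unique morphism $(\hat\bfW_\lambda, \cZ_\bullet|_{\hat\bfW_\lambda}) \to (\whQSym,\zetaq)$ sends $[w,\lambda]$ to $\psi_\bullet([w,n])$. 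This is exactly the $S$-component of $\Psi_\bullet$ restricted to $\hat\bfW_\lambda$. Hence $\Psi_\bullet = \Psi$, completing the proof. The main (only genuine) obstacle is bookkeeping the identifications $\sW[\lambda]$ so that naturality, the coalgebra structure, and the character all transport simultaneously; once that is clear, everything reduces to the already-established case of $\bfW$.
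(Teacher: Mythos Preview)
Your proposal is correct and follows the same approach as the paper, which simply says the result is ``apparent from Corollary~\ref{extended-abs-cor}.'' Note that your first step, verifying that $(\sW,\cZ_\bullet)$ is a combinatorial coalgebroid, is already the content of the corollary immediately preceding this one (which asserts $\cZ_\bullet \in \XX(\sW)$), so you are re-proving that; otherwise your identification of $\Psi_\bullet$ with the canonical $\Psi$ from Corollary~\ref{extended-abs-cor} is exactly what the paper leaves to the reader.
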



If  $\sim$ is an algebraic word relation
so that $\sS^{(\sim)}\subset \sW$ is sub-coalgebroid,
 then the natural transformation $\cZ_\bullet$ restricts to an element of $\XX(\sS^{(\sim)})$
and $\Psi_\bullet$ restricts to the unique morphism 
of combinatorial coalgebroids $(\sS^{(\sim)},\cZ_\bullet) \to (\EQSym,\Zetaq)$.

\begin{example}
Suppose $\sim$ is the Hecke equivalence relation from Example~\ref{hecke-ex}.
Given $\pi \in S_{n+1}$, 
define $[[\pi]] = \sum_w [w,n] \in \sSigma_n^{(\sim)}$ where the sum is over all Hecke words $w$ for $\pi$,
and let 
\be\label{groth-eq}
\tilde K_\pi = \Psi_>([[\pi]]),
\qquad
J_\pi  = \Psi_{\leq}([[\pi]]),
\qquand G_\pi = (-1)^{\ell(\pi)} \tilde K_\pi(-x_1,-x_2,\dots).
\ee
The  functions $G_\pi $ are the \emph{stable Grothendieck polynomials} \cite{Buch,BKSTY}.
Following \cite{LamPyl,PylPat}, we call $J_\pi$ and $\tilde K_\pi$ the \emph{weak stable Grothendieck polynomials} and \emph{signless stable Grothendieck polynomials}.
Write $\omega$ for the continuous linear involution of $\whQSym$ with $L_\alpha \mapsto L_{\alpha^\t}$.
Proposition~\ref{words-opsi-prop} implies that $J_\pi = \omega(\tilde K_\pi)$.
By \cite[Theorem 6.12]{Buch}, $J_\pi$ and $\tilde K_\pi$ are Schur positive elements of $\whSym$
and $G_\pi \in \whSym$.
\end{example}

One says that $\pi \in S_n$ is \emph{Grassmannian} if $\pi_1<\dots<\pi_p> \pi_{p+1} <\dots<\pi_n$ for some $p \in [n]$.
In this case let $\lambda(\pi)$ be the partition sorting $(\pi_1-1,\pi_2-2,\dots,\pi_p-p)$.
If $\pi$ is Grassmannian then the functions \eqref{groth-eq} depend only on $\lambda(\pi)$.
Given a partition $\lambda$, define
$
\tilde K_\lambda = \tilde K_\pi
$,
$J_\lambda = J_\pi$, 
and
$G_\lambda =G_\pi$,
where 
$\pi \in \bigsqcup_{n \in \NN} S_n$ is any Grassmannian permutation with $\lambda=\lambda(\pi)$.
By \cite[Theorem 1]{BKSTY}, each $J_\pi$ is a finite $\NN$-linear combination of $J_\lambda$'s,
and each $\tilde K_\pi$ is a finite $\NN$-linear combination of $\tilde K_\lambda$'s.

\begin{example}\label{pylpat-ex}
Let $\sim$ be $K$-Knuth equivalence
so that $\KPR = \bfSigma_\P^{(\sim)}$ is the $K$-theoretic Poirier-Reutenauer bialgebra of \cite{PylPat}.
Then $\tilde \psi_{\leq}$ is a morphism
of linearly compact Hopf algebras $\hat\bfSigma_\P^{(\sim)}\to \whSym$ by \cite[Theorem 6.23]{PylPat}.
If $w$ is a packed word and $[[w]] = \sum_{v\sim w} v \in \sSigma_\P^{(\sim)}$,
then one has
\[
\tilde \psi_{\leq}([[w]]) = J_{\lambda_1} + J_{\lambda_2} + \dots + J_{\lambda_m}
\qquand 
\tilde \psi_{>}([[w]]) = \tilde K_{\lambda_1} + \tilde K_{\lambda_2} + \dots + \tilde K_{\lambda_m}
\]
where $\lambda_1,\lambda_2,\dots,\lambda_m$ are the (not necessarily distinct) shapes
of the finite number of increasing tableaux in the $K$-Knuth equivalence class of $w$ \cite[Theorem 6.24]{PylPat}.
\end{example}

\subsection{Peak quasi-symmetric functions}

Recall the monoidal  structure on $\XX(\bfW)$:
if $\zeta,\zeta' \in \XX(\bfW)$ then $\zeta\zeta' := \nabla_{\kk[t]} \circ (\zeta \otimes \zeta') \circ \Delta_\odot \in \XX(\bfW)$.
For any symbols $\bullet,\circ \in \{ {\leq}, {\geq}, {<}, {>}\}$, we can therefore define
$\zeta_{\bullet|\circ} = \zeta_\bullet\zeta_\circ \in \XX(\bfW)$
and let $\psi_{\bullet|\circ}$ be the unique morphism $(\bfW,\zeta_{\bullet|\circ}) \to (\QSym,\zetaq)$.
For example, if $[w,n] \in \Words$ then
\be
\label{up-down-eq} \zeta_{>|\leq}([w,n]) = \begin{cases} 1 & \text{if }w=\emptyset, \\
2t^m&\text{if }w_1>\dots>w_i \leq w_{i+1} \leq \dots \leq w_m \text{ where $1 \leq i \leq m=\ell(w)$} \\
0&\text{otherwise}.
\end{cases}
\ee
Similar formulas hold for the other possibilities of $\zeta_{\bullet|\circ} $.

One calls $\alpha = (\alpha_1,\alpha_2,\dots,\alpha_l) \vDash n$
a \emph{peak composition} if $\alpha_i\geq 2$ for $1 \leq i < l$, i.e., if $1 \notin I(\alpha)$ and  $i \in I(\alpha)$ $\Rightarrow $ $i\pm 1 \notin I(\alpha)$.
The number of peak compositions of $n$ is the $n$th Fibonacci number.
The \emph{peak quasi-symmetric function} \cite[Proposition 2.2]{Stem} of a peak composition $\alpha \vDash n$ is 
\[
K_\alpha = \sum_{\substack{
\beta \vDash n
\\
I(\alpha) \subset I(\beta) \cup (I(\beta)+1)
}} 2^{\ell(\beta)} M_\beta \in \QSym_n.
\]
Such functions are a basis for a graded Hopf subalgebra of $\QSym$,
 called \emph{Stembridge's peak subalgebra} or the \emph{odd subalgebra} \cite[Proposition 6.5]{ABS},
 which we denote by 
$ \OQSym$. 

Let
$\Peak(w) = \{ i \in [2,n-1] : w_{i-1} \leq w_i > w_{i+1}\}$
and
$\Valley(w) = \{i \in [2,n-1] : w_{i-1} \geq w_i < w_{i+1}\}$ 
for a word $w=w_1w_2\cdots w_n$.
For each $\alpha \vDash n$, let $\Lambda(\alpha)\vDash n$ be the peak composition such that 
\[I(\Lambda(\alpha)) = \{ i\geq 2 : i \in I(\alpha),\ i-1\notin I(\alpha)\}.\]
If $w$ is a word and $\alpha \vDash \ell(w)$ and $\Des(w) = I(\alpha)$, then $\Peak(w) = I(\Lambda(\alpha))$.
%
Finally, given a peak composition $\alpha = (\alpha_1,\alpha_2,\dots,\alpha_l)$,
define $\alpha^{\rpk} = (\alpha_l+1, \alpha_{l-1},\dots,\alpha_2,\alpha_1-1)$.

\begin{proposition}[{\cite[Proposition 5.7]{M1}}]\label{words-kpsi-prop}
If  $[w,n] \in \Words$ and  $\alpha,\beta \vDash \ell(w)$ are compositions such that $\Peak(w) = I(\alpha)$ and $\Valley(w) = I(\beta)$, then
\[
\psi_{>|\leq}([w,n]) = K_\alpha,
\text{\ \ }
\psi_{<|\geq}([w,n]) = K_\beta,
\text{\ \ }
\psi_{\geq|<}([w^\r,n]) = K_{\alpha^{\rpk}},
\text{\ \ and\ \ }
\psi_{\leq|>}([w^\r,n]) = K_{\beta^{\rpk}}.
\]
\end{proposition}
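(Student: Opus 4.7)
The plan is to verify all four identities uniformly by expanding each $\psi_{\bullet|\circ}([v,n])$ with the universal formula \eqref{psi-def} and matching coefficients to the definition of $K_\delta$.

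For each of the four pairs $(\bullet,\circ) \in \{(>,\leq),(<,\geq),(\geq,<),(\leq,>)\}$, a direct calculation analogous to \eqref{up-down-eq} shows that $\zeta_{\bullet|\circ}([u,n]) = 2t^{\ell(u)}$ precisely when the nonempty word $u$ has the valid shape determined by $(\bullet,\circ)$ (for example, strictly decreasing then weakly increasing when $(\bullet,\circ)=(>,\leq)$), and vanishes otherwise. Equivalently, $\zeta_{\bullet|\circ}$ is nonzero on $[u,n]$ iff $u$ contains no \emph{bad} junction, where a position $i$ is bad when $u_{i-1},u_i,u_{i+1}$ realize the pattern forbidden by $(\bullet,\circ)$ (e.g.\ $u_{i-1}\leq u_i > u_{i+1}$ for $(>,\leq)$, i.e.\ a peak). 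Unfolding the iterated coproduct $\Delta_\odot^{(\ell(\gamma)-1)}$, the $M_\gamma$-coefficient of $\psi_{\bullet|\circ}([v,n])$ is $2^{\ell(\gamma)}$ when every piece of the consecutive $\gamma$-decomposition of $v$ is valid, and $0$ otherwise.

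The key combinatorial step is to show that every piece of the $\gamma$-decomposition is valid iff every bad position of $v$ lies in $I(\gamma)\cup (I(\gamma)+1)$. The ``only if'' direction is clear: if $i$ is bad and neither $i-1$ nor $i$ is in $I(\gamma)$, then both transitions $i-1$ and $i$ lie inside the same piece, producing the forbidden pattern there. Conversely, any invalid piece contains an adjacent pair of transitions realizing the forbidden pattern, which locates a bad position $i$ with both $i-1$ and $i$ internal to the piece. Since bad positions lie in $[2,n-1]$ and are pairwise non-adjacent (two adjacent bad positions would force contradictory inequalities among three consecutive letters), the bad set of $v$ equals $I(\delta)$ for a unique peak composition $\delta \vDash \ell(v)$, and we conclude $\psi_{\bullet|\circ}([v,n]) = K_\delta$.

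It remains to identify $\delta$ in each case. For $v=[w,n]$, the bad positions are $\Peak(w)=I(\alpha)$ when $(\bullet,\circ)=(>,\leq)$ and $\Valley(w)=I(\beta)$ when $(\bullet,\circ)=(<,\geq)$, giving the first two identities. For $v=[w^\r,n]$ and $(\bullet,\circ)=(\geq,<)$, a position $j$ is bad iff $(w^\r)_{j-1}<(w^\r)_j\geq(w^\r)_{j+1}$, which by the substitution $k=n+1-j$ becomes $w_{k-1}\leq w_k>w_{k+1}$, i.e.\ $k\in\Peak(w)=I(\alpha)$; hence the bad set of $w^\r$ equals $\{n+1-k:k\in I(\alpha)\}$, and a short partial-sum computation verifies that this set is precisely $I(\alpha^{\rpk})$, yielding $\psi_{\geq|<}([w^\r,n])=K_{\alpha^{\rpk}}$. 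The identity $\psi_{\leq|>}([w^\r,n])=K_{\beta^{\rpk}}$ follows by the exact same argument with valleys replacing peaks. The main obstacle is the combinatorial characterization of valid $\gamma$-decompositions in terms of bad positions, together with the index-arithmetic identification of $\{n+1-k:k\in I(\alpha)\}$ with $I(\alpha^{\rpk})$; beyond these two steps, everything is routine bookkeeping.
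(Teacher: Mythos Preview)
The paper does not supply its own proof of this proposition; it simply cites \cite[Proposition 5.7]{M1}. Your argument is therefore not competing with anything in the paper, and it is a correct, self-contained direct proof. The key computation---that for $\zeta = \zeta_{\bullet|\circ}$ the coefficient $\zeta_\gamma([v,n])$ in \eqref{psi-def} equals $2^{\ell(\gamma)}$ precisely when every consecutive $\gamma$-block of $v$ avoids the pattern forbidden by $(\bullet,\circ)$, and that this in turn is equivalent to the ``bad'' set of $v$ being contained in $I(\gamma)\cup(I(\gamma)+1)$---is exactly the content needed, and your verification of both directions is clean. The identification of $\{\ell(w)+1-k : k \in I(\alpha)\}$ with $I(\alpha^{\rpk})$ via partial sums is likewise correct (note that $\alpha_1\geq 2$ whenever $\alpha$ is a peak composition with more than one part, so $\alpha^{\rpk}$ is a genuine composition).

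One small notational slip: in your substitution $k=n+1-j$ you are using $n$ to denote $\ell(w)$, but in the statement $n$ is the second coordinate of $[w,n]\in\Words$, which need not equal $\ell(w)$. Replace $n$ by $\ell(w)$ there to avoid ambiguity. This does not affect the mathematics.
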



For each $\bullet,\circ \in \{ {\leq}, {\geq}, {<}, {>}\}$, write
$\tilde\zeta_{\bullet|\circ} =\tilde\zeta_\bullet \tilde\zeta_\circ  : \bfW_\P \to \kk[t]$
and
$\tilde\psi_{\bullet|\circ} =\tilde\psi_\bullet \tilde\psi_\circ  : \bfW_\P \to \QSym$
for the maps such that $\zeta_{\bullet|\circ} = \tilde\zeta_{\bullet|\circ} \circ \pi$
and
$\psi_{\bullet|\circ} = \tilde\psi_{\bullet|\circ} \circ \pi$
where $\pi : \bfW \to \bfW_\P$ is the quotient map.

\begin{example}
Again suppose $\sim$ is the commutation relation from Example~\ref{nsym-ex},
so that we can identify 
$\NSym\cong \bfSigma_\P^{(\sim)}$ 
by setting $H_n=11\cdots1 \in \sSigma_\P^{(\sim)}$.
The character $\tilde \zeta_{>|\leq} $ corresponds to the algebra morphism $\NSym \to \kk[t]$ with 
 $H_n \mapsto 2t^n$ for $n>0$,
and we have
\[  \tilde \psi_{>|\leq}(H_n)= 
K_{(n)} = \sum_{\alpha\vDash n} 2^{\ell(\alpha)} M_\alpha = \sum_{\lambda \vdash n} 2^{\ell(\lambda)} m_\lambda =q_n\]
where $q_n \in \Sym$ is the symmetric function such that $\sum_{n\geq 0} q_n t^n = \prod_{i\geq 1} \frac{1+x_i t}{1-x_it}$ (see \cite[\S A.1]{Stem}).
Thus, in this case $\tilde\psi_{>|\leq}$ is the composition of the natural projection $\NSym \to \Sym$ with 
the algebra morphism denoted $\theta : \Sym \to \Sym$ in \cite[Remark 3.2]{Stem}.
\end{example}

Define $\OSym = \kk[q_1,q_2,q_3,\dots]$. By \cite[Theorem 3.8]{Stem},
it holds that $\OSym = \Sym \cap \OQSym$
is a graded Hopf subalgebra of
$\Sym$.
This subalgebra has a distinguished basis $\{Q_\lambda\}$ indexed by strict partitions $\lambda$,
known as the \emph{Schur $Q$-functions}; see \cite[\S A.1]{Stem} for the definition.

\begin{example}\label{peak-knuth-ex}
Suppose $\sim$ is Knuth equivalence 
and $T$ is a semistandard tableau of shape $\lambda$ with $\max(T) \leq n$.
The morphism $\psi_{>|\leq} : (\bfSigma^{(\sim)},\zeta_{>|\leq}) \to (\QSym,\zetaq)$
then has
$\psi_{>|\leq}([[T,n]]) = S_\lambda$
where 
$S_\lambda  \in \OSym$ is the \emph{Schur $S$-function} of shape $\lambda$ \cite[Chapter III, \S8, Ex.\ 7]{Macdonald}.
Each $S_\lambda$ is an
$\NN$-linear combination of Schur $Q$-functions, i.e., is \emph{Schur $Q$-positive}.
\end{example}

\begin{example}
If $\sim$ is Hecke equivalence,
then applying $\psi_{>}$ and $\psi_{>|\leq}$
to the elements of the natural basis $\sSigma_\Reduced^{(\sim)}$ of the bialgebra
of reduced classes $\bfSigma_\Reduced^{(\sim)}$
gives the \emph{Stanley symmetric functions} $F_\pi$ and $F^C_\pi$ of types A and C; 
see the discussion in \cite{M1}.
\end{example}

\subsection{Symmetric functions}

Suppose $\sim$ is a uniformly algebraic word relation.
It is natural to ask when the image of $\hat\bfSigma_\P^{(\sim)}$ under
$\tilde\psi_\bullet$ is contained in $\whSym$,
or equivalently when the image of $\sS^{(\sim)}$ under $\Psi_\bullet$ is a subspecies of $\bfE(\whSym)$.
In turn, one can ask when $\tilde\psi_\bullet(\kappa)$
is Schur positive for all elements $\kappa \in \sSigma_\P^{(\sim)}$.

%
%


\begin{theorem}\label{big-thm1}
Let $\sim$ be a uniformly algebraic word relation.
The following are equivalent: 
\ben
\item[(a)] The image of $\hat\bfSigma_\P^{(\sim)}$ under $\tilde\psi_\leq$ is contained in $\whSym$.
\item[(b)] The image of $\hat\bfSigma_\P^{(\sim)}$ under $\tilde\psi_>$ is contained in $\whSym$.
\item[(c)] The relation $\sim$ extends Knuth equivalence or $K$-Knuth equivalence.
\een
Moreover, if these conditions hold and 
$E$ is any $\sim$-equivalence class of packed words, then the symmetric functions $\tilde\psi_{\leq}(\kappa_E)$ and $\tilde\psi_{>}(\kappa_E)$ are both Schur positive.
\end{theorem}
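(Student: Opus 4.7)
The plan is to dispose of $(c)\Rightarrow(a),(b)$ together with the Schur-positivity assertion by direct appeal to earlier examples, and then to prove $(a)\Rightarrow(c)$ (with $(b)\Rightarrow(c)$ parallel) by combining the symmetry constraint imposed by $\tilde\psi_\leq$ with the rigidity of uniform algebraicness.

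For the easy direction, suppose $\sim$ extends Knuth equivalence. Then every $\sim$-class $E\subseteq\Packed$ is a disjoint union of Knuth equivalence classes. By Example~\ref{knuth-ex2}, each Knuth class $E_0$ whose insertion tableau has shape $\lambda$ satisfies $\tilde\psi_{\leq}(\kappa_{E_0})=s_\lambda$, and Proposition~\ref{words-opsi-prop} applied analogously gives $\tilde\psi_{>}(\kappa_{E_0})=s_{\lambda^T}$. Summing over the constituent Knuth classes, $\tilde\psi_{\leq}(\kappa_E)$ and $\tilde\psi_{>}(\kappa_E)$ are $\NN$-linear combinations of Schur functions, so lie in $\whSym$ and are Schur positive. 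If instead $\sim$ extends $K$-Knuth equivalence, Example~\ref{pylpat-ex} gives $\tilde\psi_{\leq}(\kappa_E)=\sum_i J_{\lambda_i}$ and $\tilde\psi_{>}(\kappa_E)=\sum_i \tilde K_{\lambda_i}$, where the $\lambda_i$ range over the shapes of increasing tableaux in the various $K$-Knuth subclasses comprising $E$; these lie in $\whSym$ and are Schur positive by Buch's theorem cited just after~\eqref{groth-eq}.

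For $(a)\Rightarrow(c)$, I would first observe that $\tilde\psi_\leq$ sends a packed word of length $n$ into $\QSym_n$, so the hypothesis reduces to requiring $\sum_{w\in E,\,\ell(w)=n}L_{\alpha(w)}\in\Sym_n$ for every $\sim$-class $E$ and every $n$, where $\alpha(w)\vDash n$ has $I(\alpha(w))=\Des(w)$. This is a strong combinatorial constraint on the descent-composition multiplicities within each length-$n$ layer of each $\sim$-class. Applying it to the class of $213=bac$ and using Definition~\ref{alg-def2}(c) to propagate relations across alphabets together with Definition~\ref{alg-def2}(b) to restrict to sub-intervals, I would show that the only option compatible with uniform algebraicness is $213\sim 231$, giving $bac\sim bca$; a parallel analysis on $132=acb$ yields $acb\sim cab$. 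I would then split into cases according to whether $a\sim aa$ holds (an alphabet-independent condition by Definition~\ref{alg-def2}(c)). In the homogeneous case $a\not\sim aa$: the class of $121=aba$ needs a length-three partner with descent at position one, which must be $211$ or $212$; but $121\sim 212$ restricted to $I=\{1\}$ yields $11\sim 1$, contradicting homogeneity, so $121\sim 211$. A parallel argument for the class of $212$, noting that $212\sim 121$ likewise forces $1\sim 11$ on restriction to $I=\{1\}$, forces $212\sim 221$. Hence both Knuth relations $aba\sim baa$ and $bab\sim bba$ hold and $\sim$ extends Knuth equivalence. In the inhomogeneous case $a\sim aa$: balance for the class of $121$ requires $121\sim 211$ or $121\sim 212$; the second case yields the $K$-Knuth move $aba\sim bab$ directly, while the first case propagates through the class of $212$ to produce either the Knuth move $bab\sim bba$ (giving that $\sim$ extends Knuth) or $aba\sim bab$ via transitivity through $212\sim 121\sim 211$ (giving that $\sim$ extends $K$-Knuth). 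The implication $(b)\Rightarrow(c)$ is entirely analogous using the complementation $\alpha\mapsto\alpha^\c$ from Proposition~\ref{words-opsi-prop}, and the Schur-positivity assertion then follows from $(c)$ via the easy direction.

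The main obstacle is the length-three case analysis just sketched, and especially the step that uses interval restriction to show that the non-Knuth identifications $121\sim 212$ and $211\sim 221$ each force $a\sim aa$. This mechanism---local length-three symmetry plus restriction to singleton intervals---pins down the entire uniformly algebraic relation and forces the Knuth/$K$-Knuth dichotomy. Once the three-letter generators are established, the congruence axiom (Definition~\ref{alg-def2}(a)) propagates them to all longer words automatically, so all of the work really concentrates at small length.
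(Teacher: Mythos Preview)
Your proposal is correct and follows essentially the same approach as the paper: reduce the symmetry constraint to conditions on length-three words, and use interval restriction (Definition~\ref{alg-def2}(b)) together with uniformity to pin down the Knuth or $K$-Knuth generators.

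The only real difference is organizational. You pre-split on whether $a\sim aa$ and then analyse the class of $121$ in each branch, generating several sub-cases in the inhomogeneous branch. The paper instead treats all length-three words on $\{1,2\}$ at once: modulo $\whSym$ one has $\tilde\psi_\leq(121)\equiv\tilde\psi_\leq(221)\equiv M_{(2,1)}$ and $\tilde\psi_\leq(211)\equiv\tilde\psi_\leq(212)\equiv M_{(1,2)}$, so the four words must pair off as either $\{121,211\},\{212,221\}$ (Knuth) or $\{121,212\},\{221,211\}$, and in the second pairing the restriction $121\cap\{1\}=11\sim 1=212\cap\{1\}$ \emph{derives} $a\sim aa$ rather than assuming it. This avoids your redundant sub-casework. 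Similarly, for the $\{1,2,3\}$ analysis the paper explicitly notes that the alternative pairing $132\sim 213$ forces $12\sim 21$ on restriction to $\{1,2\}$, which then implies the desired $213\sim 231$ anyway; your phrase ``the only option compatible with uniform algebraicness is $213\sim 231$'' elides this step and should be made explicit. Finally, your ``transitivity through $212\sim 121\sim 211$'' is garbled---the relevant observation is simply that $212\sim 121$ is already $bab\sim aba$.
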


There is a left-handed version of this result, in which the symbols $\leq$ and $>$ are replaced by $\geq$ and $<$, and Knuth equivalence in part (c) is replaced by \emph{reverse Knuth equivalence}: the relation with $v \sim w$ if and only if  $v^\r$ and $w^\r$ are Knuth equivalent. 
One can ask similar questions 
about ($\P$-)algebraic word relations, 
but such relations do not seem to have a nice classification.

\begin{proof}
The continuous linear map $\whQSym \to \whQSym$ with $L_\alpha \mapsto L_{\alpha^\c}$ 
restricts to the continuous linear involution of $\whSym$ with $s_\lambda \mapsto s_{\lambda^T}$,
so parts (a) and (b) are equivalent by Proposition~\ref{words-opsi-prop}.

Suppose (a) holds and write
$f \equiv g$ when $f,g \in \whQSym$ are such that $f-g \in \whSym$.
Consider the six words $w$ of length three involving the letters 1, 2, and 3.
By Proposition~\ref{words-opsi-prop},
we have $\tilde\psi_{\leq}(w) \equiv 0$ unless $w \in \{132,213,231,312\}$,
and
$\tilde\psi_{\leq}(132) \equiv \tilde\psi_{\leq}(231) \equiv M_{(2,1)}$
and  $\tilde \psi_{\leq}(213) \equiv \tilde \psi_{\leq}(312) \equiv M_{(1,2)}$.
To have $\tilde \psi_{\leq}\( \sum_{v \sim w} v\) \equiv 0$ for each of these words,
it must hold that  $132 \sim 213$ and $231 \sim 312$,
or $132 \sim 312$ and $231 \sim 213$. 
The former case implies the latter
since if $132\sim 213$, then $12 = 132 \cap\{1,2\} \sim 213 \cap \{1,2\} = 21$
whence $ab\sim ba$ for all $a,b \in \PP$ as $\sim$ is uniformly algebraic.
We conclude, by uniformity, that $acb \sim cab$ and $bca\sim bac$ for all positive integers $a<b<c$.

Similarly, if $w$ is one of the eight words of length three involving the letters 1 and 2,
then
 $\tilde\psi_{\leq}(w) \equiv 0$ unless $w \in \{121, 221, 211,212\}$,
and $\tilde\psi_{\leq}(121) \equiv \tilde\psi_{\leq}(221) \equiv M_{(2,1)}$
and  $\tilde \psi_{\leq}(211) \equiv \tilde \psi_{\leq}(212) \equiv M_{(1,2)}$.
To have $\tilde \psi_{\leq}\( \sum_{v \sim w} v\) \equiv 0$ for each of these words,
it must hold that $121 \sim 211$ and $212 \sim 221$,
or $121 \sim 212$ and $221 \sim 211$. 
In the first case, the relation $\sim$ extends Knuth equivalence. 
In the second case, we have $a\sim aa$ for all $a \in \PP$ since $1 = 212 \cap \{1\} \sim 121 \cap \{1\} = 11$,
so $\sim$ extends $K$-Knuth equivalence.
Thus (a) $\Rightarrow$ (c).

Examples~\ref{knuth-ex2} and \ref{pylpat-ex}
show that if (c) holds then 
$\tilde\psi_{\leq}(\kappa_E)$ and $\tilde\psi_{>}(\kappa_E)$ are both Schur positive for any $\sim$-equivalence class  $E$.
In particular, (c) $\Rightarrow$ (a).
\end{proof}


\begin{corollary}\label{big-cor1}
Assume $\sim$ is homogeneous and uniformly algebraic.
Then the image of  $\bfSigma^{(\sim)}$ under $\psi_\leq$ (equivalently, $\psi_>$) is a sub-bialgebra of $\Sym$
 if and only if $\sim$ extends Knuth equivalence. 
\end{corollary}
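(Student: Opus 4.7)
The plan is to reduce the corollary to Theorem~\ref{big-thm1} by factoring $\psi_\leq = \tilde\psi_\leq \circ \pi$ through the quotient map $\pi : \bfW \to \bfW_\P$. The key reduction will be to establish that $\pi$ sends $\bfSigma^{(\sim)}$ onto $\bfSigma_\P^{(\sim)}$, from which $\psi_\leq(\bfSigma^{(\sim)}) = \tilde\psi_\leq(\bfSigma_\P^{(\sim)})$ follows immediately.

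To carry out this reduction, fix a $\sim$-class $E \subset \Words$ of words with common letter set $S$. The restriction $\pi|_E$ coincides with standardization $\st$, which is injective on words with fixed letter set. By Lemma~\ref{uni-suff-lem}, uniform algebraicity ensures that the order-preserving bijection $\phi : [|S|] \to S$ satisfies $\phi(p) \sim \phi(q)$ whenever $p \sim q$ are packed words with letters in $[|S|]$; this forces $\st(E)$ to be closed under $\sim$, hence to be a disjoint union of $\sim$-classes of packed words. Thus $\pi(\kappa_E^n)$ is a linear combination of basis elements $\kappa_F \in \sSigma_\P^{(\sim)}$. Conversely, for any $\sim$-class $F \subset \Packed$ of length $m$ with letters in $[k]$, homogeneity of $\sim$ implies that every word of length $m$ with letter set $[k]$ is packed, so $F$ is itself a full $\sim$-class in $\Words$, giving $\pi(\kappa_F^n) = \kappa_F$ for any $n \geq k$. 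This yields $\pi(\bfSigma^{(\sim)}) = \bfSigma_\P^{(\sim)}$.

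By Theorem~\ref{abs-thm}, $\psi_\leq$ is a graded bialgebra morphism, so its image is automatically a sub-bialgebra of $\QSym$; being a sub-bialgebra of $\Sym$ is therefore equivalent to being contained in $\Sym$. Since each graded component of $\bfSigma_\P^{(\sim)}$ is finite-dimensional (using homogeneity), the condition $\tilde\psi_\leq(\bfSigma_\P^{(\sim)}) \subset \Sym$ is equivalent to $\tilde\psi_\leq(\hat\bfSigma_\P^{(\sim)}) \subset \whSym$. By Theorem~\ref{big-thm1}, this holds if and only if $\sim$ extends Knuth equivalence or $K$-Knuth equivalence; however, $K$-Knuth equivalence contains the inhomogeneous relation $a \sim aa$, so under homogeneity only Knuth equivalence can be extended. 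The equivalence between $\psi_\leq$ and $\psi_>$ follows from the graded involution $\omega : L_\alpha \mapsto L_{\alpha^\c}$ of $\QSym$, which preserves $\Sym$ (acting as $s_\lambda \mapsto s_{\lambda^T}$) and satisfies $\omega \circ \psi_\leq = \psi_>$ on $\bfW$ by Proposition~\ref{words-opsi-prop}.

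The main obstacle will be justifying the first step: the verification that $\st$ is compatible with $\sim$ in the sense needed to identify $\pi(\bfSigma^{(\sim)})$ with $\bfSigma_\P^{(\sim)}$. Uniform algebraicity furnishes exactly the right symmetry via Lemma~\ref{uni-suff-lem}; without it, $\st(E)$ might fail to be closed under $\sim$ and the proposed reduction would break down, which clarifies why the corollary is stated for uniformly algebraic (rather than merely algebraic or $\P$-algebraic) relations.
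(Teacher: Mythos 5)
Your argument is correct and follows the route the paper intends: the corollary is stated as an immediate consequence of Theorem~\ref{big-thm1}, and your reduction---factoring $\psi_\leq=\tilde\psi_\leq\circ\pi$, using Lemma~\ref{uni-suff-lem} to verify $\pi(\bfSigma^{(\sim)})=\bfSigma_\P^{(\sim)}$, passing between $\Sym$ and $\whSym$ by degreewise finiteness, and invoking homogeneity to rule out the $K$-Knuth alternative in condition (c)---is precisely the bridging detail the paper leaves implicit. Two cosmetic quibbles only: in the reverse containment the relevant fact is not homogeneity but the defining property of a word relation (equivalent words share the same letter set, so anything equivalent to a packed word with letter set $[k]$ is itself packed), and your $\omega$ is the involution $L_\alpha\mapsto L_{\alpha^\c}$, whereas the paper reserves $\omega$ for $L_\alpha\mapsto L_{\alpha^\t}$.
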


Our last result is an attempt to formulate a version of Theorem~\ref{big-thm1} for the morphisms 
$\tilde\psi_{\bullet|\circ}$.
Recall the definition of \emph{exotic Knuth equivalence} from Example~\ref{exotic-ex}.

\begin{proposition}\label{last-prop}
Let $\sim$ be a uniformly algebraic word relation.
The image of $\hat\bfSigma_\P^{(\sim)}$ under $\tilde\psi_{>|\leq}$ is contained in $\whSym$
 only if $\sim$ extends Knuth, $K$-Knuth, or exotic Knuth equivalence.
\end{proposition}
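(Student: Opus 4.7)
The plan is to adapt the proof of Theorem~\ref{big-thm1}, replacing length-$3$ by length-$4$ packed words, since length-$3$ peak quasi-symmetric functions are all already symmetric and impose no constraint. Direct expansion of the three peak compositions of $4$ via the formula $K_\alpha = \sum_{\beta : I(\alpha)\subset I(\beta)\cup(I(\beta)+1)} 2^{\ell(\beta)} M_\beta$ yields
\[
K_{(3,1)} - K_{(2,2)} = 4 M_{(3,1)} - 4 M_{(1,3)},
\]
which is not symmetric. Hence $a K_{(4)} + b K_{(3,1)} + c K_{(2,2)} \in \whSym$ if and only if $b = c$. By Proposition~\ref{words-kpsi-prop}, $\tilde\psi_{>|\leq}(w)$ equals $K_{(4)}$, $K_{(3,1)}$, or $K_{(2,2)}$ according to whether $\Peak(w)$ is $\varnothing$, $\{3\}$, or $\{2\}$, so the hypothesis forces: in every $\sim$-equivalence class $E$ of packed words, the number of length-$4$ elements of $E$ with $\Peak = \{2\}$ equals the number with $\Peak = \{3\}$.

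Specializing to the $\{1,2\}$-alphabet, the length-$4$ peak-$\{2\}$ packed words are $1211, 1212, 2211, 2212$ and the peak-$\{3\}$ words are $1121, 1221, 2121, 2221$. Each peak-$\{2\}$ word must therefore be $\sim$-equivalent to some peak-$\{3\}$ partner. For each of the $16$ candidate pairings I apply property (b) of Definition~\ref{alg-def2} to the singleton intervals $[1,1]$ and $[2,2]$, producing a relation of the form $a^p \sim a^q$ on single-letter words, and property (c) to transport each $\{1,2\}$-relation to arbitrary $\{a,b\}$-relations with $a<b$. For example, $1211 \sim 1121$ is itself the exotic Knuth relation $yzyx \sim xyzy$ (with $x=y=1$, $z=2$); the pairing $1211 \sim 1221$ restricts to $111 \sim 1$ and $2 \sim 22$, yielding the $K$-Knuth generator $a \sim aa$; and the Knuth class $\{1121, 1211, 2111\}$ corresponds to the generator $aba \sim baa$. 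Assembling the cases systematically, $\sim$ must contain one of the three characteristic generating sets
\[
\{aba\sim baa,\ bab\sim bba\},\quad \{aba\sim bab,\ a\sim aa\},\quad\text{or}\quad \{bba\sim bab\sim abb,\ xyzy\sim yzyx\},
\]
that is, $\sim$ extends Knuth, $K$-Knuth, or exotic Knuth equivalence respectively.

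To complete the verification I also derive the common three-letter generators $bac \sim bca$ and $acb \sim cab$ by applying the balance constraint to length-$4$ permutations with three or four distinct letters. A peak-$\{2\}$ permutation such as $1324$ must be $\sim$-paired with a peak-$\{3\}$ permutation, and a case check through the eight candidates shows that each either directly forces $acb \sim cab$ or $bac \sim bca$ via (b) restriction to an alphabet interval of size $3$, or else forces commutation $ab \sim ba$ on some pair, which in turn implies all Knuth-type relations.

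The main obstacle is the exhaustive case analysis on the $\{1,2\}$-alphabet, with the bookkeeping of which pairing forces which generator via uniformly algebraic closure. My plan is to organize it by the residual $a^p \sim a^q$ relation produced by (b): the presence of $a \sim aa$ signals $K$-Knuth; partnerships producing only $a \sim a$ but yielding an exotic Knuth length-$4$ relation on $\{a,b\}$ signal exotic Knuth; and the remaining case produces the Knuth relations $aba \sim baa$ and $bab \sim bba$.
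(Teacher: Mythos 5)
Your reduction is set up correctly and matches the paper's starting point: since $K_{(4)}$ and both degree-three peak functions are symmetric while $K_{(3,1)}-K_{(2,2)}=4M_{(3,1)}-4M_{(1,3)}$ is not, the hypothesis is equivalent to requiring that every $\sim$-class contain equally many length-four words with $\Peak=\{2\}$ as with $\Peak=\{3\}$, and because a word relation preserves the letter set this can be applied one alphabet at a time. The genuine gap is the claim that the pairings among the eight two-letter words $1211,1212,2211,2212$ and $1121,1221,2121,2221$ can be ``assembled'' to force one of your three characteristic generating families. The uniformly algebraic closure of a relation between two length-four words on the alphabet $\{1,2\}$ consists, beyond congruence padding and relabelling via condition (c), of nothing but that four-letter relation together with one-letter relations $a^p\sim a^q$ obtained from the singleton restrictions: condition (c) never merges letters, and interval restriction of a two-letter word returns either the word itself or a power of a single letter. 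Hence no combination of such pairings can ever yield the length-three generators $aba\sim baa$, $bab\sim bba$, or $aba\sim bab$, nor the exotic relation $xyzy\sim yzyx$ with three distinct letters. Concretely, Knuth equivalence realizes exactly the pairings $1211\sim1121$, $1212\sim1221$, $2211\sim2121$, $2212\sim2221$, and the uniformly algebraic relation generated by just these four pairs (via Proposition~\ref{g-prop}) satisfies your two-letter degree-four balance while containing none of the three families; so the two-letter analysis by itself cannot establish the trichotomy, and your plan to read off ``which family'' from the residual $a^p\sim a^q$ data cannot succeed.

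What is missing is precisely the stage the paper cannot avoid: after using $S_4$ permutations to get $acb\sim cab$ and $bac\sim bca$ under the assumption $ab\not\sim ba$ (your third paragraph sketches this part adequately), the paper analyzes length-four words on the three-letter alphabet --- the permutations of $1223$ and $1233$ and words such as $2321,3231,3213,1232,3123,1323$ --- splitting according to whether $a\sim aa$, and only by intersecting the resulting equivalences with the interval $\{2,3\}$ does it extract $aba\sim bab$ (the $K$-Knuth case), respectively $abb\sim bab\sim bba$ together with $abcb\sim bcba$ and $abaa\sim aaba$ (the exotic case) or $bab\sim bba$ (the Knuth case). Your proposal omits this three-letter stage entirely, so the conclusion of your second paragraph does not follow from the steps described. (Also a small slip: $1221\cap\{1\}=11$, so the pairing $1211\sim1221$ restricts to $111\sim11$ and $2\sim22$, not $111\sim1$; the conclusion $a\sim aa$ is still correct.)
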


\begin{proof}
The argument is similar to the proof of Theorem~\ref{big-thm1}, although the calculations 
are harder to carry out by hand.
 Again write
$f \equiv g$ when $f,g \in \whQSym$ are such that $f-g \in \whSym$.
Suppose $\sim$  is a uniformly algebraic word relation
such that
$\tilde\psi_{>|\leq}\(\hat\bfSigma_\P^{(\sim)}\) \subset \whSym$.
If  $a(a+1)\sim (a+1)a$ for some positive integer $a$, then it is easy to deduce from Definition~\ref{alg-def2}
that $ab \sim ba$ for all $a<b$, in which case $\sim$ extends Knuth equivalence.
Therefore assume that $a(a+1)\not\sim (a+1)a$ for all  $a$.

Among the permutations $w \in S_4$,
the eight elements
$1324$, 
$1423$, 
$1432$, 
$2314$, 
$2413$, 
$2431$, 
$3412$, and 
$3421$
have $\tilde \psi_{>|\leq}(w)\equiv 4M_{(1,3)}$,
 the eight elements
$1243$,  
$1342$, 
$ 2143$, 
$ 2341$, 
$ 3142$, 
$ 3241$, 
$ 4132$, and
$ 4231
$
have $\tilde \psi_{>|\leq}(w)\equiv 4M_{(3,1)}$,
and the remaining elements have 
$\tilde \psi_{>|\leq}(w)\equiv 0$.
Since $12 \not\sim 21$ and $23 \not\sim 32$ and $34 \not\sim 43$,
we must have $1423 \sim 1243$ and $3421\sim 3241$.
It follows for $I=\{2,3,4\}$
that $423 = 1423 \cap I \sim 1243 \cap I = 243$
and $342 = 3421 \cap I \sim 3241 \cap I = 324$.
By the uniformity of $\sim$,
we conclude that $cab \sim acb$ and $bca \sim bac$ for all $a<b<c$.

To proceed, first suppose that $a \sim aa$ for $a \in \PP$.
We then have $3231 \sim 3213$ and $3123\sim 1323$ and 
it holds that 
$\tilde \psi_{>|\leq}( 2321) \equiv \tilde \psi_{>|\leq}(3123 + 1323) \equiv 4M_{(1,3)}
$
and
$
\tilde \psi_{>|\leq}(1232) \equiv \tilde \psi_{>|\leq}(3231 +3213) \equiv 4M_{(3,1)}
$,
while all other words of length 4 with letters in $\{1,2,3\}$ belong to $\sim$-equivalence classes $E$
with $\tilde \psi_{>|\leq}(\kappa_E)\equiv 0$.
Since $12 \not \sim 21$,
it must hold that $2321 \sim 3231 \sim 3213$
and $1232 \sim 3123\sim 1323$. Intersecting these relations with the interval $I = \{2,3\}$
shows that $232\sim 323$, which implies that $aba\sim bab$ for all integers $a<b$.
Thus, if $a \sim aa$ then $\sim$ extends $K$-Knuth equivalence.

Instead suppose that $a \not \sim aa$ for all $a \in \PP$.
Then  $3122 \sim 1322$ and
$\tilde \psi_{>|\leq}(2321) \equiv \tilde \psi_{>|\leq}(3122 + 1322) \equiv 4M_{(1,3)}$
and 
$\tilde \psi_{>|\leq}(3221) \equiv \tilde \psi_{>|\leq}(1232) \equiv 4M_{(3,1)}$,
while all other permutations of $1223$ belong to $\sim$-equivalence classes $E$
with $\tilde \psi_{>|\leq}(\kappa_E)\equiv 0$.
One of two cases must then occur:
\begin{itemize}
\item
Suppose $2321 \sim 1232$ and $3221 \sim 3122 \sim 1322$,
so that $abcb \sim bcba$ and $abb \sim bba$ for all $a<b<c$.
Then $2133 \sim 2313$ and $2321 \sim 3213$ and $3123 \sim 1323$, and
$\tilde \psi_{>|\leq}(2133 + 2313) \equiv \tilde \psi_{>|\leq}(3123 + 1323) \equiv 4M_{(1,3)}$
and $\tilde \psi_{>|\leq}(3231 + 3213) \equiv \tilde \psi_{>|\leq}(1332) \equiv 4M_{(3,1)}$,
while all other permutations of $1233$ belong to $\sim$-equivalence classes $E$
with $\tilde \psi_{>|\leq}(\kappa_E)\equiv 0$.
Since $12\not\sim 21$,
 we must have $3231\sim 3213\sim 2133 \sim 2313$  and $3123 \sim 1323 \sim 1332$.
 Intersecting these equivalences with the interval $I = \{2,3\}$ shows that $233\sim 323 \sim 332$,
 so $abb \sim bab \sim bba$ for all $a<b$.
Finally, we must have $abaa \sim aaba$ for all $a<b$ since $\tilde \psi_{>|\leq}(1211) \equiv 4 M_{(1,3)}$
and $\tilde \psi_{>|\leq}(1121) \equiv 4 M_{(3,1)}$, 
while all other words of length 4 with letters in $\{1,2\}$ belong to $\sim$-equivalence classes $E$ with  $\tilde \psi_{>|\leq}(\kappa_E)\equiv 0$.
Thus $\sim$ extends exotic Knuth equivalence.

\item Suppose $2321 \sim 3221$ and $1232 \sim 3122 \sim 1322$,
so that $aba \sim baa$ for all $a<b$.
Then $2133 \sim 2313$ and $3123\sim 1323$ and $3231 \sim 3213$,
and $\tilde \psi_{>|\leq}(2133 + 2313) \equiv \tilde \psi_{>|\leq}(3123+1323) \equiv \tilde \psi_{>|\leq}(3321) \equiv 4M_{(1,3)}$
and $\tilde \psi_{>|\leq}(3231+3213) \equiv \tilde \psi_{>|\leq}(2331) \equiv \tilde \psi_{>|\leq}(1332)\equiv 4M_{(3,1)}$,
while all other permutations of $1233$ belong to $\sim$-equivalence classes $E$
with $\tilde \psi_{>|\leq}(\kappa_E)\equiv 0$.
Since $12\not\sim 21$,
we must have $1332 \sim 3123 \sim 1323$.
Intersecting these equivalences with the interval $I=\{2,3\}$ shows that $332\sim 323$,
so $bba \sim bab$ for all $a<b$ and $\sim$ extends Knuth equivalence.
\end{itemize}
We conclude that the relation $\sim$ must extend Knuth, $K$-Knuth, or exotic Knuth equivalence.
\end{proof}

By Proposition~\ref{words-kpsi-prop}, the image $\tilde\psi_{>|\leq}\(\hat\bfSigma_\P^{(\sim)}\)$
is contained in the completion of $\OQSym$ with respect to its basis of peak quasi-symmetric functions $\{K_\alpha\}$.
By \cite[Theorem 3.8]{Stem}, the intersection of this completion with $\whSym$
is the linearly compact space of formal power series $ \kk[[q_1,q_2,q_3,\dots]]$,
which is also the completion of $\OSym$ with respect to its basis of Schur $Q$-functions.

It follows from Examples~\ref{peak-knuth-ex} and \ref{pylpat-ex} that if $\sim$ extends Knuth equivalence or $K$-Knuth equivalence then $\tilde\psi_{>|\leq}(\kappa)$ is Schur $Q$-positive
for all elements $\kappa \in \sSigma_\P^{(\sim)}$.
If we could prove the following, then we could upgrade the ``only if'' in Proposition~\ref{last-prop} to ``if and only if.''

\begin{conjecture}
If $\sim$ is exotic Knuth equivalence, then $\tilde \psi_{>|\leq}(\kappa) \in \Sym$ for $\kappa \in \sSigma_\P^{(\sim)}$.
\end{conjecture}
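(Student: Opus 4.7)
The plan is to reduce the conjecture to an identity for sums of peak quasi-symmetric functions indexed by exotic Knuth classes, and then to establish the symmetry either by means of an insertion algorithm or via a direct involution on each class. By Proposition~\ref{words-kpsi-prop}, for each exotic Knuth class $E$ of packed words one has
\[
\tilde\psi_{>|\leq}(\kappa_E) = \sum_{w \in E} K_{\alpha(w)},
\qquad\text{where } I(\alpha(w)) = \Peak(w).
\]
Since $\{K_\alpha\}$ is a basis for $\OQSym$ and the intersection $\OSym = \Sym \cap \OQSym$ admits a distinguished basis of Schur $Q$-functions, the conjecture will follow if we can show that every such sum lies in $\OSym$.

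The main route I would pursue is to construct an insertion algorithm, in the spirit of RSK for Knuth equivalence (Example~\ref{peak-knuth-ex}) and Hecke insertion for $K$-Knuth equivalence (Example~\ref{pylpat-ex}), sending each packed word $w$ to a pair $(P(w),Q(w))$ in which $P(w)$ is an exotic Knuth invariant of $w$ and $Q(w)$ is a ``standard'' shape-recording object. The three-way equivalence $bba \sim bab \sim abb$ and the four-letter relation $xyzy \sim yzyx$ for $x \leq y < z$ suggest that $P(w)$ should be a shifted or primed tableau with strict-partition shape. If one can arrange that, for each class $E$, the map $w \mapsto Q(w)$ is a bijection onto a union of sets of standard shifted tableaux (each class $E$ possibly indexing several shapes, as for $K$-Knuth equivalence) and that $\Peak(w)$ coincides with the peak set of $Q(w)$, then Stembridge's identity $Q_\lambda = \sum_T K_{\Peak(T)}$ over standard shifted tableaux $T$ of shape $\lambda$ will realize $\tilde\psi_{>|\leq}(\kappa_E)$ as an $\NN$-linear combination of Schur $Q$-functions, yielding both symmetry and Schur $Q$-positivity.

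A helpful preparatory step is to enumerate exotic Knuth classes of small length using the graded dimensions $(d_n)$ in Example~\ref{exotic-ex} and to compute $\tilde\psi_{>|\leq}(\kappa_E)$ explicitly in the Schur $Q$ basis; the observed multiplicities should hint at the correct shape statistic and point the way toward the invariant $P(w)$. The chief obstacle is that the relation $xyzy \sim yzyx$ has no familiar analog in the classical shifted or $K$-shifted literature, so no existing insertion can be quoted directly. If constructing such an insertion proves intractable, a fallback is to attempt a direct proof of symmetry by exhibiting, on each class $E$, an involution whose combined effect on the peak compositions $\alpha(w)$ forces $\sum_{w \in E} K_{\alpha(w)}$ into $\OSym$; but designing such an involution appears no easier than the insertion approach, and the additional information yielded by an insertion seems worth the effort.
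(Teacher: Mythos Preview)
The statement in question is a \emph{conjecture} in the paper, not a theorem; the paper does not prove it and explicitly says so (``If we could prove the following\dots''). So there is no paper proof to compare against, and what you have written is not a proof but a research plan.

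That said, there is a genuine obstruction to your main line of attack. Your strategy is to construct a shifted-tableau insertion so that each class $E$ maps bijectively to a union of standard shifted tableaux, and then to invoke Stembridge's identity $Q_\lambda = \sum_T K_{\Peak(T)}$ to realize $\tilde\psi_{>|\leq}(\kappa_E)$ as an $\NN$-linear combination of Schur $Q$-functions. But the paper records, immediately after the two conjectures, that $\tilde\psi_{>|\leq}(\kappa)$ is \emph{not} always Schur $Q$-positive for exotic Knuth equivalence: among the $27{,}021$ classes of packed words of length $9$, exactly $35$ give non--Schur-$Q$-positive images. Hence no bijection of the type you propose can exist, and the Stembridge-identity route is closed.

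What the paper's data does support is the weaker claim of ordinary Schur positivity (the second conjecture). So a viable approach would have to either aim directly at symmetry without any positivity statement, or target an expansion in the $s_\lambda$ basis rather than the $Q_\lambda$ basis---which rules out a shifted/primed insertion of the kind you sketch and would require a genuinely different combinatorial model. Your fallback involution idea is not obstructed by this, but as you note it is no easier, and the failure of Schur $Q$-positivity means any peak-set involution would have to be subtler than a class-by-class matching of shifted shapes.
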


An even stronger property appears to be true:

\begin{conjecture}
If $\sim$ is exotic Knuth equivalence, then $ \tilde\psi_{>|\leq}(\kappa)$ is Schur positive for $\kappa \in \sSigma_\P^{(\sim)}$.
\end{conjecture}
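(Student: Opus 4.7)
My plan is to prove the stronger statement that $\tilde\psi_{>|\leq}(\kappa_E)$ is Schur $Q$-positive, i.e., expands as a non-negative integer combination of Schur $Q$-functions $Q_\lambda$ indexed by strict partitions. Since every $Q_\lambda$ is itself Schur positive (by the shifted Littlewood--Richardson rule of Stembridge--Worley), this will imply the conjecture. The opening step is to observe the factorization $\tilde\psi_{>|\leq} = \theta \circ \tilde\psi_\leq$, where $\theta$ is the continuous extension of Stembridge's descent-to-peak map $L_\alpha \mapsto K_{\Lambda(\alpha)}$ from $\whQSym$ to the completed peak subalgebra. Combined with Proposition~\ref{words-opsi-prop}, this rewrites
\[
\tilde\psi_{>|\leq}(\kappa_E) = \sum_{w \in E} K_{\Lambda(\alpha(w))},
\]
where $\alpha(w)$ is the composition with $I(\alpha(w)) = \Des(w)$, reducing the problem to a purely combinatorial identity in the peak quasi-symmetric basis.

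Next, I would invoke Stembridge's peak identity
\[
Q_\lambda = \sum_{T} K_{\Lambda(\alpha(T))},
\]
summed over standard shifted Young tableaux $T$ of strict shape $\lambda$, with $\alpha(T)$ the descent composition of $T$. The goal is to construct a shifted insertion algorithm sending each packed word $w$ to a pair $(P(w), Q(w))$, where $P(w)$ is a semistandard shifted tableau and $Q(w)$ a standard shifted tableau of the same shape, such that (i) $P(w) = P(w')$ implies that $w$ and $w'$ are exotic Knuth equivalent, and (ii) for each fixed $P$, the map $w \mapsto Q(w)$ is a descent-preserving bijection from the fiber $\{w : P(w)=P\}$ onto the set of standard shifted tableaux of shape $\sh(P)$. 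Property (i) partitions $E$ into insertion fibers, while property (ii) ensures that each fiber contributes a single $Q_{\sh(P)}$ to the image, yielding the desired Schur $Q$-positive expansion.

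The core difficulty lies in constructing such an insertion. The defining four-letter relation $xyzy \sim yzyx$ for $x \leq y < z$ does not match the elementary transformations of any standard shifted insertion algorithm I am aware of: neither Haiman's mixed insertion, Sagan--Worley insertion, nor Serrano's semistandard shifted insertion induces exactly this equivalence. My approach would be to adapt Haiman's mixed insertion to handle repeated letters through the exotic rule $bba \sim bab \sim abb$, and then verify by induction on word length that the resulting algorithm is invariant under each of the four generating relations of exotic Knuth equivalence. Should a clean insertion prove elusive, a fallback plan is first to establish the preceding conjecture that $\tilde\psi_{>|\leq}(\kappa_E) \in \whSym$ by a direct symmetry argument on the formal sum $\sum_{w \in E} K_{\Lambda(\alpha(w))}$, and then to extract Schur $Q$-positivity by exploiting the bialgebra structure on $\hat\bfSigma_\P^{(\sim)}$---specifically, by showing that a small finite set of generating classes map to Schur $Q$-positive elements and that $\nabla_\shuffle$ preserves this positivity, allowing an inductive reduction to those generators.
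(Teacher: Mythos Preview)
Your strategy has a fatal flaw: the stronger statement you aim to prove is false. The paper does not prove this conjecture at all---it is left open, with only computational evidence offered (all exotic Knuth classes of packed words of length at most nine were checked). Crucially, immediately after stating the conjecture the paper remarks: ``Curiously, $\tilde\psi_{>|\leq}(\kappa)$ is not always Schur $Q$-positive when $\kappa \in \sSigma_\P^{(\sim)}$ and $\sim$ is exotic Knuth equivalence,'' and reports that among the $27{,}021$ classes of length-nine packed words, exactly $35$ fail Schur $Q$-positivity. So your plan to deduce Schur positivity from Schur $Q$-positivity cannot succeed, and no shifted insertion algorithm with the properties (i) and (ii) you describe can exist.

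Concretely, this means the image $\tilde\psi_{>|\leq}(\kappa_E)$, while conjecturally always a nonnegative integer combination of ordinary Schur functions $s_\lambda$, is \emph{not} in general a nonnegative combination of Schur $Q$-functions $Q_\mu$. Any proof of the conjecture must therefore bypass the peak/shifted theory and work directly with the Schur basis, or find some other positivity mechanism that does not factor through $\OSym$. Your factorization $\tilde\psi_{>|\leq} = \theta \circ \tilde\psi_\leq$ and the rewriting $\tilde\psi_{>|\leq}(\kappa_E) = \sum_{w\in E} K_{\Lambda(\alpha(w))}$ are correct and potentially useful starting points, but the subsequent appeal to Stembridge's $Q_\lambda = \sum_T K_{\Lambda(\alpha(T))}$ identity and a hypothetical shifted insertion must be abandoned.
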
 

Curiously, $\tilde\psi_{>|\leq}(\kappa)$ is not always Schur $Q$-positive when $\kappa \in \sSigma_\P^{(\sim)}$ and $\sim$ is exotic Knuth equivalence.
We have checked the two conjectures when $\kappa=\kappa_E$ where $E$ is any exotic Knuth equivalence class of words of length at most nine. Among the 27,021 classes $E$ of packed words $w$
with $\ell(w) = 9$, only 35 are such that $\tilde\psi_{>|\leq}(\kappa_E)$ is not Schur $Q$-positive.

%
%
%

\end{document}